\DeclareMathOperator{\std}{std}
\DeclareMathOperator{\tb}{tb}
\DeclareMathOperator{\vot}{vot}
\appto{\bibsetup}{\raggedright}
\tikzset{%
    arrowat/.style={%
        postaction={decorate,decoration={
                markings,
                mark=at position #1 with {\arrow[xshift=2pt]{>}}}}
    } 
}
\tikzset{%
    oparrowat/.style={%
        postaction={decorate,decoration={
                markings,
                mark=at position #1 with {\arrow[xshift=2pt]{<}}}}
    }
}
\begin{document}
\title{Some applications of Menke's JSJ decomposition for symplectic fillings}
\author{Austin Christian and Youlin Li}
\begin{abstract}
We apply Menke's JSJ decomposition for symplectic fillings to several families of contact 3-manifolds.  Among other results, we complete the classification up to orientation-preserving diffeomorphism of strong symplectic fillings of lens spaces.  We show that exact symplectic fillings of contact manifolds obtained by surgery on certain Legendrian negative cables are the result of attaching a Weinstein 2-handle to an exact filling of a lens space.  For large families of contact structures on Seifert fibered spaces over $S^2$, we reduce the problem of classifying exact symplectic fillings to the same problem for universally tight or canonical contact structures.  Finally, virtually overtwisted circle bundles over surfaces with genus greater than one and negative twisting number are seen to have unique exact fillings.
\end{abstract}
\maketitle

\section{Introduction and statement of results}\label{sec:intro}
When studying symplectic fillings of contact manifolds, one often wonders whether decompositions which exist for the contact manifold extend to its fillings.  For instance, Eliashberg proved the following result.

\begin{theorem}[{\cite{eliashberg1990filling,cieliebak2012stein}}]\label{thm:connected-sum}
Suppose that a 3-dimensional contact manifold $(M,\xi)$ is obtained from another contact manifold $(M',\xi')$ via connected sum.  Then every symplectic filling of $(M,\xi)$ is obtained by attaching a Weinstein 1-handle to a symplectic filling of $(M',\xi')$.
\end{theorem}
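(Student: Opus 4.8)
The plan is to prove the non-trivial implication: \emph{every} symplectic filling of $(M,\xi)$ arises by attaching a Weinstein $1$-handle to a filling of $(M',\xi')$. (The converse, that such a handle attachment produces a filling of $(M,\xi)$, is a routine handle computation.) So let $(W,\omega)$ be a symplectic filling of $(M,\xi)$, where $M$ is obtained from $M'$ by gluing the boundary spheres of two Darboux balls along a separating $2$-sphere $S\subset M$; after an isotopy we may take $S$ to carry the standard tight characteristic foliation, with exactly two elliptic singularities $p_\pm$ and all other leaves circles. Fix an almost complex structure $J$ on $W$ compatible with $\omega$ and adjusted near $\partial W$ so that $\partial W$ is $J$-convex and $\xi$ is its field of complex tangencies.

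First I would run Eliashberg's technique of \emph{filling by holomorphic discs}. Near each elliptic point $p_\pm$, Bishop's construction produces a germ of a $1$-parameter family of small embedded $J$-holomorphic discs in $W$ with boundary on $S$, the boundaries sweeping out the nearby leaves of the characteristic foliation. The crux is to show this family extends to a \emph{compact} moduli space $\mathcal{M}$: one must rule out bubbling, boundary degeneration, and escape of discs, using Gromov compactness for discs with totally real boundary on $S$, positivity of intersections in dimension four, and the fact that $S$ bounds a standard contact ball on each side (so no overtwisted disc can obstruct compactness). This is exactly where the hypothesis on the characteristic foliation of $S$ enters, and I expect it to be the main obstacle. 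Granting compactness, $\mathcal{M}\cong[0,1]$, with endpoints the constant discs at $p_\pm$; the discs are pairwise disjoint and embedded, their boundaries foliate $S\setminus\{p_\pm\}$, and the union $\mathcal{B}$ of the closed discs is an embedded $3$-ball with $\partial\mathcal{B}=S$ and interior contained in the interior of $W$. Moreover the holomorphic foliation furnishes a neighborhood of $\mathcal{B}$ in $(W,\omega)$ with coordinates identifying it with a fixed model: a neighborhood of the belt region of a standard Weinstein $1$-handle, equivalently the region of standard symplectic $4$-space cut out by two transverse hyperplanes, matching $\xi$ along $S$ with the standard contact germ.

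It then remains to cut and reglue. Removing the interior of this standard neighborhood of $\mathcal{B}$ from $W$ and rounding corners produces a compact symplectic manifold $(W',\omega')$; since the cut locus is standard, the induced germ along the new part of the boundary is convex, and $\partial W'$ is obtained from $M$ by replacing a collar of $S$ with two standard contact balls, hence is contactomorphic to $(M',\xi')$. Thus $(W',\omega')$ is a symplectic filling of $(M',\xi')$. Reversing the operation, $(W,\omega)$ is recovered from $(W',\omega')$ by regluing the two standard contact balls, which is precisely the attachment of a Weinstein $1$-handle along the two belt spheres (in the self-connected-sum case the two feet lie in the same component, so $W'$ is connected, but the handle attachment is identical). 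Once $\mathcal{B}$ and its standard neighborhood are in hand the remaining cut-and-reglue bookkeeping is formal, so the entire difficulty is concentrated in the compactness and embeddedness of the disc moduli space $\mathcal{M}$.
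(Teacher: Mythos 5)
This is Theorem \ref{thm:connected-sum}, which the paper does not prove at all: it is quoted from Eliashberg and from Cieliebak--Eliashberg as background, so there is no in-paper proof to compare against. Measured against the actual proof in those references, your outline is the right one --- it \emph{is} the filling-by-holomorphic-discs argument: put the separating sphere $S$ in standard form with two elliptic points, grow the Bishop family of $J$-holomorphic discs with boundary on $S$, show the family sweeps out an embedded $3$-ball $\mathcal{B}$ with a standard symplectic neighborhood, and then cut along $\mathcal{B}$ and recognize the regluing as a Weinstein $1$-handle attachment. Your description of the endgame (standard neighborhood, cut, round corners, identify the new boundary with $(M',\xi')$, reglue as a $1$-handle, including the self-connected-sum case) is accurate and genuinely is the formal part.

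The gap is the one you yourself flag: the compactness and embeddedness of the moduli space $\mathcal{M}$ is asserted, not proved, and that is where essentially all of the content of the theorem lives. ``Gromov compactness plus positivity of intersections plus $S$ bounds a tight ball'' names the correct toolbox but does not constitute an argument; in particular you would still need (i) the uniform area bound on the discs $D_t$, which comes from the continuity of the family and Stokes applied to the annular region of $S$ swept out by the boundaries $\partial D_t$, not from the tightness of the complementary balls; (ii) an argument ruling out sphere bubbles even when the filling is not minimal --- a one-parameter family of discs can in principle meet an exceptional sphere, and the references dispose of this either by blowing down first or by a positivity-of-intersections argument pitting the putative bubble against the disc foliation; and (iii) automatic transversality or an equivalent device to guarantee the family really is a one-manifold with the two constant discs as its only ends. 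None of these is routine, so as written your proposal is a correct and well-organized roadmap of the known proof rather than a proof. Since the theorem is imported into this paper as a black box, that is a reasonable level of detail for the present context, but you should not represent the compactness step as done.
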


So the symplectic fillings of a contact manifold obtained by connected sum are determined by the fillings of the parties to the connected sum.  Thus, one may attempt to classify the symplectic fillings of a contact manifold $(M,\xi)$ by identifying an embedded sphere along which $(M,\xi)$ decomposes as a connected sum, and then classifying the symplectic fillings of the contact manifolds resulting from this decomposition.  Recently, Menke established a result analogous to that of Eliashberg, decomposing a contact manifold along a torus rather than a sphere; Menke calls this result a \emph{JSJ decomposition} for symplectic fillings, in reference to work of Jaco-Shalen \cite{jaco1978new} and Johannson \cite{johannson1979homotopy}.\\

While Eliashberg's connected sum result allows us to split a contact 3-manifold along any convex sphere, the tori along which Menke's result may be applied are required to satisfy an additional geometric criterion.  A \emph{mixed torus} is an embedded convex torus $T\subset (M,\xi)$ admitting a virtually overtwisted neighborhood of the form $T^2\times[0,2]$, where $T$ is identified with $T^2\times\{1\}$ and each of $T^2\times[0,1]$ and $T^2\times[1,2]$ is a basic slice.  One can then define the notion of \emph{splitting $(M,\xi)$ along $T$} as follows.  Let $s_i$ denote the slope of $T^2\times\{i\}$.  The identification of $T^2$ with $\mathbb{R}^2/\mathbb{Z}^2$ may be normalized so that $s_0=-1$ and $s_1=\infty$.  With this normalization, splitting $(M,\xi)$ with slope $s$ along $T$ will produce a contact manifold $(M',\xi')$.  Here
\[
M' := S_0 \cup_{\psi_0}(M\setminus T)\cup_{\psi_1} S_1,
\]
where each $S_i$ is a solid torus and $\psi_i\colon\partial S_i\to T_i$ is chosen so that the image of a meridian in $\partial S_i$ has slope $s$ in $T_i$.  Notice that the dividing set is vertical, and thus $s$ must be an integer.  We define $\xi'$ to agree with $\xi$ on $M\setminus T$, and on $S_i\subset M'$, $\xi'$ is the unique tight contact structure determined by the characteristic foliation of $\partial S_i$.\\

Finally, where Theorem~\ref{thm:connected-sum} constructs fillings of a contact manifold $(M,\xi)$ by attaching Weinstein 1-handles to fillings of a decomposed contact manifold $(M',\xi')$, the JSJ decomposition for symplectic fillings attaches \emph{round symplectic 1-handles} to fillings.  Round symplectic 1-handle attachment is described in \cite{adachi2017round} and \cite{avdek2021liouville}, and is equivalent to Weinstein 1-handle attachment followed by Weinstein 2-handle attachment.  In particular, attaching a round symplectic 1-handle to a symplectic filling $(W,\omega)$ along Legendrian knots $L_0,L_1$ in its boundary is equivalent to attaching a Weinstein 1-handle to $(W,\omega)$ along points $p_i\in L_i$, $i=0,1$, and then attaching a Weinstein 2-handle to the resulting filling along the knot $L$ obtained by surgering $L_0$ and $L_1$ along $p_0$ and $p_1$.  See \cite[Section 4.2]{avdek2021liouville} or \cite[Section 4]{christian2021symplectic} for further details.\\

At last, we may state Menke's JSJ decomposition for symplectic fillings.

\begin{theorem}[{\cite[Theorem 1.1]{menke2018jsj}}]\label{thm:jsj}
Let $(M,\xi)$ be a closed, cooriented 3-dimensional contact manifold, and let $(W,\omega)$ be an exact symplectic filling of $(M,\xi)$.  If there exists a mixed torus $T^2\subset(M,\xi)$, with normalized embedding $T^2\times[0,2]$, then there exists a (possibly disconnected) symplectic manifold $(W',\omega')$ such that:
\begin{itemize}
	\item $(W',\omega')$ is an exact filling of its boundary $(M',\xi')$;
	\item $(M',\xi')$ is the result of splitting $(M,\xi)$ with some slope $0\leq s\leq s_2-1$ along $T$;
	\item $(W,\omega)$ can be recovered from $(W',\omega')$ by round symplectic 1-handle attachment.
\end{itemize}
\end{theorem}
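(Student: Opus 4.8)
The statement is the one recalled from Menke's paper; what follows is a forward-looking sketch of how I would prove it, in close analogy with the proof of Theorem~\ref{thm:connected-sum}. There, one pushes the separating sphere into the filling, fills it by a family of holomorphic spheres (or a Bishop family of disks), and reads off that $W$ is a Weinstein $1$-handle glued to a smaller filling. The plan here is the torus analogue: make $\partial W$ standard near the mixed torus, stretch the neck along $T$, and use SFT compactness together with a finite-energy-foliation argument to expose the decomposition. The organizing principle is that a round symplectic $1$-handle attachment is, as a cobordism, the operation of deleting a tight $T^2\times I$ slice from the concave side of a filling and regluing it ``the other way'' after a basic-slice shuffle; so the theorem asserts that an arbitrary filling already contains such a cobordism wedged between $\partial W$ and an interior pre-Lagrangian torus, and the task is to produce it.

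First I would normalize. Using the normalized embedding $T^2\times[0,2]$ and Giroux--Honda convex surface theory, arrange that $T=T^2\times\{1\}$ is convex with two dividing curves and that $T^2\times[0,1]$ and $T^2\times[1,2]$ are basic slices with the prescribed slopes and signs; then push $T$ slightly into $W$ to a pre-Lagrangian torus $\widehat{T}$ of rational slope. After a $C^\infty$-small perturbation, $\widehat{T}$ carries a Morse--Bott stable Hamiltonian structure whose closed Reeb orbits form a single $S^1$-family, or a canceling pair $\gamma_\pm$. Next I would choose a compatible almost complex structure $J$ on $W$, cylindrical near $\partial W$ and adapted to $\widehat{T}$, and perform neck stretching along $\widehat{T}$. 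The presence of the inner basic slice should force a nonconstant punctured $J$-holomorphic curve crossing the neck, with positive ends asymptotic to orbits on $\widehat{T}$; I would extract such a curve from the SFT compactness limit, in the spirit of the Hofer--Wysocki--Zehnder and Wendl finite-energy-foliation technology used to obstruct fillings of contact manifolds with Giroux torsion. Finally, using this curve and its translates (equivalently, the leaves of the resulting foliation of the neck) I would show that the part of $W$ lying on the $T^2\times[0,2]$ side of $\widehat{T}$ is precisely a round symplectic $1$-handle cobordism, while the complementary region is a strong (exact) filling $(W',\omega')$ of the contact manifold $(M',\xi')$ obtained by cutting $(M,\xi)$ along $T$ and regluing. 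Honda's basic-slice shuffling bookkeeping then pins down which slope $s$ in the range $0\le s\le s_2-1$ occurs and certifies that $(M',\xi')$ is tight and that $(W',\omega')$ is an honest filling of it.

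I expect the main obstacle to be the holomorphic curve analysis along the mixed torus. Because the Reeb dynamics of a pre-Lagrangian torus is degenerate and very far from generic, one must carefully control the SFT limit — the number of levels, the asymptotic orbits, the indices, and the embeddedness of the curves that appear — and then show not merely that the relevant curve exists but that these curves assemble into a finite energy foliation of the neck region which globalizes to the asserted product-type decomposition of $W$. A secondary, purely contact-topological difficulty is to match the combinatorics of basic slices and their signs on the two sides of $T$ with the attaching data of the round handle, so as to identify exactly which regluing slope is realized and to guarantee fillability of $(M',\xi')$; this is where the hypotheses packaged into the definition of a mixed torus and its normalized embedding must be used in full.
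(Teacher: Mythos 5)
This statement is not proved in the paper at all: it is Theorem~1.1 of Menke's paper \cite{menke2018jsj}, quoted verbatim and used as a black box throughout, so there is no in-paper argument to measure your sketch against. What you have written is a proposal for reproving Menke's theorem, and it should be judged on those terms.

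Your sketch points in the right general direction --- Menke's argument is indeed a holomorphic-curve argument localized near the mixed torus, in the spirit of Eliashberg's filling-by-discs proof of Theorem~\ref{thm:connected-sum} --- but it diverges from the actual proof in a way that matters, and it leaves the two decisive steps unargued. Menke does not perform SFT neck stretching along a single pre-Lagrangian perturbation of $T$; instead he works with a \emph{pair} of pre-Lagrangian tori inside the normalized neighborhood $T^2\times[0,2]$, pushes them into a collar of the filling as Lagrangian boundary conditions, and studies a moduli space of holomorphic \emph{annuli} with boundary on these tori. Existence is supplied by an explicit local Bishop-type family of annuli near the boundary, and the mixed-torus hypothesis (the sign change between the two basic slices) is exactly what traps the family and prevents the annuli from escaping or degenerating as they propagate into $W$; the region they sweep out is identified with the round $1$-handle, and the possible regluing slopes $0\leq s\leq s_2-1$ are read off from how the family can break. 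In your version, the sentence ``the presence of the inner basic slice should force a nonconstant punctured $J$-holomorphic curve crossing the neck'' is precisely the theorem's content and is asserted rather than derived: SFT compactness alone gives you no nonconstant curve to take a limit of, and the Morse--Bott degeneracy of the pre-Lagrangian torus makes the index and compactness bookkeeping for punctured curves genuinely harder than for annuli with Lagrangian boundary. Likewise, the passage from ``a foliation of the neck exists'' to ``the swept region is a round symplectic $1$-handle and its complement is a strong (exact) filling of the split manifold'' is where all the work lives, and your sketch defers it entirely. As a roadmap your proposal is reasonable; as a proof it is missing its two central steps.
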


If $(M',\xi')$ is a contact manifold obtained from $(M,\xi)$ via Legendrian surgery along a Legendrian knot $L\subset(M,\xi)$ which has been stabilized both positively and negatively, a remarkable application of Menke's JSJ decomposition shows that the symplectic fillings of $(M',\xi')$ correspond to those of $(M,\xi)$.

\begin{theorem}[{\cite[Theorem 1.3]{menke2018jsj}}]\label{thm:menke-knot}
Let $L\subset(M,\xi)$ be a Legendrian knot in a contact 3-manifold, and let $(M',\xi')$ be the result of contact surgery on $(M,\xi)$ along $S_+S_-(L)$.  Then every exact symplectic filling of $(M',\xi')$ may be obtained from an exact symplectic filling of $(M,\xi)$ by attaching a Weinstein 2-handle along $S_+S_-(L)$.
\end{theorem}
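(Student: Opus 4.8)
The plan is to exhibit a mixed torus $T$ in $(M',\xi')$ whose JSJ decomposition (Theorem~\ref{thm:jsj}) splits $(M',\xi')$ into $(M,\xi)$ together with a standard piece, and then to recognize the resulting round symplectic $1$-handle attachment as a Weinstein $2$-handle attachment along $S_+(S_-(L))$. To find $T$, let $K\subset(M',\xi')$ be the core of the surgery torus dual to $L':=S_+(S_-(L))$, so that $(M',\xi')\setminus\nu(K)$ is contactomorphic to $(M,\xi)\setminus\nu(L')$. Because $L$ has been stabilized both positively and negatively, the thickened torus $\nu(L)\setminus\nu(L')$ inside a standard neighborhood $\nu(L)$ of $L$ decomposes into two basic slices -- one recording the positive stabilization, one the negative -- whose signs are \emph{opposite}. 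The surgery is supported away from this region, so in $(M',\xi')$ we obtain nested solid tori $\nu(K)=V_0\subset V_1\subset V_2$ with $V_2\setminus V_0=T^2\times[0,2]$, with $T^2\times[0,1]$ and $T^2\times[1,2]$ basic slices of opposite sign, hence with $T^2\times[0,2]$ virtually overtwisted; thus $T:=\partial V_1$ is a mixed torus. Along the way I would record the slopes $s_0,s_1,s_2$ in the normalized framing, pinning down the admissible range $0\le s\le s_2-1$.

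Next I would identify the contact manifold obtained by splitting $(M',\xi')$ along $T$. Since $T$ bounds the solid torus $V_1$ it is separating, so the split manifold is a disjoint union: one component is $V_1$ with a solid torus reglued at meridian slope $s$, and the other is $M'\setminus V_1$ with a solid torus reglued the same way. The crux is to show there is a value $s=s^{\ast}$ in the admissible range for which the second component is exactly $(M,\xi)$ -- the slope that simultaneously undoes the surgery and the enlargement of $\nu(K)$ -- while the first component is the standard contact sphere $(S^3,\xi_{\mathrm{std}})$. Feeding a strong (exact) filling $(W,\omega)$ of $(M',\xi')$ into Theorem~\ref{thm:jsj} with this torus then produces a strong (exact) filling $(W',\omega')$ of $(M,\xi)\sqcup(S^3,\xi_{\mathrm{std}})$ from which $(W,\omega)$ is recovered by one round symplectic $1$-handle attachment. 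Since $(S^3,\xi_{\mathrm{std}})$ has $D^4$ as its unique minimal exact filling -- in the strong case the $S^3$-piece may be a blow-up of $B^4$, which only replaces what follows by blow-ups and is harmless since blow-ups of fillings are fillings -- we may write $(W',\omega')=(W_0,\omega_0)\sqcup(D^4,\omega_{\mathrm{std}})$ with $(W_0,\omega_0)$ a strong (exact) filling of $(M,\xi)$.

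Finally I would convert the round handle into an ordinary one: a round symplectic $1$-handle decomposes in the standard way as a Weinstein $1$-handle together with a Weinstein $2$-handle running geometrically once over it. Its two feet lie in the two different components of $\partial W'$, so attaching the $1$-handle forms the boundary connected sum $W_0\natural D^4=W_0$, with boundary $M\#S^3=M$; what remains is a Weinstein $2$-handle attached along a Legendrian knot in $(M,\xi)$, and unwinding the construction of $T$ together with the slope normalization should identify this knot as $S_+(S_-(L))$ with its contact framing, giving $(W,\omega)=(W_0,\omega_0)\cup(\text{Weinstein $2$-handle along }S_+(S_-(L)))$. I expect the main obstacle to be the two geometric identifications used in the middle step and at the end: verifying that for the slope $s^{\ast}$ the two pieces of the split manifold really are $(M,\xi)$ and $(S^3,\xi_{\mathrm{std}})$ with their tight contact structures, and that the framing bookkeeping returns the contact-surgery handle along $S_+(S_-(L))$ rather than along a stabilization or a cable of it. By contrast, the opposite-sign basic-slice computation that makes $T$ mixed, and the decomposition of a round handle into a $1$-handle and a $2$-handle, should be routine once the conventions are fixed.
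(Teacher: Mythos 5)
This statement is quoted from Menke (\cite[Theorem 1.3]{menke2018jsj}); the paper you were given does not prove it, so the only in-paper material to compare against is the closely parallel argument in the proof of Theorem \ref{thm:cables}. Your sketch is essentially Menke's own derivation of Theorem 1.3 from Theorem 1.1: the mixed torus $\partial\nu(S_-(L))$ sandwiched between the two opposite-sign basic slices coming from the two stabilizations, the splitting into $(M,\xi)\sqcup(S^3,\xi_{\std})$, the decomposition of the round $1$-handle into a Weinstein $1$-handle (which cancels against the $S^3$-filling) plus a Weinstein $2$-handle, and the blow-up caveat in the strong case are all the right ingredients and are handled correctly in outline.

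There is one place where the logic as written would fail. Theorem \ref{thm:jsj} is \emph{existential} in the splitting slope: it asserts that the filling decomposes along $T$ at \emph{some} slope $0\le s\le s_2-1$, not at a slope of your choosing. So ``show there is a value $s=s^{\ast}$ in the admissible range for which the second component is exactly $(M,\xi)$'' does not yet let you conclude that the theorem hands you a filling of $(M,\xi)\sqcup(S^3,\xi_{\std})$ --- if the range contained other slopes, the decomposition might occur at one of those instead. What saves the argument, and what you must actually verify, is that here the admissible range is the singleton $\{0\}$: each stabilization drops $\tb$ by exactly one, so the three dividing-curve slopes $1/\tb(L)$, $1/(\tb(L)-1)$, $1/(\tb(L)-2)$ of $\partial\nu(L)$, $\partial\nu(S_-(L))$, $\partial\nu(S_+S_-(L))$ are consecutive Farey neighbors with the outer pair at distance $2$; normalizing the middle one to $\infty$ and the inner one to $-1$ forces $s_2=1$, hence $s=0$ is the only option, and slope $0$ is precisely the meridian of $\nu(S_-(L))$, which recovers $(M,\xi)$ on one side and $(S^3,\xi_{\std})$ on the other. (This is exactly the phenomenon that makes Theorem \ref{thm:lens-space-fillings} in this paper require a case analysis over $m+2$ slopes when the oppositely stabilized knots are separated by $m$ unstabilized ones: there the block is longer, $s_2=m+2$, and the range genuinely contains several slopes.) The remaining bookkeeping you defer --- identifying the attaching knot of the residual $2$-handle as $S_+(S_-(L))$ with its contact framing --- is indeed the fiddly part, but it is the same computation carried out explicitly in the proof of Theorem \ref{thm:cables} (the core of the reglued solid torus in the $M$-component is $S_-(L)$, and the connected sum with the core in the $S^3$-component together with the framing shift reproduces $S_+(S_-(L))$), so your plan there is sound.
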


The purpose of this note is to observe some consequences of Theorems \ref{thm:jsj} and \ref{thm:menke-knot} for the classification of symplectic fillings of virtually overtwisted lens spaces, spaces resulting from surgeries on Legendrian negative cables, certain tight contact structures on Seifert fibered spaces, and virtually overtwisted circle bundles.

\subsection{Lens spaces}
Our first application of Menke's result is to virtually overtwisted lens spaces.  Namely, we prove the following result.

\begin{theorem}\label{thm:lens-space-fillings}
Let $\xi$ be a virtually overtwisted tight contact structure on the lens space $L(p,q)$, with $p>q>0$ and $(p,q)=1$.  Then every strong (respectively, exact) symplectic filling of $(L(p,q),\xi)$ is obtained by attaching a sequence of Weinstein 2-handles to a strong (respectively, exact) symplectic filling of a connected sum of universally tight lens spaces.
\end{theorem}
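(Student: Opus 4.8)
The plan is to induct on the number $N$ of basic slices appearing in Honda's decomposition of $(L(p,q),\xi)$ --- a quantity depending only on $(p,q)$, namely the length of the continued fraction $-p/q=[a_0,a_1,\dots,a_n]$ with all $a_i\le-2$. If $\xi$ is universally tight (in particular if $N=0$), then $(L(p,q),\xi)$ is already a connected sum of universally tight lens spaces, with a single summand, and there is nothing to prove; throughout I allow $S^3$ and $S^1\times S^2$ to count as (universally tight) lens spaces. So assume $\xi$ is virtually overtwisted.

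The first step is to exhibit a mixed torus. Write the contact manifold in normal form as $L(p,q)=W_0\cup B_1\cup\cdots\cup B_N\cup W_1$, with $W_0,W_1$ standard neighborhoods of the Legendrian cores of the two Heegaard solid tori and $B_1,\dots,B_N$ a minimal (efficient) chain of basic slices realizing $\xi$. By Honda's classification, a tight contact structure on a lens space is universally tight precisely when such a chain can be taken to have constant sign; since $\xi$ is virtually overtwisted, every normal-form chain for $\xi$ has a sign change, say between $B_j$ and $B_{j+1}$, and efficiency of the chain forces the thickened torus $B_j\cup B_{j+1}$ to be virtually overtwisted rather than one that could be made universally tight by a shuffle. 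Hence the convex torus $T$ separating $B_j$ from $B_{j+1}$, with neighborhood $B_j\cup B_{j+1}$, is a mixed torus once the framing is normalized by an element of $\mathrm{SL}(2,\mathbb{Z})$ so that $s_0=-1$ and $s_1=\infty$; note that $1\le j\le N-1$.

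Now I would apply Theorem~\ref{thm:jsj}. Every strong (respectively, exact) filling $(W,\omega)$ of $(L(p,q),\xi)$ is obtained by round symplectic $1$-handle attachment from a strong (respectively, exact) filling $(W',\omega')$ of $(M',\xi')$, where $M'$ results from splitting $L(p,q)$ along $T$ with some slope $0\le s\le s_2-1$. Since $T$ separates $L(p,q)$, we have $M'=M'_0\sqcup M'_1$ with $M'_0=(W_0\cup B_1\cup\cdots\cup B_j)\cup S_0$ and $M'_1=S_1\cup(B_{j+1}\cup\cdots\cup B_N\cup W_1)$; each is a union of two solid tori along their common boundary torus, hence a lens space, each inherits a tight --- and so, by Honda, universally tight or virtually overtwisted --- contact structure $\xi'_k$, and (the cut hypersurface being separating) $(W',\omega')$ splits accordingly as $(W'_0,\omega'_0)\sqcup(W'_1,\omega'_1)$ with $(W'_k,\omega'_k)$ a strong (respectively, exact) filling of $(M'_k,\xi'_k)$. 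Crucially, capping a contact solid torus with the standard tight solid torus $S_k$ introduces no basic slices, so the lens space $M'_0$ admits a basic slice decomposition with at most $j\le N-1$ slices and $M'_1$ one with at most $N-j\le N-1$ slices: each $(M'_k,\xi'_k)$ is strictly simpler than $(L(p,q),\xi)$, and the inductive hypothesis applies to it.

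To finish, I would assemble the pieces. A round symplectic $1$-handle decomposes as a Weinstein $1$-handle attachment followed by a Weinstein $2$-handle attachment (compare the derivation of Theorem~\ref{thm:menke-knot} from Theorem~\ref{thm:jsj}), and in our case the two feet of the round handle are the solid tori $S_0\subset M'_0$ and $S_1\subset M'_1$, lying in different components of $M'$; thus the Weinstein $1$-handle turns $(W'_0,\omega'_0)\sqcup(W'_1,\omega'_1)$ into a filling of the connected sum $M'_0\#M'_1$ --- namely their boundary connected sum --- after which a single Weinstein $2$-handle produces $(W,\omega)$. Inserting the inductive descriptions of the fillings of $(M'_0,\xi'_0)$ and $(M'_1,\xi'_1)$, isotoping the round handle's feet off the attaching regions of those $2$-handles so that the boundary connected sum can be performed first, and using that a connected sum of connected sums of universally tight lens spaces is again one, we conclude that $(W,\omega)$ is obtained from a strong (respectively, exact) filling of a connected sum of universally tight lens spaces by attaching a sequence of Weinstein $2$-handles. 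I expect the main obstacle to lie in the second step, in passing from ``virtually overtwisted'' to ``contains a mixed torus'': this needs the normal-form refinement of Honda's classification, so that an unremovable sign change genuinely yields a virtually overtwisted thickened torus. A secondary point requiring care is verifying that Menke's split returns honest lens spaces of strictly smaller complexity and that, in this separating situation, $(W',\omega')$ really does inherit the disjoint-union structure --- the bookkeeping that keeps the induction well-founded and converts the round $1$-handle into the connected-sum-plus-$2$-handle picture of the statement.
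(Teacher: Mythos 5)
Your overall strategy is the paper's: locate a mixed torus at a sign change in the basic slice decomposition, apply Theorem~\ref{thm:jsj}, convert the round $1$-handle into a Weinstein $1$-handle plus a Weinstein $2$-handle, and induct. The paper organizes this by the surgery diagram rather than by basic slice count --- it treats a knot stabilized with both signs via Theorem~\ref{thm:menke-knot}, and otherwise places the mixed torus between the continued fraction blocks of two adjacent, oppositely stabilized unknots $K_+$ and $K_-$ --- and it carries out an explicit normalization showing $s_2-1=m+1$ ($m$ the number of unstabilized knots between $K_+$ and $K_-$), so that each admissible splitting slope is identified with deleting one knot from Figure~\ref{fig:lens-space-filling}. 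That extra computation is not needed for the bare existence statement you are proving (it feeds the explicit ``tree of outcomes'' used in the corollaries), so your decision to only bound the complexity of the split pieces is legitimate, and your identification of a mixed torus at any adjacent opposite-sign pair of basic slices is sound (it covers both of the paper's cases at once).

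There is one genuine gap, in the step where you write that, ``the cut hypersurface being separating,'' $(W',\omega')$ splits as $(W'_0,\omega'_0)\sqcup(W'_1,\omega'_1)$ with $(W'_k,\omega'_k)$ filling $(M'_k,\xi'_k)$. The separating hypothesis gives $M'=M'_0\sqcup M'_1$, but Theorem~\ref{thm:jsj} only asserts that $(W',\omega')$ is a \emph{possibly disconnected} filling of its boundary; nothing in the statement prevents a single connected component of $W'$ from having boundary $M'_0\sqcup M'_1$, in which case there are no separate fillings $(W'_k,\omega'_k)$ to which your inductive hypothesis can be applied. This can be repaired in two ways: either import an external result forcing the splitting (e.g.\ Etnyre's theorem that a strong filling of a contact manifold with a planar component has connected boundary, applied componentwise, together with the planarity of all tight lens spaces), or --- as the paper in effect does --- restructure the induction so that it never needs $W'$ to split: immediately attach the Weinstein $1$-handle to obtain a filling of the connected sum $M'_0\#M'_1$, strengthen the inductive statement to cover fillings of connected sums of tight lens spaces (measuring complexity by the total number of basic slices over all summands), and find the next mixed torus inside a single summand. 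With that restructuring, which also absorbs your final handle-reordering step, the argument goes through and the remaining details match the paper's.
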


\begin{remark}
This result has also been obtained by Etnyre-Roy in \cite{etnyre2021symplectic}, where the consequences of this classification are more fully explored.  Moreover, if the universally tight lens spaces which result from Theorem \ref{thm:lens-space-fillings} have their fillings classified up to symplectomorphism, then Etnyre-Roy give a classification of the fillings of the original lens space up to symplectomorphism.
\end{remark}

Note that, while Theorem~\ref{thm:jsj} is stated for exact symplectic fillings, Theorem~\ref{thm:lens-space-fillings} includes statements for both strong and exact symplectic fillings.  For lens spaces, the classification problems for strong and exact symplectic fillings are equivalent.  Per Wendl \cite{wendl2010strongly}, every strong symplectic filling of a contact 3-manifold supported by a planar open book decomposition is symplectic deformation equivalent to a blow-up of a Stein filling of the contact manifold.  But all tight contact structures on lens spaces are supported by planar open book decompositions, according to Schonenberger \cite[Theorem 3.3]{schonenberger2007determining}.  Thus a classification of the exact symplectic fillings of a lens space provides a classification of the strong symplectic fillings, up to symplectic deformation equivalence and blow-up.\\

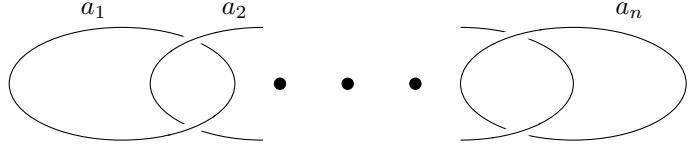
\begin{figure}
\centering
\begin{tikzpicture}[scale=1.5]
\begin{knot}[
	clip width=3.5,
	clip radius=0.31cm,
	ignore endpoint intersections=false,
	flip crossing/.list={2,4}]
\strand (-2,2) arc (90:450:1 and 0.5);
\strand (-0.75,2) arc (90:270:1 and 0.5);
\strand (1,1) arc (-90:90:1 and 0.5);
\strand (3,1.5) arc (0:360:1 and 0.5);
\end{knot}

\begin{scope}[yshift=6cm]
\filldraw[color=black, fill=black](0,-4.5) circle (0.05);
\filldraw[color=black, fill=black](-0.6,-4.5) circle (0.05);
\filldraw[color=black, fill=black](0.6,-4.5) circle (0.05);
\end{scope}

\node[above] at (-2.25,2) {$a_1$};
\node[above] at (-1,2) {$a_2$};
\node[above] at (2.5,2) {$a_n$};
\end{tikzpicture}
\caption{Handlebody diagram for a filling of $L(p,q)$.  We produce a contact structure on $L(p,q)$ by putting each of the unknots in Legendrian position and stabilizing appropriately.}
\label{fig:lens-space-filling}
\end{figure}

By work of Giroux \cite{giroux2000structures} and Honda \cite{honda2000classification}, all tight contact structures on $L(p,q)$ can be described as the contact boundary of a Stein handlebody.  For $p>q>0$, we write
\begin{equation}\label{eq:lens-space-continued-fraction}
-\frac{p}{q} = [a_0,a_1,\ldots,a_n] := a_0 - \cfrac{1}{a_1-\cfrac{1}{\ddots-\cfrac{1}{a_n}}},
\end{equation}
for some uniquely determined integers $a_0,\ldots,a_n\leq -2$.  Then $L(p,q)$ admits $\vert\Pi_{i=1}^n(a_i+1)|$ distinct tight contact structures, up to isotopy.  We realize these contact structures by putting the unknots of Figure \ref{fig:lens-space-filling} into Legendrian position and stabilizing until the framing coefficient becomes $-1$ with respect to the contact framing.  In particular, the knot labeled $a_i$ is stabilized $-2-a_i$ times, giving us $-1-a_i$ choices for how this stabilization is performed.  The universally tight contact structures on $L(p,q)$ are those for which every stabilization (across all knots) is of a single sign.\\

If, in the virtually overtwisted case, our handlebody diagram features a knot $K$ which has been stabilized both positively and negatively, then we may immediately apply Theorem \ref{thm:menke-knot} to conclude that all fillings of $(L(p,q),\xi)$ result from attaching a Weinstein 2-handle to $(L(p',q'),\xi')\#(L(p'',q''),\xi'')$, the connected sum that remains when $K$ is removed from the diagram.  Note that this recovers a result of Plamenevskaya--Van Horn-Morris \cite{plamenevskaya2010planar} which says that $(L(p,1),\xi_{vot})$ has a unique exact filling, for all $p$.  Thus the work of proving Theorem \ref{thm:lens-space-fillings} is reduced to the case where each knot in the handlebody diagram for $(L(p,q),\xi)$ features stabilizations of only one sign, but for which these signs do not all agree.  In such a case we are still able to find a mixed torus, but the contact manifold $\partial(W',\omega')$ which results from applying Theorem \ref{thm:jsj} to a filling $(W,\omega)$ of $(L(p,q),\xi)$ is not uniquely determined.  The possibilities are enumerated in Section \ref{sec:lens-space-proofs}.\\

The classification of symplectic fillings for lens spaces has a long history.  Work of Gromov \cite{gromov1985pseudo} and Eliashberg \cite{eliashberg1990filling} implies that the unique tight contact structures on $S^3$ and $S^1\times S^2$ admit unique exact fillings.  Later, McDuff \cite{mcduff1990structure} showed that the standard tight contact structure on $L(p,1)$ is uniquely fillable, except in the case $p=4$, when there are precisely two exact fillings, up to symplectomorphism.  More generally, Lisca \cite{lisca2008symplectic} obtained a classification up to orientation-preserving diffeomorphism of the symplectic fillings of $(L(p,q),\xi_{\std})$.  In the case of a virtually overtwisted contact structure on $L(p,q)$, we have the above-cited result of Plamenevskaya--Van Horn-Morris, as well as results due to Kaloti \cite{kaloti2013stein}, Fossati \cite{fossati2019contact}, and others for several families of lens spaces.  Theorem \ref{thm:lens-space-fillings} reduces the virtually overtwisted problem to the universally tight problem, and thus completes the classification of strong symplectic fillings of lens spaces up to orientation-preserving diffeomorphism.

\subsection{Surgeries on Legendrian negative cables}
Next we consider spaces obtained from $(S^3,\xi_{\std})$ via contact surgery along certain Legendrian knots.  Theorem \ref{thm:menke-knot} is the first instance of such a result, showing that these surgeries have unique fillings when the Legendrian knot has been stabilized both positively and negatively.  In this section we study fillings in the case that our knot is a Legendrian negative cable of a Legendrian with stabilizations of opposite sign.\\

First defined in \cite{ng2001invariants}, a thorough study of Legendrian satellite knots can be found in \cite{etnyre2018legendrian}, some notation of which we now recall.  We consider a contact manifold $(V,\xi_V)$ defined by $V=D^2_{y,z}\times S^1_\theta$, $\xi_V=\ker(dz-yd\theta)$.  Any Legendrian knot $L\subset(S^3,\xi_{\std})$ has a neighborhood $\nu(L)$ which is contactomorphic to $(V,\xi_V)$, and given any Legendrian knot $Q\subset V$, we denote by $Q(L)\subset\nu(L)$ the image of $Q$ under this contactomorphism.  We pay special attention to the case where $Q\subset V$ is a Legendrian $(p,q)$-torus knot, for some coprime $p,q$ with $q>0$, in which case we call $Q(L)$ a \emph{Legendrian cable} of $L$.  We point out that if $Q$ is a $(p,q)$-torus knot and $\mathcal{K}$ is the knot type of $L$, then the knot type of $Q$ is $\mathcal{K}_{p+q\tb(L),q}$, that of a smooth $(p+q\tb(L),q)$-cable of $L$.  The reason for this is that the contactomorphism between $\nu(L)$ and $V$ identifies the product framing on $V$ with the contact framing on $\nu(L)$; see \cite[Section 5]{etnyre2018legendrian} for more details.\\

We will also need a particular embedding of $(V,\xi_V)$ into itself.  Notice that the core $C$ of $V$ is a Legendrian curve, and that $(V,\xi_V)=\nu(C)$ is a standard neighborhood of $C$.  We may stabilize $C$ to obtain $S_+(C)\subset V$ and identify a standard neighborhood $\nu(S_+(C))\subset V$ of the stabilization.  We have a contactomorphism between $\nu(C)=(V,\xi_V)$ and $\nu(S_+(C))\subset (V,\xi_V)$, giving us an embedding $\zeta\colon(V,\xi_V)\hookrightarrow(V,\xi_V)$.  Given some Legendrian $Q\subset V$, this embedding produces $\zeta(Q)\subset V$, a Legendrian cable of the stabilization $S_+(C)$.\\

Next we point out that a Legendrian knot $Q\subset V$ which is smoothly a $(p,q)$-torus knot can be used to determine a tight contact structure $\xi_Q$ on $L(q^2,pq-1)$.  The construction is as follows: let $S\cong(D^2\times S^1,\xi_{\std})$ be a tight solid torus, glued to $V$ in such a way that $V\cup S\cong(S^2\times S^1,\xi_{\std})$.  Then $(L(q^2,pq-1),\xi_Q)$ is the result of Legendrian surgery on $(S^2\times S^1,\xi_{\std})$ along $\zeta(Q)$.\\

The result of this section will consider fillings of the contact manifold which results from Legendrian surgery on $(S^3,\xi_{\mathrm{std}})$ along a knot $Q(S_+S_-(L))$, under certain conditions on this knot.  In particular, we will show that all such fillings may be obtained by attaching a Weinstein 2-handle to a filling of $(L(q^2,pq-1),\xi_Q)$ along a Legendrian knot $L_Q\subset (L(q^2,pq-1),\xi_Q)$.  Our final preparation before stating the result is to identify the knot $L_Q$.  First, consider the knot $K=\{\mathrm{pt}\}\times S^1$ in $(S^2\times S^1,\xi_{\std})$; we take $K$ to be disjoint from $Q\subset S^2\times S^1$.  By performing the contact connected sum $(S^2\times S^1,\xi_{\std})\#(S^3,\xi_{\std})$ along points $x\in K$ and $y\in S_-(L)$, we obtain $K\# S_-(L)$ as a Legendrian knot in $(S^2\times S^1,\xi_{\std})$.  Finally, we perform Legendrian surgery on $(S^2\times S^1,\xi_{\std})$ along $\zeta(Q)$, and $K\# S_-(L)$ passes to a Legendrian knot in $(L(q^2,pq-1),\xi_Q)$.  This is the Legendrian knot $L_Q$ of interest to us.\\

We are now prepared to state our result.

\begin{theorem}\label{thm:cables}
Let $L\subset(S^3,\xi_{\std})$ be a Legendrian knot with smooth knot type $\mathcal{K}$, and let $Q(S_+S_-(L))$ be a Legendrian negative cable of $S_+S_-(L)$, the smooth knot type of which is $\mathcal{K}_{p,q}$.  Suppose that the Thurston-Bennequin number of $Q(S_+S_-(L))$ is maximal among Legendrian knots of type $\mathcal{K}_{p,q}$, and let $(M,\xi)$ be the contact manifold which results from Legendrian surgery on $(S^3,\xi_{\std})$ along $Q(S_+S_-(L))$.  Then every exact symplectic filling of $(M,\xi)$ may be obtained by attaching a Weinstein 2-handle to an exact symplectic filling of $(L(q^2,pq-1),\xi_Q)$ along $L_Q\subset L(q^2,pq-1)$.
\end{theorem}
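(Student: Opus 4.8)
The key idea is to realize the cabling and stabilization structure in terms of Menke's torus decomposition, so that Theorem~\ref{thm:menke-knot} applies directly.

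\medskip

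The plan is as follows. First I would recall the standard neighborhood picture: the Legendrian negative cable $Q(S_+S_-(L))$ lives inside a standard neighborhood $\nu(S_+S_-(L))$, which is contactomorphic to $(V,\xi_V)$. Inside $(V,\xi_V)$ the knot $Q$ is a Legendrian $(p,q)$-torus knot, and because we assume $\tb(Q(S_+S_-(L)))$ is maximal, $Q$ sits on a convex torus $T$ parallel to $\partial V$ in a controlled way: $T$ bounds on one side a tight solid torus containing the core $C$ and on the other side the region $V\setminus \nu(C)$. The essential point is that the core of this neighborhood is $S_+S_-(L)$, which has been stabilized \emph{both} positively and negatively. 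One then wants to show that the convex torus $T$ (suitably isotoped, after the surgery producing $(M,\xi)$) is a \emph{mixed torus}: since $S_+S_-(L)$ is stabilized in both signs, the two sides of $T$ see basic slices of opposite sign, which is exactly the condition for a virtually overtwisted $T^2\times[0,2]$ neighborhood. This is the heart of the argument and I expect it to be the main obstacle — carefully tracking the slopes $s_0,s_1,s_2$ under the normalization in Theorem~\ref{thm:jsj}, checking that the basic-slice signs disagree, and confirming that the splitting slope $s$ lands in the required range so that the decomposition is forced to be the one I want.

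\medskip

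Second, having identified the mixed torus, I would apply Theorem~\ref{thm:jsj} to an arbitrary strong symplectic filling $(W,\omega)$ of $(M,\xi)$. This produces a filling $(W',\omega')$ of $(M',\xi')$, where $(M',\xi')$ is obtained by splitting $(M,\xi)$ along $T$, and $(W,\omega)$ is recovered by a round symplectic $1$-handle attachment. The work here is to \emph{identify} $(M',\xi')$: I claim the splitting along $T$ exactly undoes the cabling operation, replacing the cabled surgery solid torus by a standard one, and that the resulting manifold is precisely $(L(q^2,pq-1),\xi_Q)\#(S^3,\xi_{\std})$, or more to the point, the round $1$-handle attachment recovering $W$ from $W'$ is the same as a Weinstein $2$-handle attachment along $S_-(L)$ (equivalently $S_+S_-(L)$, up to a stabilization absorbed into the handle). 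This identification is essentially bookkeeping with the continued-fraction/Honda classification of tight contact structures on the relevant solid tori and lens spaces, combined with the construction of $(L(q^2,pq-1),\xi_Q)$ and the embedding $\zeta$ described in the preamble; the fact that a round symplectic $1$-handle attached along an appropriate closed orbit can be re-expressed as a Weinstein $2$-handle attached along a Legendrian is already implicit in how Theorem~\ref{thm:menke-knot} is deduced from Theorem~\ref{thm:jsj} in \cite{menke2018jsj}.

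\medskip

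Finally, to get the statement in the clean form "attach a Weinstein $2$-handle along $S_-(L)$", I would note that by the construction relating $L\subset S^3$ to a Legendrian knot $L\subset L(q^2,pq-1)$ via the contact connected sum with $K=\{\mathrm{pt}\}\times S^1$ and Legendrian surgery along $\zeta(Q)$, the knot $S_+S_-(L)$ inside $M$ corresponds, after the splitting, to $S_-(L)$ inside $L(q^2,pq-1)$: one positive stabilization is "used up" by passing through the torus $T$ (it is what makes the two basic slices have opposite sign), leaving $S_-(L)$ as the attaching region on the $W'$ side. Assembling these three steps — (i) exhibit the mixed torus, (ii) run Theorem~\ref{thm:jsj} and identify $(M',\xi')$ together with the handle, (iii) translate the attaching curve back to $S_-(L)\subset L(q^2,pq-1)$ — yields the theorem. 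I would expect step (i), verifying the mixed-torus condition and pinning down the decomposition, to require the most care, while steps (ii) and (iii) are a matter of carefully applying Honda's classification and the satellite constructions recalled above.
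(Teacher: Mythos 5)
Your overall strategy --- locate a mixed torus produced by the opposite-sign stabilizations of $S_+S_-(L)$, run Theorem~\ref{thm:jsj} on an arbitrary strong filling, and then identify the resulting pieces and the round 1-handle --- is the same as the paper's. But there is a genuine problem in your step (i), which you correctly flag as the heart of the matter: the torus you propose is not the right one. You place $T$ inside $V=\nu(S_+S_-(L))$ (a convex torus carrying $Q$, bounding a standard neighborhood of the core $C$ on one side). Both basic slices produced by the two stabilizations --- the negative one between $\partial\nu(L)$ and $\partial\nu(S_-(L))$ and the positive one between $\partial\nu(S_-(L))$ and $\partial\nu(S_+S_-(L))$ --- lie \emph{outside} $V$, hence on the same side of your $T$, so the fact that $S_+S_-(L)$ carries stabilizations of both signs cannot make your $T$ mixed. (Worse, if $Q$ literally lies on $T$, then $T$ does not survive the surgery as an embedded torus in $M$.) The correct choice is $T=\partial N(S_-(L))$, the boundary of the standard neighborhood one stabilization further out: it is sandwiched between the negative slice on its outside and the positive slice on its inside, and the surgery takes place entirely inside $N(S_-(L))$, so $T$ persists in $M$. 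One still must arrange the positive slice to be disjoint from $\nu(Q(S_+S_-(L)))$, which the paper does by subdividing that slice to find a boundary-parallel convex torus cobounding a positive basic slice with $T$ (c.f.\ \cite[Lemma 3.15]{etnyre2001knots}).

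The second gap is the splitting slope. You note that one must check that ``the splitting slope $s$ lands in the required range so that the decomposition is forced to be the one I want,'' but you give no mechanism for forcing it. Theorem~\ref{thm:jsj} only guarantees some integer slope $0\leq s\leq s_2-1$, and the identification of $(M',\xi')$ as $(L(q^2,pq-1),\xi_Q)\sqcup(S^3,\xi_{\std})$ with attaching curves $K$ and $S_-(L)$ depends entirely on pinning this slope down. The paper's argument is: the glued solid torus $S$ has meridian $(1,m)$ since the dividing curves of $T$ are vertical; tightness of $V_1\cup S$ forces $m\geq 0$ because the hypothesis $p<q(\tb(L)-2)$ places the meridional slope of the surgered torus $V_1$ strictly between $-1$ and $0$, while tightness of $V_2\cup S$ forces $m\leq 0$; hence $m=0$. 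Without this, your step (ii) identification and your step (iii) claim about the attaching curve are unsupported. Your heuristic that ``one positive stabilization is used up'' is consistent with the answer but is not a proof: once $m=0$ is established, the attaching curve on the $S^3$ side is the core of $S$, namely $S_-(L)$, and on the other side it is $K$, with $K\#S_-(L)\cong S_-(L)$ after the $(S^3,\xi_{\std})$ summand is absorbed.
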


\begin{remark}$ $
\begin{enumerate}
	\item Because $Q(S_+S_-(L))$ has the smooth knot type $\mathcal{K}_{p,q}$, this knot is a Legendrian $(p+q(2-\tb(L)),q)$-cable of $S_+S_-(L)$.  Since $Q(S_+S_-(L))$ is a Legendrian negative cable, $p<q(\tb(L)-2)$.
	\item According to \cite[Theorem 5.16]{etnyre2018legendrian}, the Thurston-Bennequin number of $Q(S_+S_-(L))$ is $pq$, and thus $M$ is the result of $(pq-1)$-surgery along $\mathcal{K}_{p,q}$.  By \cite[Corollary 7.3]{gordon1983dehn}, this surgery is diffeomorphic to $(pq-1)/q^2$-surgery along $\mathcal{K}$.
\end{enumerate}
\end{remark}

\subsection{Seifert fibered spaces over $S^2$}\label{subsec:sfs}
In this section we apply Menke's result to large classes of contact structures on spaces which are Seifert fibered over $S^2$, with at least three singular fibers.  Our results reduce the classification of fillings of these spaces to the classification problem for lens spaces --- a problem which is settled by the previous section.  We will first consider Seifert fibered spaces whose Euler number $e_0$ is non-negative, and then consider spaces with $e_0\leq -3$.  Here the \emph{Euler number} of a Seifert fibered space $M(r_1,\ldots,r_n)$ over $S^2$ is defined to be $e_0:=\Sigma\lfloor r_i\rfloor$.  Starkston \cite{starkston2015symplectic} and Choi-Park \cite{choi2019symplectic} have previously studied fillings of small Seifert fibered spaces satisfying $e_0\leq -3$, but we consider a distinct collection of contact structures on these spaces.\\

On small Seifert fibered spaces --- those with precisely three singular fibers --- the contact structures satisfying $e_0\geq 0$ or $e_0\leq -3$ have been classified by Ghiggini-Lisca-Stipsicz \cite{ghiggini2006classification} and Wu \cite{wu2004tight}, and we will see that Menke's results apply to a great many of these structures.  For Seifert fibered spaces over $S^2$ with more than three singular fibers, the tight structures have not been fully classified, but we can construct large classes of tight structures for which Menke's result applies.

\subsubsection{The case $e_0\geq 0$}
We now consider a Seifert fibered space over $S^2$ with $n\geq 3$ singular fibers.  Choose coprime integers $q_i,p_i>0$, $i=1,\ldots,n$, which satisfy $q_i<p_i$ for $i=1,\ldots,n-1$.  We may construct a tight contact structure $\xi$ on $M=M(\frac{q_1}{p_1},\cdots,\frac{q_n}{p_n})$ by realizing $M$ as the boundary of a Stein domain with handlebody description as in Figure \ref{fig:seifert-filling}.  Here
\begin{equation}\label{eq:continued-fraction}
-\frac{p_i}{q_i} = [a_0^i,a_1^i,\ldots,a_{l_i}^i]
\quad
\text{for~}
i=1,\ldots,n,
\end{equation}
for some uniquely determined integers
\[
a_0^n \leq -1
\quad\text{and}\quad
a_0^1,\ldots,a_0^{n-1},a_1^i,\ldots,a_{l_i}^i\leq -2.
\]
We obtain a Stein structure on the handlebody in Figure \ref{fig:seifert-filling} by putting each unknot in Legendrian position with clockwise orientation and stabilizing until the framing coefficient becomes $-1$ with respect to the contact framing.  It is possible for distinct choices of stabilizations to lead to the same tight contact structure on $M$ --- that is, there are equivalence relations among the handlebody diagrams.  For small Seifert fibered spaces, Ghiggini-Lisca-Stipsicz show in \cite{ghiggini2006classification} that there are precisely
\[
\left\vert\left(\prod_{i=1}^3(a_0^i+1)-\prod_{i=1}^3a_0^i\right)\prod_{i=1}^3\prod_{j=1}^{l_i}(a_j^i+1)\right\vert
\]
positive tight contact structures on $M$, up to isotopy.  If $M$ has four singular fibers, Medeto\u{g}ullari shows in \cite{medetogullari2010tight} that the number of distinct Stein fillable contact structures is between
\[
\left\vert\left(\prod_{i=1}^4(a_0^i+1)-\prod_{i=1}^4a_0^i\right)\prod_{i=1}^4\prod_{j=1}^{l_i}(a_j^i+1)\right\vert
\quad\text{and}\quad
2\left\vert\left(\prod_{i=1}^4(a_0^i+1)-\prod_{i=1}^4a_0^i\right)\prod_{i=1}^4\prod_{j=1}^{l_i}(a_j^i+1)\right\vert.
\]
Generally, if $n\geq 4$, then $M$ contains incompressible tori and therefore admits infinitely many tight contact structures according to work of Colin \cite{colin2001torsion,colin2001infinite} and Honda-Kazez-Mati{\'c} \cite{honda2002convex}.\\

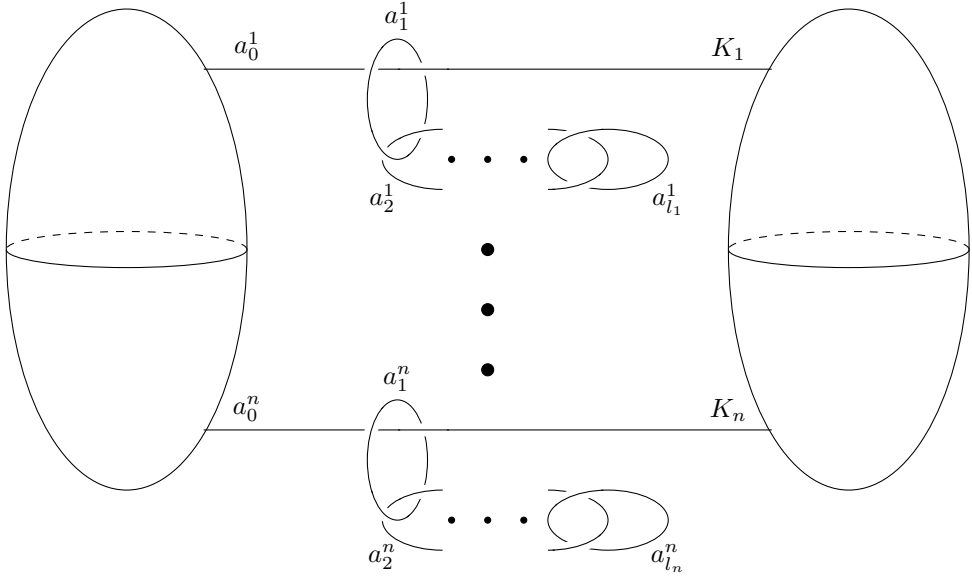
\begin{figure}
\centering
\begin{tikzpicture}[scale=0.8]
\draw (-6,0) ellipse (2 and 4);
\draw (-8,0) arc (180:360:2 and 0.3);
\draw[dashed] (-4,0) arc (0:180:2 and 0.3);
\draw (6,0) ellipse (2 and 4);
\draw (4,0) arc (180:360:2 and 0.3);
\draw[dashed] (8,0) arc (0:180:2 and 0.3);

\begin{knot}[
	clip width=3.5,
	clip radius=0.31cm,
	ignore endpoint intersections=false,
	flip crossing/.list={1,4,6}]
\strand (-4.72,3) to (4.72,3);
\strand (-1.5,3.5) arc (90:450:0.5 and 1);
\strand (-0.75,2) arc (90:270:1 and 0.5);
\strand (1,1) arc (-90:90:1 and 0.5);
\strand (3,1.5) arc (0:360:1 and 0.5);
\end{knot}

\begin{scope}[yshift=-6cm]
\begin{knot}[
	clip width=3.5,
	clip radius=0.31cm,
	ignore endpoint intersections=false,
	flip crossing/.list={1,4,6}]
\strand (-4.72,3) to (4.72,3);
\strand (-1.5,3.5) arc (90:450:0.5 and 1);
\strand (-0.75,2) arc (90:270:1 and 0.5);
\strand (1,1) arc (-90:90:1 and 0.5);
\strand (3,1.5) arc (0:360:1 and 0.5);
\end{knot}
\end{scope}

\filldraw[color=black, fill=black](0,0) circle (0.1);
\filldraw[color=black, fill=black](0,-1) circle (0.1);
\filldraw[color=black, fill=black](0,-2) circle (0.1);
\begin{scope}[yshift=6cm]
\filldraw[color=black, fill=black](0,-4.5) circle (0.05);
\filldraw[color=black, fill=black](-0.6,-4.5) circle (0.05);
\filldraw[color=black, fill=black](0.6,-4.5) circle (0.05);
\end{scope}
\filldraw[color=black, fill=black](0,-4.5) circle (0.05);
\filldraw[color=black, fill=black](-0.6,-4.5) circle (0.05);
\filldraw[color=black, fill=black](0.6,-4.5) circle (0.05);

\node[above] at (4,3) {$K_1$};
\node[above] at (-4,3) {$a_0^1$};
\node[above] at (-1.5,3.5) {$a_1^1$};
\node[below] at (-1.75,1.25) {$a_2^1$};
\node[below] at (3,1.25) {$a_{l_1}^1$};

\begin{scope}[yshift=-6cm]
\node[above] at (4,3) {$K_n$};
\node[above] at (-4,3) {$a_0^n$};
\node[above] at (-1.5,3.5) {$a_1^n$};
\node[below] at (-1.75,1.25) {$a_2^n$};
\node[below] at (3,1.25) {$a_{l_n}^n$};
\end{scope}
\end{tikzpicture}
\caption{Handlebody decomposition of a Stein filling of $M=M(\frac{q_1}{p_1},\cdots,\frac{q_n}{p_n})$.  A contact structure is produced on $M$ by putting each of the knots in Legendrian position and stabilizing appropriately. }
\label{fig:seifert-filling}
\end{figure}

Our first result for spaces $M$ which are Seifert fibered over $S^2$ applies to tight contact structures which are \emph{thoroughly mixed}, a notion we define shortly.  The definition is designed so that each singular fiber of $M$ admits a tubular neighborhood whose boundary is a mixed torus.  Applying Theorem \ref{thm:jsj} to a filling of $(M,\xi)$ will then leave us with a boundary connected sum of fillings of lens spaces.

\begin{definition}
Let $\xi$ be a tight contact structure on $M(\frac{q_1}{p_1},\cdots,\frac{q_n}{p_n})$.  We will call $\xi$ \emph{thoroughly mixed} if $\xi$ admits a Stein filling as in Figure~\ref{fig:seifert-filling} such that one of the following holds:
\begin{itemize}
	\item if $e_0=0$, then each of $K_1,\ldots,K_n$ has been stabilized positively (or, equivalently, each has been stabilized negatively);
	\item if $e_0>0$, then each of $K_1,\ldots,K_{n-1}$ has been stabilized positively, and the nearest stabilized unknot adjacent to $K_n$ has also been stabilized positively (or, equivalently, each of these stabilizations is negative.
\end{itemize}
Notice that $K_n$ has no stabilizations in the case $e_0>0$.
\end{definition}

\begingroup
\tikzset{every picture/.style={scale=0.56}}
\begin{figure}
\centering
\begin{subfigure}{0.45\linewidth}
\centering
\begin{tikzpicture}[xscale=0.8,yscale=0.7]
\draw (-6,0) ellipse (2 and 4);
\draw (-8,0) arc (180:360:2 and 0.3);
\draw[dashed] (-4,0) arc (0:180:2 and 0.3);
\draw (6,0) ellipse (2 and 4);
\draw (4,0) arc (180:360:2 and 0.3);
\draw[dashed] (8,0) arc (0:180:2 and 0.3);

\draw (-4.72,3) to[out=0,in=180]
	  (-2,3) to[out=180,in=0]
	  (-3,2.5) to[out=0,in=180]
	  (3,2.5) to[out=180,in=0]
	  (2,3) to[out=0,in=180]
	  (4.72,3);
\draw (-4,0) to[out=0,in=180]
	  (-2,0) to[out=180,in=0]
	  (-3,-0.5) to[out=0,in=180]
	  (4,0);

\begin{scope}[yshift=-6cm]
\draw (-4.72,3) to[out=0,in=180]
	  (-2,3) to[out=180,in=0]
	  (-3,2.5) to[out=0,in=180]
	  (4.72,3);
\end{scope}

\node[above] at (4,3) {$K_1$};
\node[above] at (3.25,0) {$K_2$};
\begin{scope}[yshift=-6cm]
\node[above] at (3.75,3) {$K_3$};
\end{scope}
\end{tikzpicture}
\caption{A thoroughly mixed contact structure.}
\label{fig:thoroughly-mixed-surgery-diagram}
\end{subfigure}~~~~~
\begin{subfigure}{0.45\linewidth}
\centering
\begin{tikzpicture}[xscale=0.8,yscale=0.7]
\draw (-6,0) ellipse (2 and 4);
\draw (-8,0) arc (180:360:2 and 0.3);
\draw[dashed] (-4,0) arc (0:180:2 and 0.3);
\draw (6,0) ellipse (2 and 4);
\draw (4,0) arc (180:360:2 and 0.3);
\draw[dashed] (8,0) arc (0:180:2 and 0.3);

\draw (-4.72,3) to[out=0,in=180]
	  (-2,3) to[out=180,in=0]
	  (-3,2.5) to[out=0,in=180]
	  (3,2.5) to[out=180,in=0]
	  (2,3) to[out=0,in=180]
	  (4.72,3);
\draw (-4,0) to[out=0,in=180]
	  (-2,0) to[out=180,in=0]
	  (-3,-0.5) to[out=0,in=180]
	  (4,0);

\begin{scope}[yshift=-6cm]
\draw (-4.72,3) to[out=0,in=180]
	  (3,2.5) to[out=180,in=0]
	  (2,3) to[out=0,in=180]
	  (4.72,3);
\end{scope}

\node[above] at (4,3) {$K_1$};
\node[above] at (3.25,0) {$K_2$};
\begin{scope}[yshift=-6cm]
\node[above] at (3.75,3) {$K_3$};
\end{scope}
\end{tikzpicture}
\caption{A lightly mixed contact structure.}
\label{fig:lightly-mixed-surgery-diagram}
\end{subfigure}
\caption{Contact structures on $M(\frac{1}{3},\frac{1}{2},\frac{1}{2})$.}
\label{fig:mixed-surgery-diagrams}
\end{figure}
\endgroup

See Figure~\ref{fig:mixed-surgery-diagrams} for an example of a thoroughly mixed contact structure.  In Section~\ref{subsubsec:mixed-structures} we will provide another construction of thoroughly mixed tight contact structures and explain how to produce a mixed torus for each singular fiber.

\begin{theorem}\label{thm:thoroughly-mixed}
Let $\xi$ be a tight contact structure on the Seifert fibered space $M=M(\frac{q_1}{p_1},\cdots,\frac{q_n}{p_n})$, for some $n\geq 3$ and coprime positive integers $q_i,p_i$ with $q_i<p_i$ for $1\leq i\leq n-1$ and $p_i\geq 2$ for $1\leq i \leq n$.  If $\xi$ is thoroughly mixed, then there are tight contact structures $\xi_i$ on $L(q_i,-p_i)$ for $i=1,\ldots,n$ and Legendrian knots $L_i^-\subset(L(q_i,-p_i),\xi_i)$, $L_i^+\subset(L(q_{i+1},-p_{i+1}),\xi_{i+1})$ for $i=1,\ldots,n-1$ such that every exact symplectic filling of $(M,\xi)$ is obtained from a disjoint union of exact fillings of $(L(q_1,-p_1),\xi_1),\ldots,(L(q_n,-p_n),\xi_n)$ by attaching a round symplectic 1-handle along $L_i^\pm$, for $i=1,\ldots,n-1$.
\end{theorem}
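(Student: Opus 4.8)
The plan is to iterate Menke's JSJ decomposition (Theorem~\ref{thm:jsj}) a total of $n-1$ times, splitting off one lens-space factor at each stage. The first task is to produce the mixed tori. Fix a Stein filling of $(M,\xi)$ with handlebody diagram as in Figure~\ref{fig:seifert-filling} realizing the thoroughly mixed hypothesis, and reorder the singular fibers so that $K_1,\dots,K_{n-1}$ are each stabilized with both signs. For such an $i$ I would pass to the standard model of $\xi$ near the exceptional fiber $F_i$: the restriction of $\xi$ to a collar $\partial\nu(F_i)\times[0,1]\subset\nu(F_i)$ is a tight contact structure on $T^2\times I$ whose basic-slice decomposition is read off from the continued fraction $[a_0^i,\dots,a_{l_i}^i]$, with the blocks coming from $a_0^i$ — that is, from $K_i$ — occurring adjacent to $\partial\nu(F_i)$. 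Since $K_i$ carries stabilizations of both signs these blocks contain a positive and a negative basic slice, and Honda's reordering of basic slices in $T^2\times I$ \cite{honda2000classification} lets us shuffle a positive and a negative slice until they are adjacent. Their union is a virtually overtwisted $T^2\times[0,2]$, so the torus between them — isotopic to $\partial\nu(F_i)$ — is a mixed torus $T_i$; and because the $\nu(F_i)$ are disjoint we may take $T_1,\dots,T_{n-1}$ pairwise disjoint.

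Given a strong symplectic filling $(W,\omega)$ of $(M,\xi)$, apply Theorem~\ref{thm:jsj} along $T_1$ to obtain a strong filling $(W^{(1)},\omega^{(1)})$ of the split $(M^{(1)},\xi^{(1)})$ from which $(W,\omega)$ is recovered by a round symplectic $1$-handle attachment. Since $T_1$ separates $M$, $(M^{(1)},\xi^{(1)})$ is disconnected: one component is obtained by Dehn filling a solid-torus neighborhood of $F_1$, hence is a lens space with a (fillable, so tight) contact structure that I claim is $\xi_1$ on $L(q_1,-p_1)$; the other component $(\widetilde M^{(1)},\widetilde\xi^{(1)})$ is again Seifert fibered over $S^2$, with the neighborhood of $F_1$ replaced by a filling solid torus. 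The induction runs because $T_2,\dots,T_{n-1}$, together with their virtually overtwisted product neighborhoods, survive as mixed tori in $\widetilde M^{(1)}$: they lie in $M\setminus\nu(F_1)$, where $\widetilde\xi^{(1)}$ agrees with $\xi$. Applying Theorem~\ref{thm:jsj} along $T_2$ to the component of $(W^{(1)},\omega^{(1)})$ filling $\widetilde M^{(1)}$ splits off a tight filling of $L(q_2,-p_2)$, and so on; after $n-1$ steps $(W,\omega)$ is recovered by $n-1$ round symplectic $1$-handle attachments from a disjoint union of strong fillings of $(L(q_1,-p_1),\xi_1),\dots,(L(q_{n-1},-p_{n-1}),\xi_{n-1})$ and a strong filling of the final leftover.

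It then remains to identify the leftover and to organize the attaching data. I would show that the leftover is Seifert fibered over $S^2$ with the single exceptional fiber $F_n$ — hence the lens space $L(q_n,-p_n)$ with a tight $\xi_n$ — by checking that each splitting slope allowed by Theorem~\ref{thm:jsj} reglues the removed singular-fiber neighborhood along a regular fiber, so that no new exceptional fibers are created and the base sphere minus $n$ disks is rebuilt one disk at a time. Each round $1$-handle is attached along Legendrian push-offs of the cores of the two filling solid tori created in consecutive splittings; because $F_i$ is split off from a neighborhood adjacent in the base to $F_{i+1}$, the $i$-th handle joins the $L(q_i,-p_i)$ summand to the $L(q_{i+1},-p_{i+1})$ summand, giving the claimed chain and the Legendrian knots $L_i^\pm$.

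The main obstacle is exactly this slope bookkeeping. Theorem~\ref{thm:jsj} only guarantees that the splitting slope $s$ lies in a range $0\le s\le s_2-1$, so a priori the Dehn fillings producing the lens-space summand and the leftover depend on $s$, and an uncontrolled slope could turn the leftover into a Seifert fibered space over $S^2$ with three or more exceptional fibers — no longer a lens space. What must be verified is that for a mixed torus arising from a $\pm$-stabilized $K_i$ as above the two flanking basic slices are, after the $\mathrm{SL}_2(\mathbb Z)$-normalization $s_0=-1$, $s_1=\infty$, the ``unit'' slices joining $-1$, $\infty$ and $1$, so that $s_2=1$ and the splitting slope is forced; this is a Farey-graph computation from the continued-fraction data $[a_0^i,\dots,a_{l_i}^i]$ of the $i$-th arm, and it is what simultaneously pins down the diffeomorphism type $L(q_i,-p_i)$ of the summand (with its finitely many possible tight $\xi_i$) and the fact that the complementary regluing occurs along a regular fiber.
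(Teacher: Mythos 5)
Your argument hinges on reading ``thoroughly mixed'' as the condition that $K_1,\dots,K_{n-1}$ each carry stabilizations of both signs, and all of your mixed tori are manufactured from such a $\pm$-pair of basic slices inside a single arm $\nu(F_i)$. That is not the definition the theorem uses. In Section \ref{sec:mixed-structures} a thoroughly mixed structure is one built by insisting only that the innermost basic slice adjacent to each boundary torus $T_i$ of $\Sigma\times S^1$ is \emph{positive}; in terms of Figure \ref{fig:seifert-filling} this amounts (say for $e_0=0$) to each $K_i$ having at least one \emph{positive} stabilization, and it is entirely possible that every knot in the diagram has stabilizations of a single sign --- for instance $M(\frac{1}{3},\frac{1}{3},\frac{1}{3})$ with each $K_i$ stabilized exactly once, positively. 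For such structures your construction produces no mixed torus at all, since no arm contains basic slices of both signs, and the induction never starts. The special case you do treat --- both signs on each of $K_1,\dots,K_{n-1}$ --- is precisely the case the paper dispatches by quoting Theorem \ref{thm:menke-knot}, as in the proof of Theorem \ref{thm:lightly-mixed}; the content of Theorem \ref{thm:thoroughly-mixed} is the more relaxed hypothesis.

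In the general case the mixed tori are the tori $T_i'$ of Lemma \ref{lemma:mixed}: $T_i'$ has vertical dividing curves and is sandwiched between the positive basic slice $L_i$ inside $\Sigma\times S^1$ and a \emph{negative} basic slice obtained by excavating a neighborhood of the remaining boundary tori together with connecting vertical annuli --- the opposite-sign slice comes from the global planar structure of $\Sigma\times S^1$, not from a single arm. This also changes the slope bookkeeping that you correctly flag as the main obstacle: for $T_1'$ the normalized boundary slopes are $-1$, $\infty$, $e_0+1$, so the allowed splitting slopes run over $0\leq m\leq e_0$ and no Farey computation forces $s_2=1$. The paper pins the slope down a posteriori instead: one component of the split manifold is a union $S\cup_{T_1'}V_1'$ of two solid tori, which is tight because it is fillable, and the requirement that the dividing-curve slopes rotate through an angle less than $\pi$ from $q_1/p_1$ through $\infty$ to $m$ forces $m=0$ (Figure \ref{fig:thoroughly-mixed-slopes}); only then is the summand identified as $L(q_1,-p_1)$ and the leftover as $M(\frac{q_2}{p_2},\dots,\frac{q_n}{p_n})$. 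Your outline is missing both of these ingredients, so as written it establishes the theorem only for a proper subclass of thoroughly mixed structures.
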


Several families of tight lens spaces are known to have unique exact fillings, and from these we obtain families of tight Seifert fibered spaces with unique exact fillings.

\begin{corollary}\label{cor:thorough}
Let $\xi$ be a thoroughly mixed tight contact structure on $M=M\left(\frac{q_1}{p_1},\cdots,\frac{q_n}{p_n}\right)$, with $q_i<p_i$ for $1\leq i\leq n-1$ and $p_i\geq 2$ and $\gcd(q_i,p_i)=1$ for $1\leq i \leq n$.  If any of the following conditions hold, then $(M,\xi)$ admits a unique exact symplectic filling, up to symplectomorphism:
\begin{enumerate}[label=(\alph*)]
	\item $q_i\in\{1,2,3\}$; \label{cond:small-q}
	\item for some $b_0^{(i)}-2>b_1^{(i)}\geq 2$ and $m_1,\ldots,m_{n-1}\geq 2$, $m_n\geq 1$, we have
	\[
	\dfrac{q_i}{p_i} = \dfrac{b_0^{(i)}b_1^{(i)}+1}{m_i(b_0^{(i)}b_1^{(i)}+1)-b_1^{(i)}}
	\]
	for $i=1,\ldots,n$; \label{cond:kaloti}
	\item for some $b_0^{(i)},b_1^{(i)}\geq 5$ and $m_1,\ldots,m_{n-1}\geq 2$, $m_n\geq 1$, we have
	\[
	\dfrac{q_i}{p_i} = \dfrac{b_0^{(i)}b_1^{(i)}-1}{m_i(b_0^{(i)}b_1^{(i)}-1)-b_1^{(i)}}
	\]
	for $i=1,\ldots,n$. \label{cond:fossati}
\end{enumerate}
\end{corollary}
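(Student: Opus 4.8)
The plan is to push an arbitrary exact filling of $(M,\xi)$ through Theorem~\ref{thm:thoroughly-mixed} and then invoke the known classifications of exact fillings of lens spaces. To begin, note that $(M,\xi)$ is exactly fillable, being by construction the contact boundary of the Stein domain pictured in Figure~\ref{fig:seifert-filling}, so it suffices to show that any two exact fillings of $(M,\xi)$ are symplectomorphic. Given an exact filling $(W,\omega)$, Theorem~\ref{thm:thoroughly-mixed} (in its exact incarnation) expresses $(W,\omega)$ as the result of attaching round symplectic $1$-handles along the Legendrian knots $L_i^\pm$, for $i=1,\dots,n-1$, to a disjoint union $\bigsqcup_{i=1}^n(W_i,\omega_i)$ of exact fillings of the lens spaces $(L(q_i,-p_i),\xi_i)$. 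Since the attaching data $L_1^\pm,\dots,L_{n-1}^\pm$ is fixed by the decomposition, and round $1$-handle attachment along a prescribed Legendrian attaching region is well-defined up to symplectomorphism once the pieces being glued are fixed up to symplectomorphism --- after composing the boundary contactomorphisms with contact isotopies matching the Legendrian isotopy classes of the $L_i^\pm$ --- the problem reduces to showing that each $(L(q_i,-p_i),\xi_i)$ has a unique exact filling up to symplectomorphism.

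I would then dispatch the three cases by identifying the lens spaces $L(q_i,-p_i)$ and citing the literature, using that $L(q,r)$ depends only on the residue $r\bmod q$. In case~\ref{cond:small-q}, $L(q_i,-p_i)$ is $S^3$, $\mathbb{RP}^3=L(2,1)$, or $L(3,1)$ up to orientation; every tight contact structure on each of these is known to admit a unique exact filling --- for $S^3$ by Gromov~\cite{gromov1985pseudo} and Eliashberg~\cite{eliashberg1990filling}, and for $\mathbb{RP}^3$ and $L(3,1)$ by McDuff~\cite{mcduff1991symplectic} and the classification of Lisca~\cite{lisca2008symplectic}. In case~\ref{cond:kaloti}, writing $p_i=m_iq_i-b_1^{(i)}$ gives $p_i\equiv -b_1^{(i)}\pmod{q_i}$, so $L(q_i,-p_i)\cong L(b_0^{(i)}b_1^{(i)}+1,\,b_1^{(i)})$, a lens space all of whose tight contact structures have unique exact fillings by Kaloti~\cite{kaloti2013stein} once $b_0^{(i)}-2>b_1^{(i)}\geq 2$. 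In case~\ref{cond:fossati}, the same computation gives $L(q_i,-p_i)\cong L(b_0^{(i)}b_1^{(i)}-1,\,b_1^{(i)})$, all of whose tight contact structures have unique exact fillings by Fossati~\cite{fossati2019contact} once $b_0^{(i)},b_1^{(i)}\geq 5$. In each case the conclusion is insensitive to which tight structure $\xi_i$ the decomposition produces, hence to which thoroughly mixed $\xi$ was chosen; combining with the previous paragraph, every exact filling of $(M,\xi)$ is symplectomorphic to the one built from the distinguished lens-space fillings by the prescribed round handle attachments, which gives uniqueness.

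I expect the main obstacle to be verifying that the cited lens-space classifications are genuinely exhaustive over the tight contact structures $\xi_i$ that can occur --- that is, confirming that the results of Kaloti and Fossati cover \emph{every} tight contact structure on the relevant $L(b_0^{(i)}b_1^{(i)}\pm 1,\,b_1^{(i)})$ rather than only a distinguished one --- together with the routine but necessary bookkeeping that round symplectic $1$-handle attachment along a fixed Legendrian region transports symplectomorphism type. The continued-fraction arithmetic behind \ref{cond:kaloti}--\ref{cond:fossati} is a short computation, so the substantive content of the corollary is carried by the structural reduction supplied by Theorem~\ref{thm:thoroughly-mixed}.
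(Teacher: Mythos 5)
Your overall strategy is exactly the paper's: feed an arbitrary exact filling through Theorem~\ref{thm:thoroughly-mixed} and reduce to uniqueness of exact fillings of the lens spaces $(L(q_i,-p_i),\xi_i)$, then identify those lens spaces as $L(b_0^{(i)}b_1^{(i)}\pm 1, b_1^{(i)})$ (or $L(q_i,1)$, $L(3,2)$ in case~\ref{cond:small-q}) and quote the literature. Case~\ref{cond:small-q} is fine as you have it, modulo the remark that for $q_i=3$ the space may be $L(3,2)$ rather than $L(3,1)$; both have all their tight structures uniquely fillable, so nothing is lost.

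The gap is the one you yourself flag at the end, and it is not merely a citation-checking chore: Kaloti's Theorem~1.10 and Fossati's Theorem~1 classify fillings of the \emph{virtually overtwisted} structures on $L(b_0b_1+1,b_1)$ and $L(b_0b_1-1,b_1)$ respectively, so they do not cover the universally tight structures $\xi_i$ that the decomposition can produce. For those you must invoke Lisca's classification, and Lisca's theorem does not automatically give uniqueness --- it gives a list of fillings indexed by admissible tuples attached to the continued fraction expansion of $p/(p-q)$, and universally tight lens spaces can have several exact fillings (e.g.\ $L(4,1)$). The paper therefore computes
\[
\frac{b_0b_1+1}{b_0b_1+1-b_1}=[2,\ldots,2,b_1+1]
\qquad\text{and}\qquad
\frac{b_0b_1-1}{b_0b_1-1-b_1}=[2,\ldots,2,3,2,\ldots,2],
\]
and checks that in each case the only admissible tuple is $(1,2,\ldots,2,1)$, so the universally tight structures have the unique filling $W_{p,q}((1,2,\ldots,2,1))$; the hypotheses $b_0^{(i)}-2>b_1^{(i)}\geq 2$ and $b_0^{(i)},b_1^{(i)}\geq 5$ are exactly what make these expansions (and the corresponding applications of Kaloti and Fossati) go through. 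Without this step your argument covers only the virtually overtwisted $\xi_i$, so you should carry out the $p/(p-q)$ computation rather than leave it as an anticipated obstacle.
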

\begin{proof}
Theorem \ref{thm:thoroughly-mixed} provides a recipe for constructing any exact filling of $(M,\xi)$ from fillings of $(L(q_i,-p_i),\xi_i)$, so this is simply a matter of observing that if any of these conditions hold, then $(L(q_i,-p_i),\xi_i)$ is uniquely fillable.  If condition \ref{cond:small-q} holds, then either $L(q_i,-p_i)=L(q_i,1)$ or $L(q_i,-p_i)=L(3,2)$.  In either case, the tight contact structures on $L(q_i,-p_i)$ are all uniquely fillable by work of Eliashberg \cite{eliashberg1990filling}, McDuff \cite{mcduff1990structure}, and Plamenevskaya--Van Horn-Morris \cite{plamenevskaya2010planar}.  When condition \ref{cond:kaloti} holds, we are considering tight contact structures on $L(b_0^{(i)}b_1^{(i)}+1,b_1^{(i)})$.  Universally tight structures on such a lens space were shown to be uniquely fillable by Lisca \cite{lisca2008symplectic}.  In particular, we have $p=b_0^{(i)}b_1^{(i)}+1$ and $q=b_1^{(i)}$, so
\[
\dfrac{p}{p-q} = \dfrac{b_0^{(i)}b_1^{(i)}+1}{b_0^{(i)}b_1^{(i)}+1-b_1^{(i)}} = [2,\ldots,2,b_{1}^{(i)}+1],
\]
where the number of copies of $2$ at the start of this continued fraction is $b_0^{(i)}$.  In Lisca's notation, it follows that the unique exact symplectic filling of $L(p,q)$ is $W_{p,q}((1,2,\ldots,2,1))$.  The virtually overtwisted structures on $L(b_0^{(i)}b_1^{(i)}+1,b_1^{(i)})$ are uniquely fillable according to work of Kaloti \cite[Theorem 1.10]{kaloti2013stein}.  Similarly, condition \ref{cond:fossati} produces lens spaces of the form $L(b_0^{(i)}b_1^{(i)}-1,b_1^{(i)})$, the fillings of which are known to be unique by work of Lisca \cite{lisca2008symplectic} in the universally tight case and Fossati \cite[Theorem 1]{fossati2019contact} in the virtually overtwisted case.  The relevant continued fraction for applying Lisca's work to these lens spaces is
\[
\dfrac{b_0^{(i)}b_1^{(i)}-1}{b_0^{(i)}b_1^{(i)}-1-b_1^{(i)}} = [2,\ldots,2,3,2,\ldots,2],
\]
which begins with $b_0^{(i)}-2$ copies of 2 and ends with $b_1^{(i)}-2$ copies.  Once again, the unique exact filling is given by $W_{p,q}((1,2,\ldots,2,1))$ in Lisca's notation.  The observation that
\[
\dfrac{b_0^{(i)}b_1^{(i)}-1}{b_1^{(i)}} = [b_0^{(i)},b_1^{(i)}]
\]
allows us to apply Fossati's result.
\end{proof}

Another class of contact structures on Seifert fibered spaces which admit abundant mixed tori are those which are \emph{lightly mixed}.

\begin{definition}
Let $\xi$ be a tight contact structure on $M(\frac{q_1}{p_1},\cdots,\frac{q_n}{p_n})$.  We will call $\xi$ \emph{lightly mixed} if $\xi$ is not thoroughly mixed, but admits a Stein filling as in Figure~\ref{fig:seifert-filling} for which at least $n-2$ of $K_1,\ldots,K_n$ have been stabilized both positively and negatively.  We say that $\xi$ is \emph{lightly mixed about $K_i$ and $K_j$} to indicate that $\xi$ admits a Stein filling for which each of $K_1,\ldots,K_n$ except $K_i$ and $K_j$ have been stabilized positively and negatively.
\end{definition}

Like their thoroughly mixed counterparts, exact symplectic fillings of lightly mixed contact structures on Seifert fibered spaces may also be decomposed into lens space fillings, though one of the lens spaces will have a slightly more complicated expression.

\begin{theorem}\label{thm:lightly-mixed}
Let $\xi$ be a tight contact structure on the Seifert fibered space $M=M(\frac{q_1}{p_1},\cdots,\frac{q_n}{p_n})$, for some coprime positive integers $q_i,p_i$ with $q_i<p_i$ for $1\leq i \leq n-1$ and $p_i\geq 2$ for $1\leq i \leq n$.  Let each $-p_i/q_i$ have continued fraction as above.  Suppose that $\xi$ is lightly mixed about $K_i$ and $K_j$, and let
\[
-\frac{p'}{q'} = [a_{l_{i}}^{i},\ldots,a_1^{i},a_0^{i}+a_0^{j},a_1^{j},\ldots,a_{l_{j}}^{j}].
\]
Then there exist
\begin{enumerate*}[label=(\arabic*)]
	\item a tight contact structure $\xi_k$ on $L(q_k,-p_k)$, for each $k\neq i,j$;
	\item a tight contact structure $\zeta'$ on $L(p',q')$;
	\item Legendrian knots $K'_k$ in $\mathop{\#}_{k\neq i,j} (L(q_k,-p_k),\xi_k) \# (L(p',q'),\zeta')$ for $k\neq i,j$,
\end{enumerate*}
such that every exact symplectic filling of $(M,\xi)$ is obtained from an exact symplectic filling of
\[
\mathop{\#}\limits_{k\neq i,j} (L(q_k,-p_k),\xi_k) \# (L(p',q'),\zeta')
\]
by attaching a Weinstein 2-handle along each $K'_k$, $k\neq i,j$.
\end{theorem}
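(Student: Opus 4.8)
The plan is to apply Theorem~\ref{thm:menke-knot} once for each of the $n-2$ base knots $K_k$ with $k\neq i,j$, and then to identify the contact manifold that remains by a short Kirby-calculus computation. First, fix a Stein handlebody for $(M,\xi)$ as in Figure~\ref{fig:seifert-filling} realizing the hypothesis that $\xi$ is lightly mixed about $K_i$ and $K_j$, so that for each $k\neq i,j$ the Legendrian knot $K_k$ has been stabilized both positively and negatively; write $K_k=S_+(S_-(L_k))$ for a Legendrian $L_k$ obtained by undoing one stabilization of each sign, with all stabilizations of $K_k$ carried out inside a small ball disjoint from the other attaching curves. Enumerate the indices distinct from $i$ and $j$ as $k_1,\dots,k_{n-2}$ and let $(M_r,\xi_r)$ be the contact manifold obtained from Figure~\ref{fig:seifert-filling} by deleting the $2$--handles attached along $K_{k_1},\dots,K_{k_r}$, so $(M_0,\xi_0)=(M,\xi)$. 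For each $r$, the contact manifold $(M_{r-1},\xi_{r-1})$ is contact $(-1)$--surgery on $(M_r,\xi_r)$ along the Legendrian knot $K_{k_r}$, which is still doubly stabilized in $(M_r,\xi_r)$ because deleting the earlier handles does not touch the localized stabilizations of $K_{k_r}$; thus $K_{k_r}=S_+(S_-(L_{k_r}))$ in $(M_r,\xi_r)$ and Theorem~\ref{thm:menke-knot} shows that every strong (exact) filling of $(M_{r-1},\xi_{r-1})$ is obtained from a strong (exact) filling of $(M_r,\xi_r)$ by attaching a Weinstein $2$--handle along a knot $K'_{k_r}\subset M_r$. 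Composing these $n-2$ applications, and observing that the knots $K'_k$ descend from the pairwise disjoint $K_k$ and so remain pairwise disjoint in $(M_{n-2},\xi_{n-2})$, we find that every strong (exact) filling of $(M,\xi)$ is obtained from a strong (exact) filling of $(M_{n-2},\xi_{n-2})$ by attaching a Weinstein $2$--handle along each $K'_k$, $k\neq i,j$.

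It will then remain to identify $(M_{n-2},\xi_{n-2})$. Deleting the $2$--handle along $K_k$ leaves the linear chain of unknots with framings $a_1^k,\dots,a_{l_k}^k$ unlinked from the rest of the diagram, so it splits off as a boundary-connected summand of the Stein domain; using the relation $-p_k/q_k=[a_0^k,a_1^k,\dots,a_{l_k}^k]$ an elementary continued-fraction computation shows that the chain $[a_1^k,\dots,a_{l_k}^k]$ presents $L(q_k,-p_k)$, and the Legendrian realizations inherited from our diagram endow it with a Stein-fillable, hence tight, contact structure $\xi_k$. Iterating over all $k\neq i,j$, we see that $(M_{n-2},\xi_{n-2})$ is the boundary of a Stein domain that is the boundary connected sum of these $n-2$ lens-space fillings with the Stein sub-handlebody $X$ built from the central $1$--handle together with the legs through $K_i$ and $K_j$; that is, $(M_{n-2},\xi_{n-2})=\bigl(\#_{k\neq i,j}(L(q_k,-p_k),\xi_k)\bigr)\#(\partial X,\zeta')$, where $\zeta'$ is the contact structure induced on $\partial X$. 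To finish, replacing the $1$--handle of $X$ by a $0$--framed unknot and absorbing it through the identity $[\dots,a,0,b,\dots]=[\dots,a+b,\dots]$ --- equivalently, a Legendrian handle slide of $K_j$ over $K_i$ followed by cancellation of the $1$--handle against $K_i$ --- turns the diagram of $X$ into a linear plumbing with weights $a_{l_i}^i,\dots,a_1^i,\ a_0^i+a_0^j,\ a_1^j,\dots,a_{l_j}^j$, so $\partial X=L(p',q')$ with $-p'/q'$ equal to the stated continued fraction; since $\zeta'$ bounds a Stein domain it is tight. Viewing each $K'_k$ as a knot in this connected sum and recalling that a Weinstein $2$--handle is in particular a symplectic $2$--handle, the first paragraph then yields the theorem.

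The step I expect to demand the most care is this last identification: confirming that each deletion genuinely splits off $L(q_k,-p_k)$ carrying the contact structure induced by the relevant sub-diagram, that the continued-fraction bookkeeping --- including the passage from the $1$--handle picture to the linear plumbing computing $\partial X$ --- is carried out correctly, and that the slide-and-cancel move is legitimate. By comparison, the iterated invocation of Theorem~\ref{thm:menke-knot} should be routine, resting only on the observations that removing a leg of the handlebody affects neither the doubly-stabilized structure of the remaining base knots nor the disjointness of the handle-attaching regions.
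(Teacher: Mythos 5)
Your proposal is correct and follows essentially the same route as the paper: iterated application of Theorem~\ref{thm:menke-knot} to the doubly-stabilized horizontal knots $K_k$, $k\neq i,j$, peeling off a connected summand $(L(q_k,-p_k),\xi_k)$ at each stage, followed by the handle slide of $K_j$ over $K_i$ and cancellation with the $1$--handle to identify the remaining two-leg piece as $(L(p',q'),\zeta')$ via the stated continued fraction. The points you flag as needing care (persistence of the double stabilizations after deleting earlier handles, and the slide-and-cancel identification) are exactly the ones the paper handles, in the same way.
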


As with Theorem \ref{thm:thoroughly-mixed}, we may use Theorem \ref{thm:lightly-mixed} and the classification of exact fillings of some lens spaces to identify families of tight Seifert fibered spaces whose exact fillings we may classify.  One example of such a family is given by the following corollary.

\begin{corollary}\label{cor:lightly}
Choose $p_1,p_2,p_3\geq 2$.  If $\xi$ is a tight contact structure on $M=M(\frac{1}{p_1},\frac{1}{p_2},\frac{1}{p_3})$ which is lightly mixed about $K_{i-1}$ and $K_{i+1}$ --- where subscripts are labeled modulo 3 --- and $p_{i-1}+p_{i+1}\neq 4$, then $(M,\xi)$ admits a unique exact filling, up to symplectomorphism.
\end{corollary}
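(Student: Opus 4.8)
The plan is to feed the hypothesis directly into Theorem~\ref{thm:lightly-mixed} and then quote the known classification of exact fillings of $L(m,1)$. Since $\xi$ is lightly mixed about $K_{i-1}$ and $K_{i+1}$, I would apply Theorem~\ref{thm:lightly-mixed} with the distinguished pair taken to be $\{K_{i-1},K_{i+1}\}$, so that $K_i$ is the single remaining knot. The essential simplification is that every $q_k=1$: the continued fraction of $-p_k/q_k=-p_k$ is the length-one expansion $[-p_k]$, so $l_k=0$ for $k=1,2,3$. I would then read off what the theorem produces. The lens space $L(q_i,-p_i)=L(1,-p_i)$ is simply $S^3$, and the continued fraction
\[
-\frac{p'}{q'}=[\,a_{l_{i-1}}^{i-1},\ldots,a_1^{i-1},\,a_0^{i-1}+a_0^{i+1},\,a_1^{i+1},\ldots,a_{l_{i+1}}^{i+1}\,]
\]
collapses, since both flanking blocks are empty, to $[\,-p_{i-1}-p_{i+1}\,]$; hence $(L(p',q'),\zeta')=(L(p_{i-1}+p_{i+1},1),\zeta')$ for some tight structure $\zeta'$. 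As connected sum with $(S^3,\xi_{\std})$ is trivial, Theorem~\ref{thm:lightly-mixed} then says that every exact symplectic filling of $(M,\xi)$ is obtained from an exact symplectic filling of $(L(p_{i-1}+p_{i+1},1),\zeta')$ by attaching one Weinstein $2$-handle along a fixed Legendrian knot $K'_i$.

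Next I would handle the lens space. Since $p_{i-1},p_{i+1}\geq 2$ we have $p_{i-1}+p_{i+1}\geq 4$, and the hypothesis $p_{i-1}+p_{i+1}\neq 4$ upgrades this to $p_{i-1}+p_{i+1}\geq 5$. For such $m$, every tight contact structure on $L(m,1)$ has a unique exact filling up to symplectomorphism: McDuff~\cite{mcduff1991symplectic} handles the universally tight case and Plamenevskaya--Van Horn-Morris~\cite{plamenevskaya2010planar} the virtually overtwisted case. (The excluded value $m=4$ --- that is, $p_{i-1}=p_{i+1}=2$ --- is genuine, since $(L(4,1),\xi_{\std})$ has two exact fillings.) So $(L(p_{i-1}+p_{i+1},1),\zeta')$ has a unique exact filling $(W_0,\omega_0)$, up to symplectomorphism.

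Finally, I would assemble the pieces: every exact filling of $(M,\xi)$ is symplectomorphic to the manifold built by attaching a Weinstein $2$-handle to $(W_0,\omega_0)$ along $K'_i$, and that manifold is independent of all choices because Weinstein handle attachment along a fixed Legendrian knot is canonical up to symplectomorphism. Hence $(M,\xi)$ has a unique exact filling up to symplectomorphism. I expect no serious difficulty here: once Theorem~\ref{thm:lightly-mixed} is available the corollary is essentially bookkeeping, and the only step warranting care is the last one --- that the $2$-handle attachment along $K'_i$ takes symplectomorphic fillings of $(L(p_{i-1}+p_{i+1},1),\zeta')$ to symplectomorphic fillings of $(M,\xi)$ --- which is the standard uniqueness statement for a Weinstein cobordism built from a single critical handle.
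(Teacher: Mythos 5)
Your proposal is correct and follows essentially the same route as the paper: apply Theorem \ref{thm:lightly-mixed}, observe that since every $q_k=1$ the output collapses to $S^3\#(L(p_{i-1}+p_{i+1},1),\zeta')$, and invoke McDuff and Plamenevskaya--Van Horn-Morris (using $p_{i-1}+p_{i+1}\neq 4$) to get unique fillability before the canonical Weinstein $2$-handle attachment. The only cosmetic difference is that the paper disposes of the $S^3$ summand via Eliashberg's connected-sum theorem rather than by noting that contact connected sum with $(S^3,\xi_{\std})$ is trivial; both are fine.
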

\begin{proof}
Applying Theorem \ref{thm:lightly-mixed} to such a filling leaves us with a filling of $S^3\# L(p_{i-1}+p_{i+1},1)$, with some tight contact structure.  By work of Eliashberg \cite{eliashberg1990filling} this filling must be the boundary connected sum of a filling of $(S^3,\xi_{\std})$ with a filling of $(L(p_{i-1}+p_{i+1},1),\zeta)$.  Because $p_{i-1}+p_{i+1}$ is not equal to 4, results of McDuff \cite{mcduff1990structure} and Plamenevskaya--Van Horn-Morris \cite{plamenevskaya2010planar} show that $(L(p_{i-1}+p_{i+1},1),\zeta)$ is uniquely fillable, as is the case for $(S^3,\xi_{\std})$.  So $(M,\xi)$ is uniquely fillable.
\end{proof}

We will see in Section~\ref{subsubsec:mixed-structures} that there are precisely six tight contact structures on $M(\frac{1}{p_1},\frac{1}{p_2},\frac{1}{p_3})$ which are neither lightly nor thoroughly mixed, and we will show that each of these structures is universally tight.  According to Corollaries \ref{cor:thorough} and \ref{cor:lightly}, these six are the only tight structures on $M(\frac{1}{p_1},\frac{1}{p_2},\frac{1}{p_3})$ which we cannot conclude have unique exact fillings if, say, $p_1,p_2,p_3\geq 3$.

\begin{corollary}\label{corollary:vot}
Choose integers $p_1,p_2,p_3\geq 2$, no two of which sum to 4.  If $\xi$ is a virtually overtwisted tight contact structure on $M=M(\frac{1}{p_1},\frac{1}{p_2},\frac{1}{p_3})$, then $(M,\xi)$ admits a unique exact filling $(W,\omega)$, up to symplectomorphism.  Moreover, $W$ is simply connected, and has $H_2(W)=\mathbb{Z}^2$.
\end{corollary}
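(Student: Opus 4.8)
The plan is to combine Corollaries~\ref{cor:thorough} and~\ref{cor:lightly} with the classification of tight contact structures on $M(\frac1{p_1},\frac1{p_2},\frac1{p_3})$ carried out in Section~\ref{sec:mixed-structures}. The first step is to recall from that section that the only tight contact structures on $M(\frac1{p_1},\frac1{p_2},\frac1{p_3})$ which are neither thoroughly nor lightly mixed are the six universally tight structures identified there. Consequently, a \emph{virtually overtwisted} tight contact structure $\xi$ on $M$ must be either thoroughly mixed or lightly mixed about some pair $K_a,K_b$ of singular fibers. If $\xi$ is thoroughly mixed, then since $q_1=q_2=q_3=1\in\{1,2,3\}$, condition~\ref{cond:small-q} of Corollary~\ref{cor:thorough} applies and $(M,\xi)$ has a unique exact filling up to symplectomorphism. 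If instead $\xi$ is lightly mixed about $K_a$ and $K_b$, I would write $\{a,b\}=\{i-1,i+1\}$ for the remaining index $i$, taken modulo $3$; the hypothesis that no two of $p_1,p_2,p_3$ sum to $4$ guarantees $p_{i-1}+p_{i+1}\neq 4$, so Corollary~\ref{cor:lightly} applies and again $(M,\xi)$ has a unique exact filling up to symplectomorphism. This establishes the uniqueness half of the statement.

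It remains to pin down the diffeomorphism type of this filling. The construction preceding Theorem~\ref{thm:thoroughly-mixed} realizes $(M,\xi)$ as the contact boundary of a Stein domain $W_0$ with the handlebody diagram of Figure~\ref{fig:seifert-filling}, so by the uniqueness just proved, any exact filling $(W,\omega)$ of $(M,\xi)$ is symplectomorphic --- in particular diffeomorphic --- to $W_0$. I would then read off $\pi_1(W_0)$ and $H_*(W_0)$ directly from the diagram. Since $-p_i/q_i=-p_i=[-p_i]$ for each $i$, every continued fraction in~\eqref{eq:continued-fraction} has length one, so the chains of small unknots in Figure~\ref{fig:seifert-filling} are absent and the diagram reduces to a single $1$-handle together with the three $2$-handles $K_1,K_2,K_3$, each running once over the $1$-handle. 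Thus $W_0$ is built from one $0$-handle, one $1$-handle, and three $2$-handles, so $\chi(W_0)=1-1+3=3$; because $K_1$ passes over the $1$-handle exactly once, the corresponding $2$-handle kills the $\pi_1$-generator coming from the $1$-handle, and hence $W_0$ is simply connected. Finally, $W_0$ has no handles of index greater than two, so $H_2(W_0)$ is free; combined with $\pi_1(W_0)=1$ and $\chi(W_0)=3$ this forces $H_2(W_0)\cong\mathbb{Z}^2$.

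The genuine obstacle in this argument lies upstream, in Section~\ref{sec:mixed-structures}: one must show that the six tight structures on $M(\frac1{p_1},\frac1{p_2},\frac1{p_3})$ that evade being thoroughly or lightly mixed are precisely the universally tight ones, so that \emph{virtually overtwisted} really does force the existence of a mixed torus. Granting that, the only points inside this proof that need care are the index bookkeeping in the lightly mixed case --- verifying that the pair about which $\xi$ is lightly mixed is exactly the pair of $p$-values constrained by Corollary~\ref{cor:lightly} --- and checking that the Stein filling of Figure~\ref{fig:seifert-filling} collapses, in this special case, to the three-$2$-handle picture used in the topological computation.
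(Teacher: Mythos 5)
Your proposal is correct and follows essentially the same route as the paper: uniqueness comes from combining Corollaries~\ref{cor:thorough} and~\ref{cor:lightly} with the Section~\ref{sec:mixed-structures} fact that the non-mixed structures on $M(\frac1{p_1},\frac1{p_2},\frac1{p_3})$ are exactly the universally tight ones, and the topology is read off from the handlebody diagram of Figure~\ref{fig:seifert-filling} with one $1$-handle and three $2$-handles. The only cosmetic difference is that the paper computes $\pi_1$ and $H_2$ by handlesliding $K_2,K_3$ over $K_1$ and cancelling the $1$-handle, whereas you use the presentation of $\pi_1$ plus an Euler characteristic count; these are equivalent.
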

\begin{proof}
The uniqueness of the filling $(W,\omega)$ follows from Corollaries \ref{cor:thorough} and \ref{cor:lightly}.  To see that $W$ is simply connected and has $H_2(W)=\mathbb{Z}^2$, consider the handlebody diagram for $W$ given by Figure 2.  This diagram consists of a single 1-handle, with three 2-handles attached along parallel knots $K_1,K_2,K_3$ which pass over the 1-handle.  We may handleslide $K_2$ and $K_3$ over $K_1$ and then cancel $K_1$ with the 1-handle to obtain a handlebody diagram for $W$ which consists of two 2-handles attached to a 0-handle.  Such a handlebody is simply connected, with $H_2(W)=\mathbb{Z}^2$.
\end{proof}

Wrapping up loose ends, we have the following result.  In this result, we refer to a \emph{horizontal link} in Figure~\ref{fig:seifert-filling}, which consists of some $K_i$ along with the attached chain of Legendrian unknots.

\begin{theorem}\label{thm:not-mixed}
Let $\xi$ be a tight contact structure on a small Seifert fibered space $M$, with surgery diagram as in Figure \ref{fig:seifert-filling}.  If any of the horizontal links have both positive and negative stabilizations, then every exact symplectic filling of $(M,\xi)$ can be obtained from a disjoint union of a filling of a universally tight small Seifert fibered space, along with fillings of universally tight lens spaces, by attaching a sequence of round symplectic 1-handles.
\end{theorem}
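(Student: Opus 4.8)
The plan is to apply Menke's JSJ decomposition to the torus bounding a tubular neighborhood of the singular fiber corresponding to a mixed horizontal link, peeling off a lens space at each stage, and then to invoke Theorem~\ref{thm:lens-space-fillings} to dispose of any virtually overtwisted lens spaces that appear. (When two or more, respectively exactly one, of the horizontal links is mixed --- that is, $\xi$ thoroughly or lightly mixed --- one may instead read the decomposition directly off Theorems~\ref{thm:thoroughly-mixed} and~\ref{thm:lightly-mixed}, which land one at lens spaces, and then apply Theorem~\ref{thm:lens-space-fillings}; the argument below is the self-contained version.)

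So suppose $K_i$ is a horizontal link in the presentation of Figure~\ref{fig:seifert-filling} that has been stabilized with both signs, and let $F_i$ be the corresponding singular fiber. The first step is to produce a mixed torus: because $K_i$ carries stabilizations of both signs, the contact structure near $F_i$ is layered so that $T_i:=\partial\nu(F_i)$ lies in a virtually overtwisted $T^2\times[0,2]$ with $T_i=T^2\times\{1\}$ and a basic slice of each sign on either side of it; equivalently, writing $K_i$ as $S_+(S_-(L))$ presents $(M,\xi)$ as contact surgery along $S_+(S_-(L))$, and $T_i$ is the torus to which Menke's proof of Theorem~\ref{thm:menke-knot} applies. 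Normalizing this embedding --- identifying $T_i$ with $\mathbb{R}^2/\mathbb{Z}^2$ so that the relevant slopes become $-1$ and $\infty$, and reading $s_2$ off the continued fraction data attached to $K_i$ --- is carried out in Section~\ref{sec:mixed-structures}.

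Granting this, Theorem~\ref{thm:jsj} produces every strong filling of $(M,\xi)$ from a strong filling of $(M',\xi')$ by a round symplectic $1$-handle attachment, where $(M',\xi')$ is $(M,\xi)$ split along $T_i$ with some integer slope $0\le s\le s_2-1$. Cutting $M$ along $T_i=\partial\nu(F_i)$ separates it into the fibered solid torus $\nu(F_i)$ and the piece $M\setminus\nu(F_i)$, which is Seifert fibered over a disk with the remaining $n-1$ singular fibers, so $(M',\xi')$ is disconnected: capping $\nu(F_i)$ by a solid torus yields a tight lens space $(L_i,\eta_i)$, while capping $M\setminus\nu(F_i)$ by a solid torus yields a space Seifert fibered over $S^2$ with at most three singular fibers --- a small Seifert fibered space $(\widetilde M,\widetilde\xi)$, degenerating to a lens space exactly when $s$ introduces no new exceptional fiber. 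If $\widetilde\xi$ is virtually overtwisted, a neighborhood of one of its singular fibers is again bounded by a mixed torus and we iterate; since each pass strictly decreases the total chain length $\sum_{i,j}|a_j^i|$, the iteration terminates. We are then left with a presentation of every strong filling of $(M,\xi)$ as a sequence of round symplectic $1$-handle attachments to a disjoint union of a strong filling of a universally tight small Seifert fibered space and strong fillings of tight lens spaces; applying Theorem~\ref{thm:lens-space-fillings} to each virtually overtwisted lens space (and Theorem~\ref{thm:connected-sum} to split the resulting connected sums) replaces it by universally tight lens spaces, which gives the statement.

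The step I expect to be the main obstacle is the inductive claim buried in ``a neighborhood of one of its singular fibers is again bounded by a mixed torus'': one must show that \emph{every} virtually overtwisted tight structure met in the iteration --- not merely the one we start with --- carries a normalized mixed torus of the required form around some singular fiber, and hence that the process bottoms out at a genuinely \emph{universally tight} small Seifert fibered space rather than at a virtually overtwisted structure whose sign change is invisible at the level of the singular fibers. This is exactly where the Ghiggini--Lisca--Stipsicz~\cite{ghiggini2006classification} and Wu~\cite{wu2004tight} classifications of tight contact structures on small Seifert fibered spaces must be married to the combinatorics of how the stabilization data of Figure~\ref{fig:seifert-filling} transforms under splitting --- the analysis of Section~\ref{sec:mixed-structures}.
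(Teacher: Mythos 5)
There is a genuine gap, and it sits exactly at the step you flagged as routine: the identification of the mixed torus. The hypothesis of the theorem is that some \emph{horizontal link} --- the whole chain $a_0^i,a_1^i,\ldots,a_{l_i}^i$ --- carries stabilizations of both signs. Your argument silently strengthens this to the assumption that a single knot $K_i$ carries both signs (``writing $K_i$ as $S_+(S_-(L))$\dots''), which is the case already dispatched by Theorem~\ref{thm:menke-knot}. In the remaining case the two signs live on \emph{different} knots of the chain, each of which is stabilized with only one sign, and then $\partial\nu(F_i)$ is in general \emph{not} a mixed torus: a mixed torus must have basic slices of opposite sign immediately adjacent on its two sides, whereas the basic slices adjacent to $\partial\nu(F_i)$ are governed by the stabilizations of the knots nearest that torus, which may all share a sign even though a sign change occurs deeper in the chain. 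The correct mixed torus is the interface $\widehat{T}_i$ between the two adjacent continued-fraction blocks of opposite sign inside the fibered solid torus --- this is what the paper uses. Splitting there does not separate $\nu(F_i)$ from its complement; it truncates the $i$-th leg at the sign change, so the two pieces are a lens space built from the outer portion of the chain and a small Seifert fibered space whose $i$-th leg now has stabilizations of a single sign. Your proposed decomposition (lens space $=\nu(F_i)$ capped off, small SFS $=$ complement capped off) is therefore not what Theorem~\ref{thm:jsj} produces.

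This mislocation also explains why your termination worry has no resolution in your framework: after cutting out $\nu(F_i)$ you have no control over whether the residual small Seifert piece is universally tight, and iterating on singular-fiber neighborhoods cannot supply it. The paper's choice of tori resolves this cleanly: splitting simultaneously along the sign-change tori $\widehat{T}_1,\widehat{T}_2,\widehat{T}_3$ (those that exist) leaves a small Seifert fibered space in which every horizontal link, and each of $K_1,K_2,K_3'$, has stabilizations of a single sign, so it is universally tight by Lemma~\ref{lemma:ut}; the excised pieces are lens spaces, to which Theorem~\ref{thm:lens-space-fillings} is then applied exactly as you propose. (Your closing observation that the thoroughly and lightly mixed cases follow from Theorems~\ref{thm:thoroughly-mixed} and~\ref{thm:lightly-mixed} does agree with the paper.) To repair your write-up, replace the torus $\partial\nu(F_i)$ by $\widehat{T}_i$ and redo the identification of the two resulting pieces accordingly; no iteration is then needed.
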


\subsubsection{The case $e_0\leq -3$}
Finally, we discuss Seifert fibered spaces over $S^2$ with Euler number $e_0\leq -3$.  In particular, we consider $M=M(-\frac{q_1}{p_1},\cdots,-\frac{q_n}{p_n})$ with $p_i\geq 2$, $q_i\geq 1$, and $(p_i,q_i)=1$ for $i=1,\ldots,n$.  The Euler number is then given by
\[
e_0 = \sum_{i=1}^n \left\lfloor-\frac{q_i}{p_i}\right\rfloor\leq -n.
\]
We have continued fraction expansions
\[
-\frac{q_i}{p_i} = [a_0^i,\ldots,a_{l_i}^i],
\]
for some uniquely determined integers satisfying $a_0^i=-(\lfloor\frac{q_i}{p_i}\rfloor+1)$ and $a_j^i\leq -2$ for $j\geq 1$.  Then $M$ admits a surgery diagram as in Figure \ref{fig:negative-euler}.  Notice that $e_0=\Sigma_{i=1}^n a_0^i$.\\

\begin{figure}
\centering
\begin{tikzpicture}[scale=0.8]
\begin{knot}[
	clip width=5,
	clip radius=2pt,
	ignore endpoint intersections=false,
	flip crossing/.list={2,3,6,7,10,11}]
\strand (0,0) circle (1.75);
\strand (-1.95,-.71) circle (1);
\strand (1.95,-.71) circle (1);
\strand (-3.15,-2.42) arc (-90:145:1);
\strand (3.15,-2.42) arc (270:35:1);
\strand (-5.55,-1.84) arc (90:325:1);
\strand (5.55,-1.84) arc (90:-140:1);
\strand (-6.75,-3.55) circle (1);
\strand (6.75,-3.55) circle (1);
\end{knot}

\filldraw[color=black, fill=black](0,-4) circle (0.1);
\filldraw[color=black, fill=black](-1.25,-3.65) circle (0.1);
\filldraw[color=black, fill=black](1.25,-3.65) circle (0.1);

\filldraw[color=black, fill=black](-4.35,-2.13) circle (0.05);
\filldraw[color=black, fill=black](-4.95,-2.48) circle (0.05);
\filldraw[color=black, fill=black](-3.75,-1.77) circle (0.05);

\filldraw[color=black, fill=black](4.35,-2.13) circle (0.05);
\filldraw[color=black, fill=black](4.95,-2.48) circle (0.05);
\filldraw[color=black, fill=black](3.75,-1.77) circle (0.05);

\node[above] at (0,1.75) {$e_0$};
\node[above] at (-2,0.29) {$a_1^1$};
\node[above] at (2,0.29) {$a_1^n$};
\node[above] at (-3.2,-0.42) {$a_2^1$};
\node[above] at (3.2,-0.42) {$a_2^n$};
\node[above] at (-5.6,-1.84) {$a_{\ell_1-1}^1$};
\node[above] at (5.6,-1.84) {$a_{\ell_n-1}^n$};
\node[above] at (-6.8,-2.55) {$a_{\ell_1}^1$};
\node[above] at (6.8,-2.55) {$a_{\ell_n}^n$};

\node[below right] at (-2,-1.71) {$K_1^1$};
\node[below left] at (2,-1.71) {$K_1^n$};
\node[below right] at (-3.2,-2.42) {$K_2^1$};
\node[below left] at (3.2,-2.42) {$K_2^n$};
\node[below right] at (-5.6,-3.84) {$K_{\ell_1-1}^1$};
\node[below left] at (5.6,-3.84) {$K_{\ell_n-1}^n$};
\node[below right] at (-6.8,-4.55) {$K_{\ell_1}^1$};
\node[below left] at (6.8,-4.55) {$K_{\ell_n}^n$};
\end{tikzpicture}
\caption{A surgery diagram for $M(-\frac{q_1}{p_1},\cdots,-\frac{q_n}{p_n})$, $e_0\leq -3$.}
\label{fig:negative-euler}
\end{figure}

We may construct contact structures on $M$ by putting the knots in Figure \ref{fig:negative-euler} into Legendrian position and stabilizing until the framing coefficient becomes $-1$ with respect to the contact framing.  We see that there are
\[
\left\vert(e_0+1)\prod_{i=1}^n\prod_{j=1}^{\ell_i}(a_j^i+1)\right\vert
\]
choices for these stabilizations, and in case we have a small Seifert fibered space, Wu shows in \cite{wu2004tight} that each such choice leads to a distinct contact structure up to isotopy, and indeed all contact structures on $M$ can be constructed in this way.

\begin{definition}
Let $M=M(-\frac{q_1}{p_1},\cdots,-\frac{q_n}{p_n})$ be as above, and construct a tight contact structure $\xi$ on $M$ by putting the knots of Figure \ref{fig:negative-euler} into Legendrian position.  We say that $(M,\xi)$ is \emph{centrally mixed} if the central knot of Figure \ref{fig:negative-euler} is stabilized both positively and negatively.
\end{definition}

\begin{remark}
Notice that if $(M,\xi)$ is centrally mixed, then $e_0\leq -4$.
\end{remark}

\begin{figure}
\centering
\begin{tikzpicture}[scale=1]
\filldraw[color=black, fill=black](0,0) circle (0.1);

\draw (0,0) -- (1,1);
\draw (0,0) -- (1,0);
\draw (0,0) -- (1,-1);

\filldraw[color=black, fill=black](1,1) circle (0.1);
\filldraw[color=black, fill=black](2,1) circle (0.1);\filldraw[color=black, fill=black](3.25,1) circle (0.05);
\filldraw[color=black, fill=black](3.5,1) circle (0.05);
\filldraw[color=black, fill=black](3.75,1) circle (0.05);
\filldraw[color=black, fill=black](5,1) circle (0.1);

\draw (1,1) -- (3,1);
\draw (4,1) -- (5,1);

\filldraw[color=black, fill=black](1.5,0.3) circle (0.05);
\filldraw[color=black, fill=black](1.5,0) circle (0.05);
\filldraw[color=black, fill=black](1.5,-0.3) circle (0.05);

\filldraw[color=black, fill=black](1,-1) circle (0.1);
\filldraw[color=black, fill=black](2,-1) circle (0.1);\filldraw[color=black, fill=black](3.25,-1) circle (0.05);
\filldraw[color=black, fill=black](3.5,-1) circle (0.05);
\filldraw[color=black, fill=black](3.75,-1) circle (0.05);
\filldraw[color=black, fill=black](5,-1) circle (0.1);

\draw (1,-1) -- (3,-1);
\draw (4,-1) -- (5,-1);

\filldraw[color=black, fill=black](4.5,0.3) circle (0.05);
\filldraw[color=black, fill=black](4.5,0) circle (0.05);
\filldraw[color=black, fill=black](4.5,-0.3) circle (0.05);

\node[left] at (0,0) {$e_0$};

\node[below] at (1,-1) {$a_1^1$};
\node[below] at (2,-1) {$a_2^1$};
\node[below] at (5,-1) {$a_{\ell_1}^1$};

\node[above] at (1,1) {$a_1^n$};
\node[above] at (2,1) {$a_2^n$};
\node[above] at (5,1) {$a_{\ell_n}^n$};
\end{tikzpicture}
\caption{The plumbing graph associated to the surgery diagram in Figure \ref{fig:negative-euler}.}
\label{fig:plumbing-graph}
\end{figure}

Our Seifert fibered space $M$ admits a canonical contact structure as the boundary of a plumbing 4-manifold.  The 4-manifold is a plumbing of disc bundles of 2-spheres, with plumbing graph as in Figure \ref{fig:plumbing-graph}.  Each node of the graph corresponds to a symplectic 2-sphere, and each edge represents an orthogonal intersection between them; in this way we produce a symplectic structure on the plumbing 4-manifold, and a canonical contact structure on its boundary $M$.  The fillings of this canonical contact structure were studied by Starkston \cite{starkston2015symplectic} and Choi-Park \cite{choi2019symplectic}, each under some additional assumptions on $M$.  Starkston provided topological restrictions on the strong symplectic fillings of \emph{dually positive} Seifert fibered spaces over $S^2$.  In some cases, these restrictions produce classifications up to diffeomorphism of minimal strong symplectic fillings.  Choi-Park classified all minimal symplectic fillings of small Seifert 3-manifolds $M(-\frac{q_1}{p_1},-\frac{q_2}{p_2},-\frac{q_3}{p_3})$, with $e_0\leq -4$ and with the canonical contact structure.  An infinite family of Seifert fibered spaces with canonical contact structure and $e_0=-3$ also saw their fillings classified by Sch\"{o}nenberger in \cite[Theorem 4.4]{schonenberger2007determining}.\\

We point out that the canonical contact structure is not centrally mixed.  Indeed, the Legendrian surgery diagram for the canonical contact structure has all of its stabilizations of a single sign.

\begin{proposition}\label{prop:canonical-contact-structure}
If $M=M(-\frac{q_1}{p_1},\cdots,-\frac{q_n}{p_n})$ has canonical contact structure $\xi$ as described above, and Legendrian surgery diagram as in Figure \ref{fig:negative-euler}, then all the stabilizations in the Legendrian surgery diagram are of a single sign.
\end{proposition}
\begin{proof}
As described above, there is a symplectic 4-manifold $(W,\omega)$, obtained by plumbing disc bundles of 2-spheres, which fills $(M,\xi)$.  For each symplectic sphere $S_i^j$, the adjunction formula takes the form
\[
\langle c_1(W),[S_i^j]\rangle = [S_i^j]\cdot[S_i^j]+2.
\]
At the same time, $S_i^j$ corresponds to surgery along the Legendrian knot $K_i^j$ in Figure \ref{fig:negative-euler}, and thus
\[
[S_i^j]\cdot[S_i^j] = \mathrm{fr}(K_i^j) = \mathrm{tb}(K_i^j)-1.
\]
So $\langle c_1(W),[S_i^j]\rangle=\mathrm{tb}(K_i^j)+1$.  Finally, \cite[Proposition 2.3]{gompf1998handlebody} allows us to compute the rotation number of $K_i^j$:
\[
\mathrm{rot}(K_i^j) = \langle c_1(W),[S_i^j]\rangle = \mathrm{tb}(K_i^j)+1.
\]
This rotation number can only be obtained by taking every stabilization to be negative.  Note that taking all stabilizations to be positive gives a contactomorphic (though not isotopic) contact structure.
\end{proof}

A straightforward consequence of the definition of centrally mixed and Theorem \ref{thm:menke-knot} is the following.

\begin{proposition}\label{prop:centrally-mixed}
Let $M=M(-\frac{q_1}{p_1},\ldots,-\frac{q_n}{p_n})$, with $p_i\geq 2$, $q_i\geq 1$, with $n\geq 3$.  If $\xi$ is a centrally mixed tight contact structure on $M$, then every exact symplectic filling of $(M,\xi)$ may be obtained from an exact symplectic filling of
\[
\mathop{\#}\limits_{i=1}^n (L(p_i',q_i'),\xi_i)
\]
by attaching a Weinstein 2-handle in a specified manner, where $-\frac{p_i'}{q_i'} = [a_1^i,\ldots,a_{l_i}^i]$ and $\xi_i$ is a tight contact structure determined by $\xi$.
\end{proposition}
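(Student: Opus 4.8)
The plan is to realize the central surgery of Figure~\ref{fig:negative-euler} as contact $(-1)$-surgery along a doubly stabilized Legendrian knot and then to invoke Theorem~\ref{thm:menke-knot}. First I let $(M_0,\xi_0)$ be the contact manifold obtained from $(S^3,\xi_{\std})$ by performing Legendrian surgery along the arm knots $K_j^i$, $1\le j\le l_i$, $1\le i\le n$, only --- that is, by deleting the central $2$-handle from the Stein handlebody that underlies $(M,\xi)$. Since contact surgeries along a fixed disjoint collection of Legendrian knots can be performed in any order, $(M,\xi)$ is recovered from $(M_0,\xi_0)$ by contact $(-1)$-surgery along the central knot $K$, now regarded as a Legendrian knot in $(M_0,\xi_0)$. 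Because $(M,\xi)$ is centrally mixed, $K$ carries at least one positive and at least one negative stabilization, so deleting one stabilization of each sign writes $K=S_+(S_-(\bar K))$ for a Legendrian unknot $\bar K\subset(M_0,\xi_0)$ (this last point requires a small amount of care, discussed below).

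Next I identify $(M_0,\xi_0)$. Erasing the central knot from Figure~\ref{fig:negative-euler} leaves $n$ pairwise split linear chains of Legendrian unknots, the $i$-th of which is the standard Stein handlebody presentation of a tight contact structure $\xi_i$ on the lens space $L(p_i',q_i')$ with $-p_i'/q_i'=[a_1^i,\ldots,a_{l_i}^i]$; the stabilizations defining $\xi_i$ are exactly those prescribed by $\xi$. Since Legendrian surgery on a split link yields the contact connected sum of the surgeries on the pieces --- equivalently, the ambient Stein domain is the boundary connected sum of these linear plumbings --- we may identify $(M_0,\xi_0)$ with $\#_{i=1}^n(L(p_i',q_i'),\xi_i)$, and $S_+(S_-(\bar K))$ with a concrete Legendrian knot therein. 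Applying Theorem~\ref{thm:menke-knot} to the pair $\bigl((M_0,\xi_0),\bar K\bigr)$ then yields precisely the statement: every strong (respectively, exact) symplectic filling of $(M,\xi)$ is obtained from a strong (respectively, exact) symplectic filling of $\#_{i=1}^n(L(p_i',q_i'),\xi_i)$ by attaching a Weinstein --- in particular symplectic --- $2$-handle along $S_+(S_-(\bar K))$, which is the ``specified manner'' referred to in the proposition.

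The bulk of this is routine handle calculus --- recognizing the arm chains as the linear plumbings bounding lens spaces and keeping track of the continued fractions $[a_1^i,\ldots,a_{l_i}^i]$ --- and the hypothesis of being centrally mixed is exactly what produces the doubly stabilized knot (it also forces $e_0\le -4$). The one genuinely delicate point is the claim that $K$ remains of the form $S_+(S_-(\cdot))$ \emph{in $(M_0,\xi_0)$}, and not merely in $(S^3,\xi_{\std})$: a priori the arm surgeries, which link $K$, could interfere with its stabilization structure. I would handle this by choosing the Legendrian representative of the diagram so that the two stabilization cusps of $K$ to be removed lie inside a Darboux ball disjoint from every $K_j^i$; since the arm surgeries are then supported outside this ball, the local model exhibiting $K$ as $S_+(S_-(\bar K))$ is untouched, and Proposition~\ref{prop:centrally-mixed} follows as a direct application of Theorem~\ref{thm:menke-knot}.
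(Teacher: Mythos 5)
Your argument is exactly the one the paper intends: the paper offers Proposition~\ref{prop:centrally-mixed} as ``a straightforward consequence of the definition of centrally mixed and Theorem~\ref{thm:menke-knot},'' namely that erasing the central knot from Figure~\ref{fig:negative-euler} leaves the boundary connected sum of the linear plumbings bounding $(L(p_i',q_i'),\xi_i)$, and the centrally mixed hypothesis exhibits the central knot as $S_+(S_-(\bar K))$ there, so Theorem~\ref{thm:menke-knot} applies. Your write-up correctly fills in the details (order of surgeries, split link giving the connected sum, locality of the stabilizations), so it is the same proof, just made explicit.
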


More generally, we have the following result for any contact structures constructed from Figure \ref{fig:negative-euler}.

\begin{theorem}\label{thm:negative-euler}
Let $M=M(-\frac{q_1}{p_1},\cdots,-\frac{q_n}{p_n})$ be as above, and let $\xi$ be a tight contact structure on $M$ obtained by putting the knots of Figure \ref{fig:negative-euler} into Legendrian position.  Then every exact symplectic filling of $(M,\xi)$ can be obtained by attaching a sequence of round symplectic 1-handles to a disjoint union of fillings of universally tight lens spaces and a Seifert fibered space with canonical contact structure.
\end{theorem}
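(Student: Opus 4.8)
The plan is to peel universally tight lens space summands off the arms of the surgery diagram in Figure~\ref{fig:negative-euler}, using Theorem~\ref{thm:jsj} applied to mixed tori located at the ``sign changes'' within the arms, until the contact structure on the Seifert fibered piece that remains is forced to be the canonical one. The centrally mixed case is immediate from earlier results: Proposition~\ref{prop:centrally-mixed} expresses every strong filling of $(M,\xi)$ as a strong filling of $\#_{i=1}^n(L(p_i',q_i'),\xi_i)$, with $-p_i'/q_i'=[a_1^i,\dots,a_{l_i}^i]$, plus some symplectic $2$-handles; Theorem~\ref{thm:connected-sum} splits this connected sum into its summands, and any virtually overtwisted summand $(L(p_i',q_i'),\xi_i)$ has, by Theorem~\ref{thm:lens-space-fillings}, all of its fillings built from fillings of connected sums of universally tight lens spaces by further handle attachments. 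Using that a round symplectic $1$-handle attachment is a Weinstein $1$-handle attachment together with a Weinstein $2$-handle attachment passing over it, the Weinstein $1$- and $2$-handles produced above group into round symplectic $1$-handles, and the theorem follows in this case with the Seifert fibered piece taken to be $S^3$. So assume henceforth that $(M,\xi)$ is not centrally mixed.

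Realize $\xi$ by Legendrian knots, and, after reflecting the diagram if necessary, take the central knot of Figure~\ref{fig:negative-euler} to be stabilized entirely positively. Along each arm $i$, the Legendrian realization produces a chain of convex tori with slopes governed by $[a_1^i,\dots,a_{l_i}^i]$, with basic slices between consecutive tori whose signs are recorded by the stabilization choices; the same is true at the junction of the arm with the central region. If some arm $i$ carries a negative basic slice, then two adjacent basic slices there carry opposite signs, and --- the slopes along an arm tracing out a standard path in the Farey graph --- the $T^2\times[0,2]$ spanned by these two slices is virtually overtwisted, so the torus $T_i$ separating them (chosen at the innermost such sign change along arm $i$) is a mixed torus with the normalized embedding required by Theorem~\ref{thm:jsj}. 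Applying Theorem~\ref{thm:jsj} along $T_i$ produces a strong filling $(W',\omega')$, from which the given filling of $(M,\xi)$ is recovered by attaching a round symplectic $1$-handle, of a disjoint union consisting of a lens space --- the part of arm $i$ outside $T_i$ together with the singular fiber neighborhood, capped by a solid torus of some slope $0\le s\le s_2-1$ --- and a Seifert fibered space obtained by truncating arm $i$ at $T_i$ and capping, on which the surviving portion of arm $i$ consists of positive basic slices only. Since the arms other than $i$ are left untouched, this step may be carried out once for each arm bearing a negative basic slice; if a peeled-off lens space is virtually overtwisted it is then passed to Theorem~\ref{thm:lens-space-fillings} (and Theorem~\ref{thm:connected-sum}) to be reduced to fillings of universally tight lens spaces, with the resulting handle attachments again grouped into round symplectic $1$-handles as above.

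After at most $n$ such steps no arm carries a negative basic slice, so every basic slice in every arm and every stabilization of the central knot is positive; by the classification of tight contact structures on Seifert fibered spaces over $S^2$ (Wu~\cite{wu2004tight} and its extensions to additional singular fibers), the contact structure on the Seifert fibered space $\widehat M$ that survives is then the canonical one, and we have written an arbitrary strong filling of $(M,\xi)$ as the result of attaching a sequence of round symplectic $1$-handles to a disjoint union of strong fillings of universally tight lens spaces and a strong filling of $(\widehat M,\xi_{\mathrm{can}})$. The main obstacle is the bookkeeping forced by the slope ambiguity in Theorem~\ref{thm:jsj}: at each peel one must verify that the outer part of the arm together with its cap closes up to a lens space whose virtually overtwisted structures lie within the reach of Theorem~\ref{thm:lens-space-fillings}, and that the truncated-and-capped Seifert piece still carries a tight structure all of whose arm basic slices are positive, so that after all the peels one really does land on the canonical structure. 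A secondary point, present only when $n\ge3$, is that the neighborhood of the central fiber is not itself a thickened torus, so one must exhibit the mixed torus at an arm--central junction inside an honest $T^2\times[0,2]$ sitting in the arm; with that in hand the above goes through.
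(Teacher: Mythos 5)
Your strategy coincides with the paper's: locate mixed tori at the sign changes along each arm of Figure \ref{fig:negative-euler} (including at the arm--central junction), apply Theorem \ref{thm:jsj} to peel off lens space pieces, reduce any virtually overtwisted lens spaces via Theorem \ref{thm:lens-space-fillings}, and observe that the Seifert piece which survives carries the canonical structure; the centrally mixed case is dispatched exactly as you say via Proposition \ref{prop:centrally-mixed}. The problem is that the two issues you flag in your final paragraph and leave unresolved are precisely the technical content of the paper's proof (its Propositions \ref{prop:legs-have-single-sign} and \ref{prop:single-sign}), so as written your argument has genuine gaps at those points. For the slope ambiguity: the paper normalizes the mixed torus so that the outer slopes of the sandwiching basic slices are $-1$ and $m+2$, where $m$ is the number of unstabilized knots between the two opposite-signed ones; each admissible splitting slope $0\leq s\leq m+1$ is then shown to correspond diagrammatically to deleting one of the two sign-changing knots or an intermediate unstabilized knot, and in \emph{every} such case the result is a disjoint union of a lens space and a Seifert fibered space whose relevant leg now has stabilizations of a single sign (or none). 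Your claim that ``the surviving portion of arm $i$ consists of positive basic slices only'' must be verified over all of these $m+2$ outcomes, not just a preferred one. For the junction point: the paper exhibits the required $T^2\times[0,2]$ explicitly by taking a neighborhood $N$ of $T_1\cup\cdots\cup T_{n-1}$ together with vertical annuli joining them, so that $(\Sigma\times S^1)\setminus N$ is a continued fraction block of $|e_0+2|$ positive basic slices abutting $T_n$; together with the outermost negative basic slice of $V_n$ this sandwiches $T_n=\partial V_n$ as a mixed torus after an explicit $SL(2,\mathbb{Z})$ normalization. (This is also where the hypothesis $e_0\leq -3$ enters, guaranteeing the central knot is stabilized at least once.) With those two verifications supplied, your argument closes up and agrees with the paper's.
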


This result will be made more precise through a sequence of propositions in Section \ref{sec:negative-euler}.  We point out that, together with Lisca's classification of fillings for universally tight lens spaces and Choi-Park's classification of fillings for the canonical contact structure on $M(-\frac{q_1}{p_1},-\frac{q_2}{p_2},-\frac{q_3}{p_3})$, $e_0\leq -4$, Theorem \ref{thm:negative-euler} allows us to classify the exact symplectic fillings of any contact structure on $M(-\frac{q_1}{p_1},-\frac{q_2}{p_2},-\frac{q_3}{p_3})$, $e_0\leq -4$.

\subsection{Virtually overtwisted circle bundles over surfaces}\label{subsec:circle-bundles}
Our final application of Menke's JSJ decomposition completes the classification of exact symplectic fillings for virtually overtwisted tight contact structures on circle bundles over closed surfaces.  We let $\pi\colon M\to\Sigma$ be a circle bundle over a closed Riemann surface $\Sigma$ of genus $g$, and we let $\xi$ be a tight contact structure on $M$.  Honda \cite{honda2000classification2} defines the \emph{twisting number} $t(S^1)\leq 0$ of $\xi$ to be the maximum non-positive twisting number achieved by a closed Legendrian curve in $M$ which is isotopic to the $S^1$-fiber.  Here the twisting number is measured relative to the fibration framing, and is defined to be zero if $M$ admits a fiber-isotopic Legendrian curve with positive twisting number.  In \cite{honda2000classification} and \cite{honda2000classification2}, Honda classifies the tight contact structures on $M$, and in this note we classify the exact symplectic fillings of $M$, provided $\xi$ is virtually overtwisted and $t(S^1)<0$.

\begin{proposition}\label{prop:circle-bundles}
Let $M\to\Sigma$ be a circle bundle over a closed Riemann surface of genus $g>1$, and let $\xi$ be a virtually overtwisted tight contact structure on $M$ with $t(S^1)<0$.  Then $(M,\xi)$ admits a unique exact symplectic filling, up to symplectomorphism.
\end{proposition}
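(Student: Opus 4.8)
The plan is to reduce to a repeated application of Theorem~\ref{thm:menke-knot}. Honda's classification of tight contact structures on circle bundles over surfaces of genus $g>1$ with $t(S^1)=n<0$ describes $(M,\xi)$ via a surgery diagram on a chain/plumbing that is closely modeled on the lens-space and Seifert-fibered pictures of Figures~\ref{fig:lens-space-filling} and~\ref{fig:seifert-filling}: one builds $(M,\xi)$ from a Stein handlebody whose 1-handles carry the genus and whose 2-handles are Legendrian realizations of the unknots governing the twisting, with each such unknot stabilized a number of times dictated by $n$. First I would recall this presentation in detail and identify, among the Legendrian 2-handle attaching curves, at least one curve $L$ that has been stabilized \emph{both} positively and negatively. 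The key point is that since $\xi$ is virtually overtwisted, the stabilizations do not all agree in sign, so such a mixed curve exists; here the hypothesis $t(S^1)<0$ is what guarantees there are stabilizations to speak of at all (when $t(S^1)=0$ the relevant curve has maximal Thurston--Bennequin invariant and no stabilization, and the argument would break down).

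Next I would apply Theorem~\ref{thm:menke-knot} to this curve $L=S_+(S_-(L'))$: every strong (exact) filling of $(M,\xi)$ is obtained by attaching a Weinstein 2-handle along $S_+(S_-(L'))$ to a strong (exact) filling of the contact manifold $(M_1,\xi_1)$ that remains when $L$ is deleted from the diagram. I would then argue that $(M_1,\xi_1)$ is again a contact manifold of the same combinatorial type --- a circle bundle (or connected sum involving one) presented by a strictly smaller handlebody diagram --- so that one may iterate: at each stage either there remains a 2-handle curve with mixed stabilizations, in which case Theorem~\ref{thm:menke-knot} applies again and the complexity drops, or every remaining curve has stabilizations of a single sign. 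Running this induction to its conclusion, every exact filling of $(M,\xi)$ is obtained by attaching a finite sequence of Weinstein 2-handles to an exact filling of a terminal contact manifold $(M_\infty,\xi_\infty)$ in which all stabilizations on each curve are of one sign --- i.e. a (connected sum of) universally tight pieces, together with whatever base-surface handlebody survives. Since attaching Weinstein 2-handles along prescribed Legendrian knots does not introduce any choices beyond the filling one starts from, uniqueness of the filling of $(M,\xi)$ up to symplectomorphism follows from uniqueness of the exact filling of $(M_\infty,\xi_\infty)$.

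The final step, and the main obstacle, is to show that the terminal object $(M_\infty,\xi_\infty)$ has a unique exact filling. When $g>1$ the surviving base handlebody is built from 1-handles (contributing the genus) and the "all single-sign" 2-handle curves, and I would want to recognize $(M_\infty,\xi_\infty)$ either as a Stein-fillable contact structure on a manifold whose exact fillings are already classified, or --- more plausibly --- invoke that after the reductions the remaining 2-handle curves can be destabilized/handleslid so that $M_\infty$ is a connected sum of $S^1\times S^2$'s (from the genus) with universally tight lens spaces, whose exact fillings are unique by the theorems of Eliashberg~\cite{eliashberg1990filling}, McDuff~\cite{mcduff1991symplectic}, and Plamenevskaya--Van~Horn-Morris~\cite{plamenevskaya2010planar} together with the connected-sum result Theorem~\ref{thm:connected-sum}. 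The delicate bookkeeping is making sure the genus-$g$ 1-handles really do reduce this cleanly and that the terminal contact structure is exactly the standard/universally tight one; I expect this to require care with Honda's invariants (the relative Euler class data) to pin down $\xi_\infty$, and it is here that the genus hypothesis $g>1$ --- as opposed to $g\le 1$, where Honda's classification and the relevant filling uniqueness statements are genuinely different --- is used.
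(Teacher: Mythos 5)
Your opening move is the right one in spirit --- locate a surgery curve stabilized both positively and negatively and apply Theorem~\ref{thm:menke-knot} --- but the proposal has a genuine gap at exactly the point you flag as ``the main obstacle'': you never identify the terminal contact manifold, and the identification you guess at would not suffice. If the terminal object really were a connected sum of $S^1\times S^2$'s with universally tight lens spaces, uniqueness of its exact filling would \emph{fail} in general: $(L(4,1),\xi_{\std})$ has two exact fillings by McDuff, and Lisca's classification produces many universally tight lens spaces with multiple fillings. So an argument that terminates there cannot conclude. Moreover the iterative scheme itself rests on an unverified picture of Honda's presentation as a chain/plumbing with many 2-handle curves; you would need to prove that after each amputation the result is again of the same combinatorial type, which you do not do.

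The paper's proof avoids all of this because Honda's classification is much more rigid than your sketch assumes: for $g>1$ and $2g-2>e$ there are no virtually overtwisted structures with $t(S^1)<-1$, and every tight structure with $t(S^1)=-1$ is obtained by Legendrian surgery on a \emph{single} knot $K$ in $(\#^{2g}(S^1\times S^2),\xi_{\std})$ stabilized exactly $(2g-2)-e$ times, with $\xi$ virtually overtwisted precisely when both signs of stabilization occur on $K$. Hence one application of Theorem~\ref{thm:menke-knot} lands directly on $(\#^{2g}(S^1\times S^2),\xi_{\std})$, whose exact filling is unique up to symplectomorphism by Theorem~\ref{thm:connected-sum} together with the Gromov--Eliashberg uniqueness for $S^1\times S^2$; no induction, no lens-space summands, and no relative Euler class bookkeeping is needed. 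To repair your proposal you would need to replace the speculative presentation and the iteration by this single-knot description from \cite[Part 2]{honda2000classification2}.
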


\begin{remark}
The only circle bundles over $S^2$ which admit virtually overtwisted contact structures have the form $L(|e|,1)$, where $e\leq -2$ is the Euler number of the circle bundle.  Any virtually overtwisted contact structure on such a lens space is uniquely exactly fillable, per Plamenevskaya--Van Horn-Morris \cite[Theorem 1.2]{plamenevskaya2010planar}, so the conclusion still holds.  In the $g=1$ case we have a circle bundle over $T^2$, which can also be realized as a parabolic torus bundle over $S^1$.  If $e\leq -2$, the conclusion again holds, but for $e\geq 2$ the virtually overtwisted structures admit no exact symplectic fillings.  See \cite[Theorem 1.1]{christian2021symplectic}.
\end{remark}

The only virtually overtwisted circle bundles not addressed by Proposition \ref{prop:circle-bundles}, the lens spaces treated in \cite{plamenevskaya2010planar}, or the torus bundles treated in \cite{christian2021symplectic} are those with $g>1$ and $t(S^1)=0$.  In \cite{lisca2003infinite,lisca2004tight} Lisca-Stipsicz verify a conjecture of Honda, which says that these structures are not symplectically semi-fillable, and thus are not symplectically fillable.  In Section~\ref{subsec:circle-bundle-proofs}, we will establish the following corollary.

\begin{corollary}\label{cor:circle-bundles}
Let $M\to\Sigma$ be a circle bundle over a closed Riemann surface, with virtually overtwisted tight contact structure $\xi$, and let $t(S^1)\leq 0$ be the twisting number.  If $t(S^1)=0$, then $(M,\xi)$ does not admit an exact symplectic filling; if $t(S^1)<0$, then $(M,\xi)$ admits a unique exact symplectic filling, up to symplectomorphism.
\end{corollary}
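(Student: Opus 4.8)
The plan is to establish the corollary by a short case analysis on the genus $g$ of the base surface $\Sigma$, assembling Proposition \ref{prop:circle-bundles} with the classification results already cited above. Since $\Sigma$ is a closed orientable surface, the three cases $g>1$, $g=1$, and $g=0$ are exhaustive, and within each we must treat the two sub-cases $t(S^1)<0$ and $t(S^1)=0$ separately.

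I would begin with $g>1$, which is the substantive case. If $t(S^1)<0$, then $(M,\xi)$ has a unique exact symplectic filling by Proposition \ref{prop:circle-bundles}, and there is nothing more to do. If $t(S^1)=0$, then $\xi$ lies in the family of virtually overtwisted structures treated by Lisca-Stipsicz in \cite{lisca2002tight,lisca2003infinite}, where Honda's conjecture is verified that these structures are not symplectically semi-fillable; in particular $(M,\xi)$ admits no strong symplectic filling, as claimed.

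For the remaining genera I would invoke the previously cited low-genus results. When $g=0$, $M$ is a circle bundle over $S^2$, hence a lens space of the form $L(|e|,1)$ with Euler number $e\le -2$ --- these are the only circle bundles over $S^2$ carrying a virtually overtwisted contact structure, and (by Honda's classification) all such structures have $t(S^1)<0$, so the $t(S^1)=0$ sub-case is vacuous. By Plamenevskaya--Van Horn-Morris \cite[Theorem 1.2]{plamenevskaya2010planar}, every virtually overtwisted contact structure on such a space is uniquely exactly fillable, so the conclusion holds. When $g=1$, $M$ is a circle bundle over $T^2$; for Euler number $0$ this is $T^3$, which carries no virtually overtwisted structure, while for nonzero Euler number $M$ is a parabolic torus bundle over $S^1$, and \cite[Theorem 1.1]{christian2019symplectic} provides a unique exact filling when $t(S^1)<0$ and non-fillability (indeed the absence of strong fillings) when $t(S^1)=0$, the latter occurring precisely for Euler number $e\ge 2$.

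The main obstacle I anticipate is not a hard piece of geometry but rather the bookkeeping needed to line up, in each low-genus case, the dichotomy $t(S^1)=0$ versus $t(S^1)<0$ with the hypotheses actually used by the cited theorems, which are frequently phrased in terms of the Euler number $e$ or an explicit surgery diagram rather than in terms of the twisting number. Concretely, one must confirm via Honda's classification that a virtually overtwisted structure with $t(S^1)=0$ does not occur on a circle bundle over $S^2$, occurs on a circle bundle over $T^2$ exactly when $e\ge 2$, and on a surface of genus $g>1$ is precisely the family ruled out in \cite{lisca2002tight,lisca2003infinite}; dually, that every virtually overtwisted structure with $t(S^1)<0$ falls under Proposition \ref{prop:circle-bundles}, \cite{plamenevskaya2010planar}, or \cite{christian2019symplectic} as appropriate. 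Once this dictionary between $t(S^1)$, $e$, and the cited classifications is pinned down, the corollary follows at once.
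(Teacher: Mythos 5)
Your proposal is correct and follows essentially the same route as the paper: the corollary is assembled exactly as you describe, from Proposition \ref{prop:circle-bundles} for $g>1$ with $t(S^1)<0$, the Lisca-Stipsicz non-fillability results \cite{lisca2002tight,lisca2003infinite} for $g>1$ with $t(S^1)=0$, and the previously cited results of Plamenevskaya--Van Horn-Morris \cite{plamenevskaya2010planar} and \cite{christian2019symplectic} for the $g=0$ and $g=1$ cases. The bookkeeping you flag (matching the dichotomy on $t(S^1)$ with the Euler-number hypotheses of the cited theorems) is precisely what the paper's preceding remark carries out.
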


\subsection*{Acknowledgements} The authors thank an anonymous referee for many helpful comments, and thank Hyunki Min for a very useful correspondence.  The first author was partially supported by NSF grant DMS-1745583.  The second author was partially supported by Grant No. 11871332 of the National Natural Science Foundation of China.

\section{Proofs}

\subsection{Lens spaces}\label{sec:lens-space-proofs}
Throughout this section we will consider a lens space $L(p,q)$, $p>q>0$ with a virtually overtwisted contact structure as depicted in Figure \ref{fig:lens-space-filling}.  Namely,
\[
-\frac{p}{q}=[a_0,a_1,\ldots,a_k]
\]
for uniquely determined integers $a_i\leq -2$, and the stabilizations applied to the knots in Figure \ref{fig:lens-space-filling} do not all have the same sign.  We will prove Theorem \ref{thm:lens-space-fillings} by showing that every strong or exact symplectic filling of $(L(p,q),\xi)$ can be obtained by attaching a Weinstein 2-handle to a filling of a connected sum of the form
\[
(L(p',q'),\xi')\#(L(p'',q''),\xi''),
\]
obtained by deleting a single knot from the diagram describing $(L(p,q),\xi)$.  Beginning with an arbitrary filling of $(L(p,q),\xi)$, this decomposition may be inductively applied (in conjunction with Theorem \ref{thm:connected-sum}) until we have a symplectic filling of a connected sum of the form
\[
\mathop{\#}\limits_{i=1}^\ell (L(p_i,q_i),\xi_i),
\]
where each $(L(p_i,q_i),\xi_i)$ is a universally tight lens space.  To produce a complete list of the fillings of $(L(p,q),\xi)$, we consider the fillings of all connected sums of this form which may result from $(L(p,q),\xi)$.\\

In case one of the knots in Figure \ref{fig:lens-space-filling} has been stabilized both positively and negatively, we may directly apply Theorem \ref{thm:menke-knot}, as described in Section \ref{sec:intro}, to realize our symplectic filling as the result of attaching a Weinstein 2-handle to a connected sum.  We now focus on the case where no knots have been stabilized both positively and negatively.  In this case we may identify knots $K_+$ and $K_-$, each of which has been stabilized at least once, with all stabilizations being positive or negative, respectively.  Moreover, we may choose $K_+$ and $K_-$ to be adjacent, in that none of the knots between them have been stabilized.  Finally, our argument loses no generality by assuming that $K_+$ is to the right of $K_-$ in Figure \ref{fig:lens-space-filling}.\\

We now define
\[
-\frac{p'}{q'} = [a_0,\ldots,a_{k-1},a_k+1],
\]
where we identify $[a_0,\ldots,a_{k-1},a_k+1]$ with $[a_0,\ldots,a_{k-2},a_{k-1}+1]$ if $a_k=-2$.  Now \cite[Section 4.6]{honda2000classification} allows us to write $L(p,q)=V_0\cup_A V_1$, where $V_0$ and $V_1$ are solid tori with a map $A\colon\partial V_0\to\partial V_1$, the dividing curves of $\partial V_0$ are vertical, and the dividing curves of $\partial V_1$ have slope $-p'/q'$.  Moreover, we may decompose $V_1$ as
\[
V_1 = N \cup (V_1\setminus N),
\]
with $V_1\setminus N \cong T^2\times I$, such that $s_0=-1$ and $s_1=-p'/q'$.  Here we denote by $s_i$ the slope of the dividing curves of $T^2\times\{i\}$, for $i=0,1$.\\

The thickened torus $T^2\times I$ has a basic slice decomposition which we now describe.  Let
\[
0 \leq i_1 < i_2 < \cdots < i_\ell \leq k
\]
be the indices for which $a_{i_j}\leq -3$.  Then $T^2\times I$ decomposes into $\ell$ continued fraction blocks, with a total of
\[
|(a_{i_1}+2)(a_{i_2}+2)\cdots(a_{i_\ell}+2)|
\]
basic slices.  The basic slices in each continued fraction block will all be of a single sign, and the continued fraction blocks corresponding to $K_+$, $K_-$ will be adjacent, of opposite sign.  We immediately see that the boundary convex torus $T$ sitting between the continued fraction blocks associated to $K_+$ and $K_-$ is a mixed torus, sandwiched between basic slices $S_+\subset K_+$ and $S_-\subset K_-$ of opposite sign.\\

Let $-p'_1/q'_1$ be the slope of the dividing curves on $T$, and let $-p'_2/q'_2$, $-p'_0/q'_0$ be the opposite slopes of $S_+$, $S_-$, respectively.  We would like to normalize this neighborhood of $T$.  After observing that
\[
q'_1p'_2-p'_1q'_2 = 1
\quad\text{and}\quad
q'_0p'_1-p'_0q'_1 = 1,
\]
we see that applying the transformation
\[
\left(\begin{matrix}
1 & 0\\
p'_2q'_0-q'_2p'_0 - 1 & 1
\end{matrix}\right)
\left(\begin{matrix}
-p'_1 & -q'_1\\
p'_2 & q'_2
\end{matrix}\right)
\in SL(2,\mathbb{Z})
\]
leaves us with the slopes
\[
s_0 = -1,
\quad
s_1 = \infty,
\quad
s_2 = p'_2q'_0-q'_2p'_0 - 1.
\]
According to Theorem \ref{thm:jsj}, applying the JSJ decomposition to a filling of $(L(p,q),\xi)$ will produce a filling of $(M',\xi')$, obtained from $(L(p,q),\xi)$ by splitting with slope $0\leq s\leq p'_2q'_0-q'_2p'_0-2$ along $T$.\\

We now claim that $p'_2q'_0-q'_2p'_0-2=m+1$, where $m$ is the number of unstabilized knots between $K_+$ and $K_-$ in Figure \ref{fig:lens-space-filling}.  According to \cite[Lemma 4.12]{honda2000classification}, the slopes of the basic slice decomposition of $T^2\times I$ are obtained by incrementing the last entry of the continued fraction expansion of $-p'/q'$ until we have $-1$.  In particular, we may write
\[
-\frac{p'_2}{q'_2} = [a_0,a_1,\ldots,a_n,\overbrace{-2,\ldots,-2}^{m+1}]
\]
for some $n<k$ with $a_n\leq -3$ and then see that
\[
-\frac{p'_1}{q'_1} = [a_0,a_1,\ldots,a_n+1]
\]
and
\[
-\frac{p'_0}{q'_0} = [a_0,a_1,\ldots,a_n+2], \text{~if~} a_n\leq -4
\quad\text{or}\quad
-\frac{p'_0}{q'_0} = [a_0,a_1,\ldots,a_{n-1}+1], \text{~if~} a_n=-3.
\]
We can now verify our claim inductively.  If $a_n\leq -4$ we have
\[
[a_n+2] = -\frac{a_n+2}{-1}
\quad\text{and}\quad
[a_n,\overbrace{-2,\ldots,-2}^{m+1}] = -\frac{(m+2)a_n+(m+1)}{-(m+2)}
\]
and observe that
\[
(-1)((m+2)a_n + (m+1)) -(-(m+2))(a_n+2) = m+3.
\]
If we instead have $a_n=-3$, then
\[
[a_{n-1}+1] = -\frac{a_{n-1}+1}{-1}
\quad\text{and}\quad
[a_{n-1},-3,\overbrace{-2,\ldots,-2}^{m+1}] = -\frac{(2m+5)a_{n-1}+(m+2)}{-(2m+5)},
\]
so
\[
(-1)((2m+5)a_{n-1}+(m+2)) -(-(2m+5))(a_{n-1}+1)=m+3.
\]
In either case, we may now apply the following inductive step.  If $a/b$ and $a'/b'$ satisfy $ab'-a'b=m+3$, then
\[
[r,a/b] = \frac{ar-b}{a}
\quad\text{and}\quad
[r,a'/b'] = \frac{a'r-b'}{a'}
\]
satisfy
\[
(ar-b)a'-a(a'r-b') = ab'-ba'=m+3.
\]
This proves our claim, so we see that every filling of $(L(p,q),\xi)$ is obtained by attaching a round symplectic 1-handle to a filling of a contact manifold which is obtained from $(L(p,q),\xi)$ by splitting with slope $0\leq s\leq m+1$ along $T$.\\

\begin{figure}
\centering
\begin{tikzpicture}[scale=0.5]
\begin{knot}[
	clip width=5,
	clip radius=2pt,
	ignore endpoint intersections=false,
	flip crossing/.list={2,4,6}]
\strand (0,0) to[out=0,in=180]
        (2,0.5) to[out=0,in=180]
        (4,0) to[out=180,in=0]
        (3,-0.5) to[out=0,in=180]
        (4,-1) to[out=180,in=0]
        (2,-1.5) to[out=180,in=0]
        (0,-1) to[out=0,in=180]
        (1,-0.5) to[out=180,in=0]
        (0,0);
        
\strand (2.5,0) to[out=0,in=180]
        (6,1.25) to[out=0,in=180]
        (7,1) to[out=180,in=0]
        (6,0.5) to[out=0,in=180]
        (7,0) to[out=180,in=0]
        (6,-0.5) to[out=0,in=180]
        (7,-1) to[out=180,in=0]
        (6,-1.25) to[out=180,in=0]
        (2.5,0);
        
\strand (5.5,0) to[out=0,in=180]
		(8,0.75) to[out=0,in=180]
		(10.5,0) to[out=180,in=0]
		(8,-0.75) to[out=180,in=0]
		(5.5,0);

\strand (9,-1) to[out=0,in=180]
		(10,-0.5) to[out=180,in=0]
		(9,0) to[out=0,in=180]
		(10,0.5) to[out=180,in=0]
		(9,1) to[out=0,in=180]
		(10,1.25) to[out=0,in=180]
		(13.5,0) to[out=180,in=0]
		(10,-1.25) to[out=180,in=0]
		(9,-1);
\end{knot}

\draw[->] (7,-1.5) -- (7,-2.5);

\begin{scope}[yshift=-4cm]
\begin{knot}[
	clip width=5,
	clip radius=2pt,
	ignore endpoint intersections=false]
\strand[dashed] (-4,0) to[out=0,in=180]
		(-1.5,0.75) to[out=0,in=180]
		(1,0) to[out=180,in=0]
		(-1.5,-0.75) to[out=180,in=0]
		(-4,0);
\end{knot}

\node at (2,0) {$\sqcup$};
\begin{scope}[xshift=3cm]
\begin{knot}[
	clip width=5,
	clip radius=2pt,
	ignore endpoint intersections=false,
	flip crossing/.list={2,4,6}]
\strand[dashed] (0,0) to[out=0,in=180]
        (2,0.5) to[out=0,in=180]
        (4,0) to[out=180,in=0]
        (3,-0.5) to[out=0,in=180]
        (4,-1) to[out=180,in=0]
        (2,-1.5) to[out=180,in=0]
        (0,-1) to[out=0,in=180]
        (1,-0.5) to[out=180,in=0]
        (0,0);
        
\strand (2.5,0) to[out=0,in=180]
        (6,1.25) to[out=0,in=180]
        (7,1) to[out=180,in=0]
        (6,0.5) to[out=0,in=180]
        (7,0) to[out=180,in=0]
        (6,-0.5) to[out=0,in=180]
        (7,-1) to[out=180,in=0]
        (6,-1.25) to[out=180,in=0]
        (2.5,0);
        
\strand (5.5,0) to[out=0,in=180]
		(8,0.75) to[out=0,in=180]
		(10.5,0) to[out=180,in=0]
		(8,-0.75) to[out=180,in=0]
		(5.5,0);

\strand (9,-1) to[out=0,in=180]
		(10,-0.5) to[out=180,in=0]
		(9,0) to[out=0,in=180]
		(10,0.5) to[out=180,in=0]
		(9,1) to[out=0,in=180]
		(10,1.25) to[out=0,in=180]
		(13.5,0) to[out=180,in=0]
		(10,-1.25) to[out=180,in=0]
		(9,-1);
\end{knot}
\end{scope}

\end{scope}

\end{tikzpicture}
\caption{Every filling of the top lens space $L(89,24)$ with the given contact structure is obtained by attaching a round symplectic 1-handle to a filling of the disjoint union $S^3\sqcup L(24,7)$ below; the round 1-handle is attached along the dashed knots.  Fillings of $L(24,7)$ can be further decomposed as seen in Figure \ref{fig:example-part-two}.}
\label{fig:example}
\end{figure}

It is now straightforward to check that splitting with slope $s=0$ along $T$ produces a disjoint union of lens spaces, obtained from Figure \ref{fig:lens-space-filling} by deleting $K_-$ and realizing the two resulting chains of unknots in separate diagrams.  Attaching a round symplectic 1-handle to this disjoint union corresponds to first attaching a Weinstein 1-handle which produces the connected sum of these lens spaces, and then attaching a Weinstein 2-handle along $K_-$.  Similarly, splitting $(L(p,q),\xi)$ with slope $s=m+1$ along $T$ corresponds to deleting the knot $K_+$.  Each intermediate slope corresponds to deleting an unstabilized knot between $K_-$ and $K_+$.  In any case we see, as claimed above, that every filling of $(L(p,q),\xi)$ can be obtained by attaching a Weinstein 2-handle to a symplectic filling of a connected sum of lens spaces which is obtained by erasing a single knot from Figure \ref{fig:lens-space-filling}.  If the constituent lens spaces in this connected sum are virtually overtwisted, we may repeat this process until we have a connected sum of universally tight lens spaces.  This proves Theorem \ref{thm:lens-space-fillings}.\\

\begin{figure}
\centering
\begin{tikzpicture}[scale=0.5]
\begin{scope}[yshift=4cm]
\begin{knot}[
	clip width=5,
	clip radius=2pt,
	ignore endpoint intersections=false]
\strand[dashed] (-4,0) to[out=0,in=180]
		(-1.5,0.75) to[out=0,in=180]
		(1,0) to[out=180,in=0]
		(-1.5,-0.75) to[out=180,in=0]
		(-4,0);
\end{knot}

\node at (2,0) {$\sqcup$};
\begin{scope}[xshift=0.5cm]
\begin{knot}[
	clip width=5,
	clip radius=2pt,
	ignore endpoint intersections=false,
	flip crossing/.list={2,4,6}]
\strand[dashed] (2.5,0) to[out=0,in=180]
        (6,1.25) to[out=0,in=180]
        (7,1) to[out=180,in=0]
        (6,0.5) to[out=0,in=180]
        (7,0) to[out=180,in=0]
        (6,-0.5) to[out=0,in=180]
        (7,-1) to[out=180,in=0]
        (6,-1.25) to[out=180,in=0]
        (2.5,0);
        
\strand (5.5,0) to[out=0,in=180]
		(8,0.75) to[out=0,in=180]
		(10.5,0) to[out=180,in=0]
		(8,-0.75) to[out=180,in=0]
		(5.5,0);

\strand (9,-1) to[out=0,in=180]
		(10,-0.5) to[out=180,in=0]
		(9,0) to[out=0,in=180]
		(10,0.5) to[out=180,in=0]
		(9,1) to[out=0,in=180]
		(10,1.25) to[out=0,in=180]
		(13,0) to[out=180,in=0]
		(10,-1.25) to[out=180,in=0]
		(9,-1);
\end{knot}
\end{scope}
\end{scope}

\begin{scope}[yshift=0cm]
\begin{scope}[xshift=-9.5cm]
\begin{knot}[
	clip width=10,
	clip radius=2pt,
	ignore endpoint intersections=false,
	flip crossing/.list={2}]
\strand (2.5,0) to[out=0,in=180]
        (6,1.25) to[out=0,in=180]
        (7,1) to[out=180,in=0]
        (6,0.5) to[out=0,in=180]
        (7,0) to[out=180,in=0]
        (6,-0.5) to[out=0,in=180]
        (7,-1) to[out=180,in=0]
        (6,-1.25) to[out=180,in=0]
        (2.5,0);
        
\strand[dashed] (5.5,0) to[out=0,in=180]
		(8,0.75) to[out=0,in=180]
		(10.5,0) to[out=180,in=0]
		(8,-0.75) to[out=180,in=0]
		(5.5,0);
\end{knot}
\end{scope}

\node at (2,0) {$\sqcup$};

\begin{scope}[xshift=-2.5cm]
\begin{knot}[
	clip width=5,
	clip radius=2pt,
	ignore endpoint intersections=false,
	flip crossing/.list={2,4,6}]
\strand[dashed] (5.5,0) to[out=0,in=180]
		(8,0.75) to[out=0,in=180]
		(10.5,0) to[out=180,in=0]
		(8,-0.75) to[out=180,in=0]
		(5.5,0);
		
\strand (9,-1) to[out=0,in=180]
		(10,-0.5) to[out=180,in=0]
		(9,0) to[out=0,in=180]
		(10,0.5) to[out=180,in=0]
		(9,1) to[out=0,in=180]
		(10,1.25) to[out=0,in=180]
		(13.5,0) to[out=180,in=0]
		(10,-1.25) to[out=180,in=0]
		(9,-1);
\end{knot}
\end{scope}
\end{scope}

\begin{scope}[yshift=-4cm]
\begin{scope}[xshift=-12.5cm]
\begin{knot}[
	clip width=5,
	clip radius=2pt,
	ignore endpoint intersections=false,
	flip crossing/.list={2,4,6}]
\strand (2.5,0) to[out=0,in=180]
        (6,1.25) to[out=0,in=180]
        (7,1) to[out=180,in=0]
        (6,0.5) to[out=0,in=180]
        (7,0) to[out=180,in=0]
        (6,-0.5) to[out=0,in=180]
        (7,-1) to[out=180,in=0]
        (6,-1.25) to[out=180,in=0]
        (2.5,0);
        
\strand (5.5,0) to[out=0,in=180]
		(8,0.75) to[out=0,in=180]
		(10.5,0) to[out=180,in=0]
		(8,-0.75) to[out=180,in=0]
		(5.5,0);
		
\strand[dashed] (9,-1) to[out=0,in=180]
		(10,-0.5) to[out=180,in=0]
		(9,0) to[out=0,in=180]
		(10,0.5) to[out=180,in=0]
		(9,1) to[out=0,in=180]
		(10,1.25) to[out=0,in=180]
		(13.5,0) to[out=180,in=0]
		(10,-1.25) to[out=180,in=0]
		(9,-1);
\end{knot}
\end{scope}

\node at (2,0) {$\sqcup$};
\begin{scope}[xshift=-2.5cm]
\begin{knot}[
	clip width=5,
	clip radius=2pt,
	ignore endpoint intersections=false,
	flip crossing/.list={2,4,6}]
\strand[dashed] (5.5,0) to[out=0,in=180]
		(8,0.75) to[out=0,in=180]
		(10.5,0) to[out=180,in=0]
		(8,-0.75) to[out=180,in=0]
		(5.5,0);
\end{knot}
\end{scope}
\end{scope}
\end{tikzpicture}
\caption{Applying the JSJ decomposition to a filling of $L(24,7)$ with the contact structure seen in Figure \ref{fig:example} yields a filling of one of the three disjoint unions seen here.  We recover a filling of $L(24,7)$ by attaching a round symplectic 1-handle along the dashed knots.}
\label{fig:example-part-two}
\end{figure}

Theorem \ref{thm:lens-space-fillings} and its proof provide a recipe for classifying the fillings of a virtually overtwisted lens space $(L(p,q),\xi)$.  Given a depiction of the lens space as in Figure \ref{fig:lens-space-filling}, we can produce a tree whose leaves are disjoint unions of universally tight lens spaces, and every filling of $(L(p,q),\xi)$ can be obtained by attaching a specified sequence of round symplectic 1-handles to a filling of one of these disjoint unions.  An example of such a tree is given by taking Figures~\ref{fig:example} and \ref{fig:example-part-two} together.  The root of our tree is $(L(p,q),\xi)$, and we move to a new level of the tree by applying the decomposition described in this section.  If the mixed torus leading to the decomposition comes from a knot which has been stabilized both positively and negatively, we have a single branch.  If the mixed torus is associated to a pair $K_+,K_-$ of adjacent knots with opposite signs, then we have $m+2$ branches, where $m$ is the number of unstabilized knots between $K_+$ and $K_-$.\\

We observe that this argument recovers Fossati's classification of fillings for virtually overtwisted structures on lens spaces which result from contact surgery on the Hopf link (\cite[Theorem 1]{fossati2019contact}).  Consider $-\frac{p}{q}=[a_1,a_2]$, for some $a_1,a_2\leq -2$, and let $\xi_{\vot}$ be a virtually overtwisted contact structure on $L(p,q)$.  Our decomposition tells us that every filling of $(L(p,q),\xi_{\vot})$ is obtained by a specified Weinstein 2-handle attachment to a filling of either $L(-a_1,1)$ or $L(-a_2,1)$, with a particular (not necessarily virtually overtwisted) contact structure.  With the exception of a universally tight structure on $L(4,1)$, each lens space $L(-a_i,1)$ has a unique exact filling.  Moreover, we see from our decomposition that attaching a Weinstein 2-handle to such a standard filling in the manner prescribed will always yield the standard filling of $(L(p,q),\xi_{\vot})$.  So we have the following corollary.

\begin{corollary}[{c.f. \cite[Theorem 1]{fossati2019contact}}]
Let $(L(p,q),\xi_{\vot})$ be a virtually overtwisted lens space, with $-\frac{p}{q}=[a_1,a_2]$, for some $a_1,a_2\leq -2$.  Then $(L(p,q),\xi_{\vot})$ has
\begin{itemize}
	\item a unique exact filling, up to diffeomorphism, if $a_1\neq -4$ and $a_2\neq -4$, or if at least one of $a_1,a_2$ is $-4$ and the corresponding knot has been stabilized both positively and negatively;
	\item precisely two exact fillings, up to diffeomorphism, if at least one of $a_1,a_2$ is $-4$, and the corresponding knot has stabilizations of a single sign.
\end{itemize}
\end{corollary}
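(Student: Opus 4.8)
The plan is to push the two-knot diagram $-\frac{p}{q}=[a_1,a_2]$ through the decomposition built in the proof of Theorem~\ref{thm:lens-space-fillings} in Section~\ref{sec:lens-space-proofs}, which exhibits every exact filling of $(L(p,q),\xi_{\vot})$ as a Weinstein $2$-handle attached to an exact filling of a lens space $(L(m,1),\zeta)$, and then to read off the dichotomy from the known classification of exact fillings of $L(m,1)$. Concretely, Figure~\ref{fig:lens-space-filling} presents a Stein filling of $L(p,q)$ built from two Legendrian unknots $K_1,K_2$ (framed $a_1,a_2$), with $K_i$ stabilized $-2-a_i$ times, and $\xi_{\vot}$ is virtually overtwisted precisely when these stabilizations do not all share one sign. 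If some $K_i$ is stabilized both positively and negatively (which forces $a_i\le -4$), then $K_i=S_+(S_-(L))$ for the less-stabilized unknot $L$, and Theorem~\ref{thm:menke-knot} realizes every exact filling of $(L(p,q),\xi_{\vot})$ as a Weinstein $2$-handle attached along $K_i$ to an exact filling of $(L(-a_j,1),\zeta_j)$, $j\ne i$, where $\zeta_j$ is the tight structure on $L(-a_j,1)$ cut out by the stabilizations on $K_j$. If no $K_i$ is mixed, then for virtual overtwistedness the two knots must carry opposite single signs; the convex torus between the continued-fraction blocks of $K_+$ and $K_-$ is then a mixed torus with no unstabilized knot between them, and applying Theorem~\ref{thm:jsj} along it splits $(L(p,q),\xi_{\vot})$ --- for each of the two admissible slopes $s\in\{0,1\}$ --- into $S^3\sqcup(L(-a_1,1),\zeta_1)$ or $S^3\sqcup(L(-a_2,1),\zeta_2)$, so again every exact filling is a Weinstein $2$-handle attached to an exact filling of one of $(L(-a_1,1),\zeta_1)$ or $(L(-a_2,1),\zeta_2)$. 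In all cases the surviving structure $\zeta_i$ is universally tight exactly when the stabilizations surviving on $K_i$ all have one sign, and virtually overtwisted otherwise.

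Next I would invoke the filling classification for $L(m,1)$. A virtually overtwisted contact structure on $L(m,1)$ is uniquely fillable --- delete its single knot to reduce to $(S^3,\xi_{\std})$ and apply Eliashberg, or cite Plamenevskaya--Van Horn-Morris \cite{plamenevskaya2010planar} --- and a universally tight one is uniquely fillable when $m\ne 4$ and has exactly two exact fillings when $m=4$, by McDuff \cite{mcduff1991symplectic}. The corollary then follows by tracking when the decomposition can meet $L(4,1)$ carrying a universally tight structure. If neither $a_i$ equals $-4$, every lens space reached is $L(m,1)$ with $m\ne 4$, hence uniquely fillable, and since the relevant $2$-handles are attached along fixed Legendrian knots, $(L(p,q),\xi_{\vot})$ is uniquely fillable. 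If some $a_i=-4$ with the corresponding knot mixed, the decomposition deletes that knot and lands on a structure that is either $L(m,1)$ with $m\ne 4$ or a virtually overtwisted structure on $L(4,1)$, again uniquely fillable. If, on the other hand, some $a_i=-4$ and $K_i$ carries only one sign, then deleting the other knot --- via Theorem~\ref{thm:menke-knot} if it is mixed, via the $K_+/K_-$ branch of Theorem~\ref{thm:jsj} if it is single-signed --- lands on $(L(4,1),\zeta_i)$ with $\zeta_i$ universally tight; McDuff gives exactly two fillings of this, attaching the prescribed $2$-handle along $K_i$ to the disk-bundle filling reproduces the Stein filling of Figure~\ref{fig:lens-space-filling}, and the rational homology ball filling yields a filling of strictly smaller second Betti number, so $(L(p,q),\xi_{\vot})$ has precisely two exact fillings.

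The hard part is the bookkeeping in this last step rather than any new geometric ingredient. One must check that in the ``$a_i=-4$, $K_i$ mixed'' situation the decomposition can be arranged to avoid a universally tight $L(4,1)$ --- the delicate point being the case where both $a_1$ and $a_2$ equal $-4$, where one must confirm that removing the mixed knot does not strand a universally tight structure on $L(4,1)$ --- and that in the ``$a_i=-4$, $K_i$ single-signed'' situation the count is exactly two: that the two exact fillings of $L(4,1)$ give non-diffeomorphic fillings of $L(p,q)$ (handled by the second-Betti-number computation) and that no other branch of the decomposition contributes a third diffeomorphism type, which follows from the fact, already used in the proof of Theorem~\ref{thm:lens-space-fillings}, that every branch carries the standard Stein filling of $L(p,q)$ to itself and the non-standard fillings produced by different branches agree. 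With these verifications the corollary follows by assembling Theorems~\ref{thm:jsj} and~\ref{thm:menke-knot} with the cited filling classifications for $L(m,1)$.
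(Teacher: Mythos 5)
Your proposal is correct and follows essentially the same route as the paper: the paper likewise runs the decomposition from the proof of Theorem \ref{thm:lens-space-fillings} on the two-knot diagram, observes that every exact filling arises from a prescribed Weinstein 2-handle attachment to a filling of $L(-a_1,1)$ or $L(-a_2,1)$ with some tight structure, and invokes the McDuff/Plamenevskaya--Van Horn-Morris classification together with the observation that attaching the prescribed 2-handle to the standard filling always returns the standard filling of $(L(p,q),\xi_{\vot})$. The bookkeeping points you flag --- the case $a_1=a_2=-4$ and the agreement of the non-standard fillings produced by different branches --- are treated no more explicitly in the paper than in your sketch.
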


\subsection{Surgeries on Legendrian negative cables}
In this section we prove Theorem \ref{thm:cables}.  As in the statement of the theorem, we let $L\subset(S^3,\xi_{\std})$ be a Legendrian knot with smooth knot type $\mathcal{K}$, and let $Q(S_+S_-(L))$ be a Legendrian negative cable of $S_+S_-(L)$ with smooth knot type $\mathcal{K}_{p,q}$, $p<q(\tb(L)-2)$.  We suppose that the Thurston-Bennequin number of $Q(S_+S_-(L))$ is maximal among such knots, and we let $(M,\xi)$ be the contact manifold obtained by Legendrian surgery along $Q(S_+S_-(L))$.\\

We may use the stabilizations on $S_+S_-(L)$ to identify a mixed torus in $(M,\xi)$.  In particular, let $\nu(S_-(L))\subset (S^3,\xi_{\std})$ be a standard neighborhood of $S_-(L)$.  We let $V_1$ be the solid torus obtained from this neighborhood via Legendrian surgery along $Q(S_+S_-(L))$, and let $V_2=S^3\setminus \nu(S_-(L))$.  Then $M=V_1\cup V_2$, and we claim that the common boundary $\partial V_1=\partial \nu(S_-(L))=\partial V_2$ is a mixed torus.  Indeed, consider the three convex tori $\partial \nu(L)$, $\partial \nu(S_-(L))$, and $\partial \nu(S_+S_-(L))$.  The tori $\partial \nu(L)$ and $\partial \nu(S_-(L))$ cobound a negative basic slice in $M$.  The tori $\partial \nu(S_-(L))$ and $\partial \nu(S_+S_-(L))$ cobound a positive basic slice in $\nu(S_-(L))$, but this may not survive to a basic slice in $M$, since $Q(S_+S_-(L))$ may not be disjoint from $\partial \nu(S_+S_-(L))$.  However, we can subdivide this basic slice to find a boundary parallel convex torus cobounding a positive basic slice with $\partial \nu(S_-(L))$ (c.f. \cite[Lemma 3.15]{etnyre2001knots}).  So $\partial \nu(S_-(L))$ sits between basic slices of opposite sign, and is therefore a mixed torus.\\

Now because of our assumptions that $p<q(\tb(L)-2)$ and that the Thurston-Bennequin number of $Q(S_+S_-(L))$ is maximal, \cite[Theorem 5.16]{etnyre2018legendrian} tells us that $\tb(Q(S_+S_-(L))=pq$.  So the Legendrian surgery used to produce $V_1$ from $\nu(S_-(L))$ is smoothly $pq-1$-surgery.  According to Lemmas 7.2 and 7.3 of \cite{gordon1983dehn}, $V_1$ is then a solid torus $D^2\times S^1$ whose meridional curves have slope $(pq-1)/q^2$ in the coordinates of $\partial \nu(S_-(L))$ given by the meridian $\mu$ and the preferred longitude $\lambda$.  We now apply
\[
\left(\begin{matrix}
1 & 0\\ 1-\tb(L) & 1
\end{matrix}\right)\in SL(2,\mathbb{Z})
\]
to the coordinates of $\partial \nu(S_-(L))$.  In the original coordinates, the dividing curves of $\partial \nu(L)$ and $\partial \nu(S_-(L))$ had slopes $1/\tb(L)$ and $1/(\tb(L)-1)$, respectively.  In our new coordinates we find that $\Gamma_{\partial \nu(L)}$ has slope 1, $\Gamma_{\partial \nu(S_-(L))}$ is vertical, and the meridional slope of $\mu_{V_1}$ is represented by the vector $(pq-1+q^2(1-\tb(L)),q^2)$ in $\mathbb{Z}^2$.\\

\begin{figure}
\centering
\begin{tikzpicture}[scale=3]
\draw[thin] (0,-1) -- (0,1);
\draw[thin] (-1,0) -- (1,0);

\draw[thick] (-1,0) -- (1,0);
\node[above] at (1,0) {$\mu_S$};

\draw[thick] (-.707,-.707) -- (.707,.707);
\node[right] at (.707,.707) {$\Gamma_{\partial \nu(L)}$};

\draw[thick] (0,-1) -- (0,1);
\node[right] at (0,.8) {$\Gamma_{\partial \nu(S_-(L))}$};

\draw[thick, dashed] (-0.868,0.496) -- (0.868,-0.496);
\node[below] at (-0.868,0.496) {$\mu_{V_1}$};

\draw [thick,domain=0:150.26] plot ({0.6*cos(\x)}, {0.6*sin(\x)});
\draw [thick,domain=45:180] plot ({0.3*cos(\x)}, {0.3*sin(\x)});
\end{tikzpicture}
\caption{If the slope of $\mu_S$ were negative, then $V_1\cup S$ would be overtwisted; if the slope were positive, then $V_2\cup S$ would be overtwisted.  So $\mu_S$ is horizontal.}
\label{fig:slopes}
\end{figure}

Having made these preparations, we now suppose that $(W,\omega)$ is an exact symplectic filling of $(M,\xi)$.  Applying Theorem \ref{thm:jsj} to this filling yields $(W',\omega')$, an exact symplectic filling of its boundary $(M',\xi')$, which we may write as
\[
M' = M_1 \sqcup M_2 := (V_1\cup S) \sqcup (V_2\cup S),
\]
for some identifications $\partial S\to\partial V_i$, where $S$ is a solid torus.  The gluing maps $\partial S\to\partial V_i$ identify dividing curves, but the meridian $\mu_{S}$ of $S$ could in principle take any number of values.  Our first observation is that, because $\Gamma_{V_i}$ is vertical, $\mu_{S}=(1,m)\in\mathbb{Z}^2$ for some $m\in\mathbb{Z}$.  Next, the fact that $(M',\xi')$ is fillable means that each of $M_1$ and $M_2$ is tight.  On $M_1$, we see that as we move from the core of $S$ to $\partial V_1$ and then towards the core of $V_1$, the contact planes rotate from the slope of $\mu_S$ towards that of $\Gamma_{V_1}$, and finally towards the slope of $\mu_{V_1}$.  Because of our assumption that $p<q(\tb(L)-2)$, we find that $-1<\mu_{V_1}<0$.  Tightness demands that the total rotation of the contact planes is through an angle smaller than $\pi$, meaning that $m\geq 0$.  See Figure \ref{fig:slopes}.  On $M_2$ we see that the contact planes rotate counterclockwise from 1, the slope of $\Gamma_{\partial \nu(L)}$, to the slope of $\Gamma_{\partial \nu(S_-(L))}$, and finally to the slope $m$ of $\mu_S$.  Because this rotation must be smaller than $\pi$, we see that $m\leq 0$.  So we conclude that $m=0$.\\

Because the solid torus $S$ is attached with slope $m=0$, we find that $M_1=L(q^2,pq-1)$ and $M_2=V_2\cup S=S^3$.  Moreover, we see from the definition of $M_1$ that $M_1$ results from surgery on $(S^2\times S^1,\xi_{\std})$ along $\zeta(Q)$, as described in Section \ref{sec:intro}.  So $M_1\cong(L(q^2,pq-1),\xi_Q)$; on $S^3$ we have the unique tight contact structure $\xi_{\std}$.  Now Theorem \ref{thm:jsj} tells us that we recover $(W,\omega)$ from $(W',\omega')$ by attaching a round symplectic 1-handle along the cores of the two copies of $S$ --- one in $M_1$ and the other in $M_2$.  In $M_1$ this core is given by the image of $\{\mathrm{pt}\}\times S^1\subset(S^2\times S^1,\xi_{\std})$ after performing surgery along $\zeta(Q)\subset S^2\times S^1$.  That is, the core of $M_1$ is the image of $K$ after surgery, which we abusively call $K$.  In $M_2$ the core is given by $S_-(L)$.  We attach the round symplectic 1-handle by first attaching a Weinstein 1-handle along a pair of points $x\in K$ and $y\in S_-(L)$, and then attaching a Weinstein 2-handle along the resulting knot $K\#S_-(L)$.  So we obtain $(W,\omega)$ from $(W',\omega')$ by attaching a Weinstein 2-handle to
\[
(L(q^2,pq-1),\xi_Q)\#(S^3,\xi_{\std}) \cong (L(q^2,pq-1),\xi_Q)
\]
along $K\# S_-(L)$.  Since this is precisely the knot $L_Q$ identified in the statement of Theorem~\ref{thm:cables}, our proof is complete.

\subsection{Seifert fibered spaces over $S^2$}\label{subsec:sfs-proofs}
Before proceeding to the proofs of the results in Section~\ref{subsec:sfs}, we first recall in Section~\ref{subsubsec:mixed-structures} what it means for a tight contact structure $\xi$ on a Seifert fibered space $M=M(\frac{q_1}{p_1},\cdots,\frac{q_n}{p_n})$ to be thoroughly or lightly mixed, and we identify the universally tight contact structures on small Seifert fibered spaces.  Sections~\ref{subsubsec:positive-euler} and~\ref{sec:negative-euler} then contain the proofs in the $e_0\geq 0$ and $e_0\leq -3$ cases, respectively.

\subsubsection{Mixed contact structures on Seifert fibered spaces}\label{subsubsec:mixed-structures}
As in Section \ref{sec:intro}, we take $n\geq 3$, $q_i,p_i>0$ coprime, and assume that $q_i<p_i$ for $i=1,\ldots,n-1$.  We also have continued fraction expansions as in (\ref{eq:continued-fraction}), and we denote by $e_0=\lfloor\frac{q_n}{p_n}\rfloor$ the Euler number of $M$.\\

To accommodate for the fact that we may have $q_n>p_n$, we introduce auxiliary coefficients $b_0^n,\ldots,b_{l'_n}^n$ defined by
\[
-\frac{p_n}{q_n}=[a_{0}^{n}, a_{1}^{n}, ..., a_{l_{n}}^{n}] = [-1, -2, ..., -2, b_{0}^{n}-1,  b_{1}^{n}, ..., b_{l'_{n}}^{n}],
\]
where $l'_n=l_n-e_0$, meaning that the number of $-2$s preceding $b_0^n-1$ is $e_0-1$.\\

In Section~\ref{subsec:sfs}, we defined thoroughly mixed tight contact structures via surgery diagrams; we now present these structures as those which result from a particular construction.  We let $\Sigma$ be a planar surface with $n$ boundary components, and write
\[
-\partial(\Sigma\times S^1) = T_1 + T_2 + \cdots + T_n
\]
for the torus boundary components of $\Sigma\times S^1$.  Now let $\xi$ be an $S^1$-invariant, virtually overtwisted tight contact structure on $\Sigma\times S^1$ such that
\begin{enumerate}[label=(\arabic*)]
	\item each $T_i$ is a minimal convex torus, with dividing curves of slope $-1$ for $i<n$ and slope $-e_0-1$ for $i=n$;
	\item adjacent to each $T_i$ is a positive basic slice $L_i$, with $\partial L_i= T_i-T'_i$;
	\item each $T'_i$ is a minimal convex torus, with dividing curves of slope $\infty$.
\end{enumerate}
Such a contact structure exists by \cite[Section 5]{honda2000classification2}.\\

\begin{figure}
\centering
\begin{tikzpicture}
\begin{scope}[scale=1.25]
\draw (0,0) ellipse (6 and 3);
\draw (0,0) ellipse (5.5 and 2.5);
\node[above] at (0,-2.5) {$\infty$};
\node[below] at (0,-2.5) {$+$};
\node[below] at (0,-3) {$-e_0-1$};
\node[above] at (2,-2.375) {$T_n'$};
\node[below] at (2,-2.875) {$T_n$};
\end{scope}

\begin{scope}[xshift=-4cm,scale=0.85]
\draw (0,0) circle (0.75);
\draw (0,0) circle (1.15);
\node[below] at (0,0.75) {$-1$};
\node[above] at (0,-0.75) {$T_1$};
\node[above] at (0,-1.2) {$+$};
\node[below] at (0,-1.15) {$T_1'$};
\node[above] at (0,1.15) {$\infty$};
\end{scope}

\begin{scope}[xshift=-1cm,scale=0.85]
\draw (0,0) circle (0.75);
\draw (0,0) circle (1.15);
\node[below] at (0,0.75) {$-1$};
\node[above] at (0,-0.75) {$T_2$};
\node[above] at (0,-1.2) {$+$};
\node[below] at (0,-1.15) {$T_2'$};
\node[above] at (0,1.15) {$\infty$};
\end{scope}

\begin{scope}[xshift=4cm,scale=0.85]
\draw (0,0) circle (0.75);
\draw (0,0) circle (1.15);
\node[below] at (0,0.75) {$-1$};
\node[above] at (0,-0.75) {$T_{n-1}$};
\node[above] at (0,-1.2) {$+$};
\node[below] at (0,-1.15) {$T_{n-1}'$};
\node[above] at (0,1.15) {$\infty$};
\end{scope}

\filldraw[color=black, fill=black](0.5,0) circle (0.1);
\filldraw[color=black, fill=black](1.5,0) circle (0.1);
\filldraw[color=black, fill=black](2.5,0) circle (0.1);
\end{tikzpicture}
\caption{The first layer of basic slices attached to $\Sigma\times S^1$.}
\label{fig:sigma}
\end{figure}

For each $i=1,\ldots,n-1$, we will attach $-2-a_0^i$ basic slices to $(\Sigma\times S^1,\xi)$, with slopes
\[
-1,-\frac{1}{2},-\frac{1}{3},\ldots,\frac{1}{a_0^i+1},
\]
starting at $T_i$.  Similarly, we attach $-2-b_0^n$ basic slices, starting from $T_n$, with slopes
\[
-e_0-1,-e_0-\frac{1}{2},\ldots,-e_0+\frac{1}{b_0^n+1}.
\]
For $i=1,\ldots,n$, we call the boundary of the outermost basic slice $T_i''$.  Finally, we let $V_i$ be a solid torus and choose a tight contact structure on $V_i$ such that $\partial V_i$ is minimal, convex, and has dividing curves of slope
\[
[a_{l_i}^i,a_{l_i-1}^i,\ldots,a_2^i,a_1^i+1]
\]
for $1\leq i \leq n-1$, and slope
\[
[b_{l_n'}^n,b_{l_n'-1}^n,\ldots,b_2^n,b_1^n+1]
\]
for $i=n$.  Notice that there are $\vert\prod_{j=1}^{l_i}(a_j^i+1)\vert$ (respectively, $\vert\prod_{j=1}^{l_n'}(b_j^n+1)\vert$) such tight structures on $V_i$, per Honda's classification \cite{honda2000classification}.  We then attach each $V_i$ to $\Sigma\times S^1$ by identifying the dividing curves and meridians of $\partial V_i$ with those of $T_i''$.  The result is a tight contact structure on $M$, and we call any structure resulting from this construction \emph{thoroughly mixed}.  It is not difficult to check that these are precisely the structures identified in Section~\ref{subsec:sfs}.\\

Note that this construction is not unique.  For instance, we may shuffle the order in which we attach basic slices within a given continued fraction block without changing our contact structure.  But the important feature is that by ensuring that the innermost basic slice around each boundary component is positive, we may find $n$ mixed tori.

\begin{lemma}\label{lemma:mixed}
In a thoroughly mixed tight contact structure, each torus $T_i'$, $1\leq i\leq n$, is a mixed torus with vertical dividing curves.
\end{lemma}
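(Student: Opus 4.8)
The statement has two parts --- that each $T_i'$ has vertical dividing curves, and that it is a mixed torus --- and the first is essentially built into the construction: by property~(3), $T_i'$ is a minimal convex torus whose dividing curves have slope $\infty$, i.e.\ run parallel to the regular fiber of $M$, which is what ``vertical'' means. The work is therefore to produce, for each $i$, a virtually overtwisted neighborhood $T^2\times[0,2]$ of $T_i'$ with $T_i'=T^2\times\{1\}$ and with $T^2\times[0,1]$ and $T^2\times[1,2]$ both basic slices. The plan is to assemble this from the basic slice $L_i$ on the side of $T_i'$ facing the singular fiber, together with a second, oppositely-signed basic slice lying in the $S^1$-invariant piece on the other side.

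One of the two basic slices is $L_i$ itself: by property~(2) it is a positive basic slice with $\partial L_i=T_i-T_i'$, so its boundary slopes are $\infty$ at $T_i'$ and $-1$ at $T_i$ (for $i=n$, replace $-1$ by $-e_0-1$, still a Farey neighbor of $\infty$). The other slice I would extract from the hypothesis that $\xi|_{\Sigma\times S^1}$ is $S^1$-invariant and virtually overtwisted. For $S^1$-invariant structures on $\Sigma\times S^1$, virtual overtwistedness is detected (Honda \cite{honda2000classification2}) by a non-removable dividing curve on $\Sigma$ parallel to a boundary component across which the two adjacent sign regions disagree; such a curve sits near each $\partial_i\Sigma$, and it is precisely what makes $L_i$ a genuine basic slice inside an otherwise vertically foliated collar. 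Since the slice just outside this curve is $L_i$, which we have arranged to be positive, the region immediately interior to $T_i'$ carries the opposite, negative sign; cutting it along a convex torus $T_i''$ parallel to and slightly interior to $T_i'$ exhibits a \emph{negative} basic slice $N_i$ with $\partial N_i=T_i'-T_i''$ of slope $\infty$ at $T_i'$.

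It remains to check that $T_i''$ can be chosen so that the slope triple $(s_0,s_1,s_2)$ of $T^2\times[0,2]:=N_i\cup L_i$, with $T_i''=T^2\times\{0\}$, $T_i'=T^2\times\{1\}$, $T_i=T^2\times\{2\}$, is an efficient path in the Farey graph lying in a single continued fraction block: pushing $N_i$ exactly one step past $\infty$ continues the block of $L_i$, so that $L_i$ and $N_i$ sit in a common block. As $L_i$ and $N_i$ then lie in one block but carry opposite signs, Honda's classification of tight contact structures on $T^2\times I$ \cite{honda2000classification} shows $T^2\times[0,2]$ is tight but not universally tight, hence virtually overtwisted; so $T_i'$ is a mixed torus.

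I expect the real obstacle to be the middle step: verifying, from the explicit local model of the $S^1$-invariant structure in \cite[Section~5]{honda2000classification2}, that the region interior to $T_i'$ genuinely furnishes a \emph{negative} basic slice continuing the continued fraction block of $L_i$ --- not merely another convex torus of the same vertical slope --- and tracking the framings carefully; the $i=n$ case in particular requires extra care, since $T_n$ has slope $-e_0-1$ rather than $-1$, which is why one expects Theorem~\ref{thm:jsj} to be applied only at $T_1',\ldots,T_{n-1}'$.
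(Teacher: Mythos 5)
Your overall strategy is the paper's: exhibit $T_i'$ as sandwiched between the given positive basic slice $L_i$ and a negative basic slice carved out of the interior of $\Sigma\times S^1$, with slopes forming the triple $(-e_0-1,\infty,1)$ (or $(-1,\infty,1)$). But the step you flag as ``the real obstacle'' is precisely the content of the lemma, and your proposed route around it does not work as stated. For $n\geq 3$ the region of $\Sigma\times S^1$ lying on the interior side of $T_1',\ldots,T_n'$ is a planar piece with $n$ boundary components, not a toric annulus, so there is no basic slice ``immediately interior to $T_i'$'' waiting to be found by a parallel pushoff: a convex torus $T_i''$ parallel to and slightly interior to $T_i'$ generically cobounds with $T_i'$ an $I$-invariant toric annulus with both boundary slopes $\infty$, which is not a basic slice, and pushing ``one step past $\infty$'' presupposes a bypass whose existence, attaching slope, and sign you have not produced. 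Your appeal to a boundary-parallel dividing curve on $\Sigma$ detecting virtual overtwistedness gestures at the right mechanism but never yields a second convex torus with a definite slope, nor does it pin down the sign of the intervening layer.

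The paper fills this gap concretely (for $T_n'$; the other indices are symmetric). Take vertical annuli $A_1,\ldots,A_{n-2}$ with horizontal dividing curves chaining $T_1,\ldots,T_{n-1}$ together, and set $N=N(T_1\cup\cdots\cup T_{n-1}\cup A_1\cup\cdots\cup A_{n-2})$. Planarity of $\Sigma$ gives $\partial N=T_1\cup\cdots\cup T_{n-1}\cup T'$ with $T'$ parallel to $T_n$, and edge-rounding computes the slope of $\Gamma_{T'}$ to be $1$ in the coordinates of $T_n$. The complement $(\Sigma\times S^1)\setminus(N\cup L_n)$ is then a toric annulus with boundary slopes $1$ and $\infty$, hence a single basic slice; its sign must be negative, since all of the $L_i$ are positive and a positive sign here would make every basic slice in the decomposition of $(\Sigma\times S^1,\xi)$ positive, contradicting the hypothesis that this $S^1$-invariant structure is virtually overtwisted. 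This is the computation your sketch defers. (A side remark: the paper proves the lemma for $T_n'$ itself and applies the splitting of Theorem \ref{thm:thoroughly-mixed} at $T_1'$, so no special exclusion of the $i=n$ case is needed.)
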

\begin{proof}
We show that $T_n'$ is mixed; the other tori are similar.  In $\Sigma\times S^1$, consider a collection $A_1,\ldots,A_{n-2}$ of vertical annuli as in Figure \ref{fig:thoroughly-mixed}, with $A_i$ connecting $T_i$ to $T_{i+1}$.  Each annulus will have parallel horizontal dividing curves, and we consider the neighborhood
\[
N=N(T_1\cup\cdots\cup T_{n-1}\cup A_1\cup\cdots\cup A_{n-2}),
\]
whose boundary is given by $\partial N=T_1\cup\cdots\cup T_{n-1}\cup T'$.  Here $T'$ has dividing curves of slope $1$, measured in the coordinates of $T_n$.  Because each of the basic slices $L_i$ is positive, the toric annulus
\[
(\Sigma\times S^1)\setminus(N\cup L_n)
\]
is a negative basic slice with boundary slopes $\infty$ and $1$.  So $T_n'$ is sandwiched between basic slices of opposite sign whose slopes are $-e_0-1,\infty$, and $1$, meaning that $T_n'$ is a mixed torus.
\end{proof}

\begin{figure}
\centering
\begin{tikzpicture}
\begin{scope}[scale=1.25]
\draw (0,0) ellipse (6 and 3);
\draw (0,0) ellipse (5.5 and 2.5);
\draw[dashed] (0,0) ellipse (5 and 2);
\node[above] at (0,-2.5) {$\infty$};
\node[below] at (1,-2.5) {$+$};
\node[below] at (0,-3) {$-e_0-1$};
\node[above] at (2,-2.375) {$T_n'$};
\node[below] at (2,-2.875) {$T_n$};
\node[above] at (1,-2.5) {$-$};
\node[above] at (0,-2) {$1$};
\node[above] at (2,-1.875) {$T'$};
\end{scope}

\begin{scope}[xshift=-4cm,scale=0.85]
\draw (0,0) circle (0.75);
\draw (0,0) circle (1.15);
\node[below] at (0,0.75) {$-1$};
\node[above] at (0,-0.75) {$T_1$};
\node[above] at (0,-1.2) {$+$};
\node[below] at (0,-1.15) {$T_1'$};
\node[above] at (0,1.15) {$\infty$};
\draw[dashed] (0.75,0) -- (2.75,0);
\node[above] at (1.75,0) {$A_1$};
\end{scope}

\begin{scope}[xshift=-1cm,scale=0.85]
\draw (0,0) circle (0.75);
\draw (0,0) circle (1.15);
\node[below] at (0,0.75) {$-1$};
\node[above] at (0,-0.75) {$T_2$};
\node[above] at (0,-1.2) {$+$};
\node[below] at (0,-1.15) {$T_2'$};
\node[above] at (0,1.15) {$\infty$};
\draw[dashed] (0.75,0) -- (1.75,0);
\node[above] at (1.35,0) {$A_2$};
\end{scope}

\begin{scope}[xshift=4cm,scale=0.85]
\draw (0,0) circle (0.75);
\draw (0,0) circle (1.15);
\node[below] at (0,0.75) {$-1$};
\node[above] at (0,-0.75) {$T_{n-1}$};
\node[above] at (0,-1.2) {$+$};
\node[below] at (0,-1.15) {$T_{n-1}'$};
\node[above] at (0,1.15) {$\infty$};
\draw[dashed] (-0.75,0) -- (-1.75,0);
\node[above left] at (-1,0) {$A_{n-2}$};
\end{scope}

\filldraw[color=black, fill=black](1,0) circle (0.05);
\filldraw[color=black, fill=black](1.5,0) circle (0.05);
\filldraw[color=black, fill=black](2,0) circle (0.05);
\end{tikzpicture}
\caption{Each torus $T_i'$ is mixed.}
\label{fig:thoroughly-mixed}
\end{figure}

Recall that we also defined \emph{lightly mixed} tight contact structures in Section~\ref{subsec:sfs}; for convenience, we repeat the definition here.

\begin{definition}
Let $\xi$ be a tight contact structure on $M(\frac{q_1}{p_1},\cdots,\frac{q_n}{p_n})$.  We will call $\xi$ \emph{lightly mixed} if $\xi$ is not thoroughly mixed, but admits a Stein filling as in Figure \ref{fig:seifert-filling} for which at least $n-2$ of $K_1,\ldots,K_n$ have been stabilized both positively and negatively.  We say that $\xi$ is \emph{lightly mixed about $K_i$ and $K_j$} to indicate that $\xi$ admits a Stein filling for which each of $K_1,\ldots,K_n$ except $K_i$ and $K_j$ have been stabilized positively and negatively.
\end{definition}

Consider the tight contact structures on a small Seifert manifold $M=M(\frac{q_1}{p_1},\frac{q_2}{p_2},\frac{q_3}{p_3})$, with $p_i,q_i>0$ chosen as above, so that $e_0\geq 0$.  According to \cite{ghiggini2006classification}, each of these can be represented as in Figure \ref{fig:seifert-filling}.  Let $K_3'$ be the nearest unknot adjacent to $K_3$ which has been stabilized --- meaning that $K_3'=K_3$ if $e_0=0$.  If each of $K_1,K_2,K_3'$ has been stabilized positively at least once (or, according to the classification in \cite{ghiggini2006classification}, if each has been stabilized negatively at least once), then the tight contact structure is thoroughly mixed.  On the other hand, if one of $K_1,K_2,K_3$ has been stabilized both positively and negatively while the other two have stabilizations of a single sign (the signs on the two knots being opposite), then the tight structure is lightly mixed.  This leaves precisely $6\vert\Pi_{i=1}^3\Pi_{j=1}^{l_i}(a_j^i+1)\vert$ tight contact structures on $M$ which are neither lightly nor thoroughly mixed.  In these structures, each of $K_1,K_2,K_3'$ has all of its stabilizations of a single sign, but the three knots do not all use the same sign.  If the stabilizations of adjacent knots in Figure \ref{fig:seifert-filling} always match, then the following lemma says that we have a universally tight contact structure; note that there are precisely six such structures.

\begin{figure}
\centering
\begin{tikzpicture}
\begin{scope}[scale=1.25]
\draw[dashed] (0,0) ellipse (5 and 3);
\draw (0,0) ellipse (4.5 and 2.5);
\draw (0,0) ellipse (4 and 2);
\node[above] at (0,-2.5) {$s_3$};
\node[below] at (0,-3) {$0$};
\node[above] at (2,-2.25) {$T_3$};
\node[above] at (1,-2.5) {$-$};
\node[above] at (0,-2) {$\infty$};
\node[above] at (2,-1.75) {$T_3'$};
\end{scope}

\begin{scope}[xshift=-2cm,scale=1.15]
\draw (0,0) circle (0.75);
\draw (0,0) circle (1.15);
\draw[dashed] (0,0) circle (0.35);
\node[below] at (0,0.35) {$0$};
\node[above] at (0,0.75) {$s_1$};
\node[above] at (0,-0.85) {$T_1$};
\node[above] at (0,-1.2) {$+$};
\node[below] at (0,-1.15) {$T_1'$};
\node[above] at (0,1.15) {$\infty$};
\end{scope}

\begin{scope}[xshift=2cm,scale=1.15]
\draw (0,0) circle (0.75);
\draw (0,0) circle (1.15);
\draw[dashed] (0,0) circle (0.35);
\node[below] at (0,0.35) {$0$};
\node[above] at (0,0.75) {$s_2$};
\node[above] at (0,-0.85) {$T_2$};
\node[above] at (0,-1.2) {$+$};
\node[below] at (0,-1.15) {$T_2'$};
\node[above] at (0,1.15) {$\infty$};
\end{scope}
\end{tikzpicture}
\caption{The surface $\Sigma\times S^1$ sits inside of a Seifert fibered space $M(\frac{q_1}{p_1},\frac{q_2}{p_2},\frac{q_3}{p_3})$ which is neither lightly nor thoroughly mixed.  This surface may be extended to $\tilde{\Sigma}\times S^1$, whose boundary components have horizontal dividing curves.}
\label{fig:universally-tight}
\end{figure}

\begin{lemma}\label{lemma:ut}
Let $\xi$ be a tight contact structure on $M(\frac{q_1}{p_1},\frac{q_2}{p_2},\frac{q_3}{p_3})$ for some $0<q_i,p_i$, with $q_i<p_i$ for $i=1,2$, which is neither lightly mixed nor thoroughly mixed.  If each of the horizontal links in Figure \ref{fig:seifert-filling} has stabilizations of only one sign, then $\xi$ is universally tight.
\end{lemma}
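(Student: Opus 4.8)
The plan is to show that $\xi$ is isotopic to a contact structure which is $S^1$-invariant away from standard neighborhoods of the singular fibers, and then to invoke two standard facts: an $S^1$-invariant tight contact structure on $\Sigma\times S^1$ whose dividing multicurve on $\Sigma$ has no homotopically trivial component is universally tight (Giroux, Honda), and gluing such a piece to a solid torus carrying the unique tight contact structure compatible with its convex boundary preserves universal tightness (Honda's gluing theorem). The role of the hypothesis will be to guarantee that the relevant toric annuli in the decomposition of $(M,\xi)$ are all universally tight, so that no virtual overtwistedness can be hidden in them.

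First I would fix the vertical decomposition of $M$ read off from Figure \ref{fig:seifert-filling}: removing a standard tubular neighborhood $V_i$ of each singular fiber leaves $P\times S^1$ with $P$ a pair of pants, and $\xi$ restricts on $P\times S^1$ to an $S^1$-invariant structure on a sub-region $\Sigma\times S^1$ together with toric annuli $N_i$, built from the basic slices produced by the stabilizations of the chain $[a_0^i,a_1^i,\ldots,a_{l_i}^i]$, interpolating between $\partial(\Sigma\times S^1)$ and $\partial V_i$. After making all tori convex, let $T_i$ (dividing slope $s_i$) and $T_i'$ (dividing slope $\infty$) bound the innermost basic slice $L_i$ adjacent to $\Sigma\times S^1$, as in Figure \ref{fig:universally-tight}. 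Because each horizontal link carries stabilizations of a single sign, each $L_i$ has a well-defined sign $\epsilon_i\in\{+,-\}$, and more generally each $N_i$ is a concatenation of continued-fraction blocks, each a single-signed and hence universally tight toric annulus. Since $\xi$ is neither lightly nor thoroughly mixed, the $\epsilon_i$ are not all equal; after relabeling we may assume $\epsilon_1=\epsilon_2=+$ and $\epsilon_3=-$.

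The heart of the argument is to enlarge $\Sigma\times S^1$ to a pair-of-pants bundle $\tilde\Sigma\times S^1$ whose boundary tori carry horizontal dividing curves, as indicated in Figure \ref{fig:universally-tight}. Take a vertical convex annulus $A$ joining $T_1$ to $T_2$; the bypasses along the two sides of $A$ coming from $L_1$ and $L_2$ have the same sign, and attaching them across $A$ --- as in the proof of Lemma \ref{lemma:mixed}, but now using the coincidence of the signs rather than their discrepancy --- pushes the boundary of $\Sigma\times S^1$ past the first two singular fibers and lets us re-cut along a convex torus encircling both, whose dividing slope has become horizontal. Absorbing, in the same fashion, the single-signed continued-fraction blocks of $N_3$ and the $S^1$-invariant collar of $T_3$ out to a torus with horizontal dividing set, we obtain $\tilde\Sigma\times S^1$ on which $\xi$ is isotopic to an $S^1$-invariant structure whose dividing multicurve on $\tilde\Sigma$ has no homotopically trivial component. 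Its complement in $M$ is three solid tori, each with convex boundary having horizontal dividing curves, hence each carrying the unique such tight structure, which is universally tight with no boundary-parallel bypass. The gluing theorem then yields that $\xi$ is universally tight.

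I expect the enlargement step to be the main obstacle. One must run the bypass-attachment and annulus-cutting argument carefully enough to confirm both that the matching of the signs of $L_1$ and $L_2$ is exactly what prevents a mixed toric annulus from appearing in the process --- this is where the hypothesis that every horizontal link is single-signed is used in an essential way --- and that the dividing slope of the new boundary torus is genuinely horizontal, so that the resulting piece is $S^1$-invariant and not merely universally tight. A subsidiary point is to check that when the sign of an interior chain link $a_j^i$ disagrees with $\epsilon_i$, the relevant basic slices still lie in a common continued-fraction block, so that Honda's shuffling moves isotope $\xi$ to a configuration in which adjacent links match; this reconciles the argument with the count of six universally tight structures on $M(\frac{q_1}{p_1},\frac{q_2}{p_2},\frac{q_3}{p_3})$ among those that are neither lightly nor thoroughly mixed.
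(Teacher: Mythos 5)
Your outline matches the paper's up to the construction of $\tilde\Sigma\times S^1$: decompose $M$ into $\Sigma\times S^1$, toric annuli, and solid tori around the singular fibers; use the single-sign hypothesis to control each piece; enlarge to a pair-of-pants bundle whose boundary tori have horizontal dividing curves; and invoke Honda's Lemma 5.1 that such $S^1$-invariant structures are universally tight. However, two of your steps have genuine problems. First, the enlargement mechanism you describe --- attaching bypasses across a vertical annulus $A$ joining $T_1$ to $T_2$ and ``re-cutting along a convex torus encircling both'' singular fibers --- is not what is needed and is inconsistent with your own later claim that the complement of $\tilde\Sigma\times S^1$ is three solid tori: a torus encircling two singular fibers is not a boundary component of a pair of pants separating all three. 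The paper's enlargement is done one boundary component at a time, by absorbing into $\Sigma\times S^1$ the basic slices of $V_i$ out to a convex torus with horizontal dividing curves; these all have sign $\epsilon_i$ because the stabilizations feeding $V_i$ are single-signed. No annulus between distinct $T_i$, and no matching of $\epsilon_1$ with $\epsilon_2$, enters: Honda's Lemma 5.1 applies to the $S^1$-invariant model with horizontal boundary however the signs are distributed among the three boundary components (and here they cannot all agree, since $\xi$ is not thoroughly mixed). Your claim that the coincidence $\epsilon_1=\epsilon_2$ is used ``in an essential way'' at this step is a red herring.

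Second, and more seriously, your concluding gluing step fails. The complementary solid tori are still neighborhoods of the singular fibers whose tight structures are governed by the continued-fraction blocks coming from the chains $a_1^i,\ldots,a_{l_i}^i$; a convex boundary with horizontal dividing curves does not force such a solid torus to carry a unique tight structure (Honda's count for these neighborhoods is $\vert\prod_{j\geq 1}(a_j^i+1)\vert$, which exceeds $1$ as soon as some $a_j^i\leq -3$). What the hypothesis actually buys is that each of these solid tori is \emph{universally tight} --- all of its stabilizations have one sign --- which is strictly weaker than uniqueness. Moreover, there is no off-the-shelf gluing theorem asserting that a universally tight $S^1$-invariant piece glued along convex tori to universally tight solid tori stays universally tight; tightness is not preserved by such gluings in general, and controlling this is exactly the delicate point. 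The paper takes a different route here: $\Sigma\times S^1$ is universally tight because it embeds in $\tilde\Sigma\times S^1$, each $V_i$ is universally tight because its stabilizations are single-signed, and $\pi_1(\Sigma\times S^1)\to\pi_1(M)$ is surjective, from which universal tightness of $(M,\xi)$ is deduced. Relatedly, your ``subsidiary point'' --- that an interior chain link whose sign disagrees with $\epsilon_i$ can be repaired by shuffling --- is false: basic slices in distinct continued-fraction blocks cannot be shuffled past one another, and a solid torus containing two blocks of opposite sign is virtually overtwisted. The hypothesis must instead be read as saying that all stabilizations along each entire horizontal chain carry a single sign, so that this situation is excluded by assumption rather than repaired.
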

\begin{proof}
Notice that the Euler number of $M$ satisfies $e_0\geq 0$.  Per the classification of tight contact structures due to Wu \cite{wu2004tight} and Ghiggini-Lisca-Stipsicz \cite{ghiggini2006classification} on such Seifert fibered spaces, we may write
\[
M = M\left(\frac{q_1}{p_1},\frac{q_2}{p_2},\frac{q_3}{p_3}\right) \cong (\Sigma\times S^1) \cup_{(\varphi_1\cup\varphi_2\cup\varphi_3)} (V_1\cup V_2\cup V_3),
\]
where each $V_i$ is a soid torus, $-\partial(\Sigma\times S^1)=T_1+T_2+T_3$, and $\varphi_i\colon\partial V_i\to T_i$ is an orientation-preserving diffeomorphism.  Moreover, we may take $s_i$, the slope of the dividing curves of $T_i=\partial V_i$ in the coordinates of $T_i$, to satisfy
\[
\frac{1}{a_0^i+1}<s_i<-\frac{q_i}{p_i} \text{~for~} i=1,2,
\quad\text{and}\quad
-e_0+\frac{1}{b_0^3+1} < s_3 < -\frac{q_3}{p_3}.
\]
Here $a_0^i$ and $b_0^3$ are as above.  In particular, we have $s_1,s_2\in(-1,0)$ and $s_3\in(-e_0-1,-e_0)$.\\

Continuing to follow \cite[Section 3.3]{wu2004tight}, we may thicken each $V_i$ to a solid torus $V_i'$ such that $T_i':=\partial V_i'$ is a minimal convex torus with vertical dividing curves when measured in the coordinates of $T_i$.  Now $V_i'\setminus V_i$ is a toric annulus bounded by $T_i$ and $T_i'$ which we may factor into basic slices.  Because $\xi$ fails to be thoroughly or lightly mixed, all of the basic slices between $T_i$ and $T_i'$ must have the same sign, but the signs for $i=1,2,3$ are not all the same.  For instance, Figure \ref{fig:universally-tight} depicts a case where the basic slices between $T_i$ and $T_i'$ are positive for $i=1,2$, but negative for $i=3$.  Now consider attaching basic slices of matching sign to each $T_i$ until we obtain $\tilde{\Sigma}\times S^1$, whose boundary components all have horizontal dividing curves.  According to \cite[Lemma 5.1]{honda2000classification2}, $\tilde{\Sigma}\times S^1$ is universally tight; because $\tilde{\Sigma}\times S^1$ contains $\Sigma\times S^1$, we see that $\Sigma\times S^1$ is also universally tight.  Each solid torus $V_i$ is universally tight because the stabilizations used to produce the tight structure on $V_i$ are all of one sign.  Moreover, the homomorphism
\[
i_*\colon\pi_1(\Sigma\times S^1) \to \pi_1(M)
\]
induced by inclusion is a surjection.  We conclude that $M$ is universally tight.
\end{proof}

Following this observation, Corollary \ref{corollary:vot} follows from Corollaries \ref{cor:thorough} and \ref{cor:lightly}.\\

Finally, we consider the remaining contact structures on $M(\frac{q_1}{p_1},\frac{q_2}{p_2},\frac{q_3}{p_3})$ --- those which are neither thoroughly nor lightly mixed, and to which Lemma \ref{lemma:ut} does not apply.  All such contact structures are virtually overtwisted.

\begin{lemma}\label{lemma:vot}
Let $\xi$ be a tight contact structure on $M(\frac{q_1}{p_1},\frac{q_2}{p_2},\frac{q_3}{p_3})$, with surgery diagram as in Figure \ref{fig:seifert-filling}.  If any of the horizontal links have both positive and negative stabilizations, then $\xi$ is virtually overtwisted.
\end{lemma}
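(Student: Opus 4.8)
The plan is to recognize the hypothesis as saying that $\xi$ is one of the lightly or thoroughly mixed structures, produce the geometric input (a mixed torus), and then deduce virtual overtwistedness by comparison with the known classification of tight contact structures on small Seifert fibered spaces. First I would note that, since Figure~\ref{fig:seifert-filling} is a surgery diagram for the case $e_0\ge 0$, a presentation in which some horizontal link $K_j$ carries stabilizations of both signs forces $\xi$ to be lightly or thoroughly mixed: at least one of the three horizontal links has been stabilized both positively and negatively, which is exactly the defining condition for being lightly mixed (or thoroughly mixed, if two or three of the links are mixed). In the thoroughly mixed case Lemma~\ref{lemma:mixed} already exhibits mixed tori in $(M,\xi)$; in the lightly mixed case I would run the same annulus-neighborhood argument around the $j$-th singular fiber, after shuffling the positive basic slices of the $K_j$-block to lie innermost, producing a convex torus sandwiched between basic slices of opposite sign, hence a mixed torus.

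Next I would pass from the presence of a mixed torus to virtual overtwistedness. The clean route is to invoke the classification of tight contact structures on small Seifert fibered spaces with $e_0\ge 0$ due to Wu~\cite{wu2004tight} and Ghiggini--Lisca--Stipsicz~\cite{ghiggini2006classification}: every such structure is either universally tight or virtually overtwisted, and, by Honda's universal tightness criterion as used in the proof of Lemma~\ref{lemma:ut}, the universally tight ones are precisely those admitting a presentation as in Figure~\ref{fig:seifert-filling} in which every horizontal link is stabilized with a single sign. The number of positive stabilizations carried by $K_j$ is an isotopy invariant of $\xi$ --- the moves relating admissible presentations only reorder stabilizations within a single continued-fraction block and cannot cancel a positive against a negative stabilization on the same link --- so if $K_j$ has both signs this invariant is strictly between $0$ and its maximal value, which no single-signed presentation realizes. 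Hence $\xi$ is not universally tight, and therefore virtually overtwisted.

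I expect the main obstacle to be precisely the step from ``mixed torus'' to ``virtually overtwisted'' if one insists on a proof that does not cite the classification. The naive argument --- a neighborhood $T^2\times[0,2]$ of the mixed torus is a virtually overtwisted contact manifold, so its universal cover, sitting inside the universal cover of $M$, is overtwisted --- breaks down because the vertical tori bounding neighborhoods of the singular fibers are compressible in $M$: their meridians die under the map on $\pi_1$ induced by inclusion, so the universal cover of $M$ restricted to a component of the preimage of that neighborhood is only a $\mathbb{Z}$-cover of $T^2\times[0,2]$ rather than its universal cover, and need not be overtwisted. A self-contained argument would instead have to exhibit a suitable finite cover of $M$ --- for instance one induced by a manifold orbifold cover of the base orbifold $S^2(p_1,p_2,p_3)$ --- in which the mixed configuration becomes an honest overtwisted disk, and verifying this requires enough care that routing the proof through the classifications of Wu and Ghiggini--Lisca--Stipsicz is the economical choice.
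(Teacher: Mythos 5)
There are two genuine gaps here. First, you have misread the hypothesis: a ``horizontal link'' in Figure \ref{fig:seifert-filling} is the entire chain $K_i,a_1^i,\ldots,a_{l_i}^i$ in the $i$-th row, not just the knot $K_i$. The lemma is invoked in the paper precisely for structures in which every individual knot (in particular every $K_i$) carries stabilizations of a single sign but two knots in the same chain carry opposite signs; such structures are neither lightly nor thoroughly mixed, so your reduction to those two classes discards exactly the cases the lemma is needed for.

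Second, and more seriously, your route from the mixed torus to virtual overtwistedness is circular. The assertion that the universally tight structures are precisely those admitting a single-signed presentation cannot simply be quoted from Wu or Ghiggini--Lisca--Stipsicz: those papers classify the tight structures up to isotopy but do not decide which are universally tight --- that determination is exactly what Lemmas \ref{lemma:ut} and \ref{lemma:vot} together supply, so you are assuming the conclusion. Your backup argument, that the number of positive stabilizations on a given link is an isotopy invariant because the equivalences ``only reorder stabilizations within a continued-fraction block,'' is unjustified and in fact fails: the count $\vert(\prod_i(a_0^i+1)-\prod_i a_0^i)\prod_i\prod_j(a_j^i+1)\vert$ already encodes identifications that change signs across the knots $K_i$, and the paper explicitly uses such sign-changing moves at the end of Section 2. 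The difficulty you flag in your final paragraph is the real one, and the resolution you set aside is the paper's actual proof: decompose $M$ as $(\Sigma'\times S^1)\cup(V_1'\cup V_2'\cup V_3')$ with each $\partial V_i'$ having vertical dividing curves, observe that mixed signs anywhere in the $i$-th chain make the solid torus $V_i'$ virtually overtwisted (so its $p_i$-fold cover is overtwisted), and then invoke Lemma \ref{lemma:cover} --- every small Seifert fibered space has a finite cover whose induced fibration has no exceptional fibers --- to produce a finite cover of $M$ in which $V_i'$ lifts to copies of that overtwisted solid torus. Once Lemma \ref{lemma:cover} is in hand this is short, and it is the step your proposal is missing.
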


The proof of Lemma \ref{lemma:vot} will make use of the following topological fact about small Seifert fibered spaces.

\begin{lemma}\label{lemma:cover}
Any small Seifert fibered space $M$ admits a finite sheeted cover $\tilde{M}$ such that the induced Seifert fibration on $\tilde{M}$ has no exceptional fibers.
\end{lemma}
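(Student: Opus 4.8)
The plan is to realize $\tilde{M}$ as the pullback of the Seifert fibration along a finite manifold cover of the base orbifold. Present the Seifert fibration as an orbifold map $M \to \mathcal{O}$, where $\mathcal{O} = S^2(p_1,p_2,p_3)$ is the $2$-sphere with three cone points of orders $p_1,p_2,p_3 \ge 2$ and the exceptional fibers of $M$ lie over the cone points. First I would recall that $\mathcal{O}$ is a \emph{good} orbifold: having exactly three cone points it is neither a teardrop nor a bad spindle, so according to the sign of $\chi^{\mathrm{orb}}(\mathcal{O}) = -1 + \tfrac{1}{p_1} + \tfrac{1}{p_2} + \tfrac{1}{p_3}$ it carries a spherical, Euclidean, or hyperbolic structure, and $\pi_1^{\mathrm{orb}}(\mathcal{O})$ is realized as a finite subgroup of $O(3)$, a wallpaper group, or a Fuchsian triangle group. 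In each case this group contains a torsion-free subgroup $H$ of finite index --- trivially when it is finite, and by Selberg's lemma when it is infinite --- and the cover $\Sigma \to \mathcal{O}$ corresponding to $H$ is a closed orientable surface (of genus $0$, $1$, or $\ge 2$ respectively). A point I would verify from the local form of an orbifold covering is that $\Sigma \to \mathcal{O}$ is fully ramified over each cone point $x_i$, with ramification index exactly $p_i$; this is forced by $\Sigma$ being a manifold, since near $x_i$ a connected manifold cover of $D^2/(\mathbb{Z}/p_i)$ must be the degree-$p_i$ branched cover $w \mapsto w^{p_i}$.

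Next I would form $\tilde{M} := \Sigma \times_{\mathcal{O}} M$, the orbifold fibre product. Over $\mathcal{O}_0 := \mathcal{O} \setminus \{x_1,x_2,x_3\}$ the Seifert fibration is an honest circle bundle $M_0 \to \mathcal{O}_0$, and pulling it back along the genuine finite cover $\Sigma_0 \to \mathcal{O}_0$ (the restriction of $\Sigma \to \mathcal{O}$) produces a circle bundle $\tilde{M}_0 \to \Sigma_0$ together with a finite covering $\tilde{M}_0 \to M_0$. Over a disk neighbourhood of a cone point $x_i$, $M$ is a fibred solid torus, i.e. the quotient of $D^2 \times S^1$ by a free $\mathbb{Z}/p_i$-action rotating both factors; since $\Sigma \to \mathcal{O}$ restricts near a preimage of $x_i$ to the degree-$p_i$ branched cover of the base disk, the standard local computation shows that the pullback of this fibred solid torus is the product-fibred solid torus $D^2 \times S^1$ --- the ramification of the base disk exactly cancels the $\mathbb{Z}/p_i$-quotient, and the exceptional fibre disappears. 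Gluing these solid tori to $\tilde{M}_0$ yields a closed $3$-manifold $\tilde{M}$ which is an honest circle bundle over the closed surface $\Sigma$, and the maps assemble into a finite covering $\tilde{M} \to M$ of degree $[\,\pi_1^{\mathrm{orb}}(\mathcal{O}) : H\,]$. Hence the induced Seifert fibration on $\tilde{M}$ has no exceptional fibres, as required.

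I expect the main obstacle to be exactly the local analysis near the exceptional fibres in the second paragraph: one must check that the orbifold fibre product is genuinely a manifold there, that the exceptional fibre is absorbed, and that the resulting map of total spaces is an \emph{unbranched} covering even though the map of bases branches. This is the routine "the ramification in the base eats the multiplicity of the exceptional fibre" bookkeeping with Seifert invariants, but it carries all the content of the lemma. (Equivalently, one can run the argument group-theoretically: the regular fibre generates a normal cyclic subgroup of $\pi_1(M)$ with quotient $\pi_1^{\mathrm{orb}}(\mathcal{O})$, and the cover of $M$ associated to the preimage of $H$ is the desired circle bundle; but the verification that this cover has no exceptional fibres amounts to the same computation. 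The statement itself is classical in the theory of Seifert fibred spaces.)
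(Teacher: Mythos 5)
Your argument is correct, but it follows a genuinely different route from the paper's. You work entirely on the base: you observe that $S^2(p_1,p_2,p_3)$ is a \emph{good} geometric orbifold (this is the essential point --- the argument would fail for the bad orbifolds $S^2(p)$ and $S^2(p,q)$, $p\neq q$, which is why ``at least three cone points, or two of equal order'' matters), extract a torsion-free finite-index subgroup $H$ of the triangle group (trivially in the spherical case, via Selberg in the Euclidean/hyperbolic cases), and pull the fibration back along the resulting manifold cover $\Sigma\to\mathcal{O}$; the local cancellation of the base ramification against the $\mathbb{Z}/p_i$-quotient defining the fibered solid torus is exactly right and is the classical ``Seifert fibered spaces over good orbifolds are finitely covered by circle bundles'' argument. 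The paper instead splits on whether $\pi_1(M)$ is finite or infinite: the infinite case is delegated to a cited lemma of Brin, and in the finite case the authors take the universal cover $S^3\to M$, use the classification of Seifert fibrations of $S^3$ (base $S^2(\alpha_1,\alpha_2)$ with $\alpha_1\beta_2+\beta_1\alpha_2=1$) together with a classification of orbifold covers $S^2(a',b')\to S^2(a,b,c)$ to force the induced base to be $S^2(1,1)$. Your approach is more uniform and self-contained modulo standard orbifold theory; the paper's is shorter in the infinite case at the cost of an external citation, and in the finite case produces the explicit cover $S^3$ rather than an abstract one. Two small points worth flushing out in your write-up: the fiber product $\Sigma\times_{\mathcal{O}}M$ may be disconnected, so one should pass to a connected component (still a finite cover, still a circle bundle over a cover of $\Sigma$); and Selberg's lemma is heavier than necessary for triangle groups, where torsion-free finite-index (surface) subgroups can be exhibited directly. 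Neither affects correctness.
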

\begin{proof}
In case the fundamental group $\pi_1(M)$ is infinite, this follows from \cite[Lemma 2.4.22]{brin2007seifert}, so we focus on the case where $\pi_1(M)$ is finite.  The universal cover of a small Seifert fibered space with finite fundamental group is $S^3$, so we have a diagram
\[
\begin{tikzcd}
S^3 \arrow{r}{p} \arrow{d}{\tilde{\pi}} & M \arrow{d}{\pi}\\
\tilde{B} \arrow{r}{\overline{p}} & B
\end{tikzcd},
\]
where $B$ is $S^2$ with three cone points, $\pi\colon M\to B$ is the Seifert fibration on $M$, $p\colon S^3\to M$ is the covering map, and $\tilde{\pi}\colon S^3\to\tilde{B}$ is the induced Seifert fibration on $S^3$.  Because $\pi_1(M)$ is finite, we have $B=S^2(a,b,c)$ for some $(a,b,c)\in\{(2,2,n),(2,3,3),(2,3,4),(2,3,5)\}$.  That is, $M$ is a platonic Seifert fibered space.  The map $\overline{p}\colon\tilde{B}\to B$ is an orbifold covering map.  We notice that since $B$ has positive orbifold characteristic, the same is true of $\tilde{B}$, and also that $\tilde{B}$ has at most two cone points, since $\tilde{B}$ is the base of a Seifert fibration of $S^3$.  So $\overline{p}$ is a positive orbifold covering map of the form $S^2(a',b')\to S^2(a,b,c)$; such maps are classified by \cite[Proposition 5.5]{boyle2018virtual}, from which we conclude that $\tilde{B}=S^2(d,d)$ for some $d\geq 1$.\\

At the same time, we use \cite[Proposition 5.2]{geiges2018seifert} to write the Seifert fibration $\tilde{\pi}$ as
\[
M(0;(\alpha_1,\beta_1),(\alpha_2,\beta_2)),
\]
for some natural numbers $\alpha_1\geq \alpha_2$ and integers $\beta_1,\beta_2$ satisfying $0\leq\beta_1<\alpha_1$ and $\alpha_1\beta_2+\beta_1\alpha_2=1$.  The base of this Seifert fibration is $S^2(\alpha_1,\alpha_2)$, so we conclude that $\alpha_1=\alpha_2=d\geq 1$.  But this means that $d\beta_2+\beta_1d=1$, so we must have $d=1$.  We conclude that $\tilde{B}=S^2(1,1)$ has no cone points, and thus $\tilde{\pi}$ has no exceptional fibers.
\end{proof}

\begin{proof}[Proof of Lemma \ref{lemma:vot}]
Let us decompose $M:=M(\frac{q_1}{p_1},\frac{q_2}{p_2},\frac{q_3}{p_3})$ as in the proof of Lemma \ref{lemma:ut}, writing
\[
M = (\Sigma'\times S^1)\cup_{(\varphi_1\cup\varphi_2\cup\varphi_3)}(V_1'\cup V_2'\cup V_3'),
\]
where $-\partial(\Sigma'\times S^1)=T_1'+T_2'+T_3'$, and the dividing curves on each $T_i'$ have slope $\infty$.  For $i=1,2,3$, we may express the orientation-preserving diffeomorphism $\varphi_i\colon\partial V_i\to T_i$ via
\[
\varphi_i = \left(\begin{matrix}
p_i & -u_i\\ -q_i & v_i
\end{matrix}\right),
\]
for some $u_i,v_i$ satisfying $p_iv_i-q_iu_i=1$.  In the coordinates of $\partial V_i'$, the dividing curves thus have slope represented by
\[
\varphi_i^{-1}\left(\begin{matrix}0\\ 1\end{matrix}\right) = \left(\begin{matrix}
v_i & u_i\\ q_i & p_i
\end{matrix}\right)\left(\begin{matrix}0\\ 1\end{matrix}\right) = \left(\begin{matrix} u_i\\ p_i \end{matrix}\right).
\]
So $V_i'$ is a solid torus whose boundary has dividing curves of slope $p_i/u_i$, for $i=1,2,3$.  If $V_i'$ is virtually overtwisted, then lifting $\xi|_{V_i'}$ via the $p_i$-fold cover $\tilde{V}_i' \to V_i'$ produces an overtwisted contact structure on $\tilde{V}_i'$.  (See, for example, \cite[Exercise 6.45]{etnyre2004convex}.)  Now Lemma \ref{lemma:cover} allows us to construct a finite sheeted cover $p\colon\tilde{M}\to M$ such that $V_i'$ lifts to several copies of $\tilde{V}_i'$, for $i=1,2,3$.  Because $(M,\xi)$ has a horizontal link with both positive and negative stabilizations, at least one of $V_1',V_2'$, and $V_3'$ is virtually overtwisted, and thus a lift of this solid torus in $\tilde{M}$ is overtwisted.  We conclude that $(\tilde{M},p^*\xi)$ is overtwisted, and thus $(M,\xi)$ is virtually overtwisted.
\end{proof}

Lemma \ref{lemma:vot} applies to any contact structure which is neither thoroughly nor lightly mixed, and to which Lemma \ref{lemma:ut} does not apply.  Lemma \ref{lemma:vot} also applies to all lightly mixed contact structures and, if $e_0>0$, all but two thoroughly mixed contact structures.  If $e_0=0$ and $M(\frac{q_1}{p_1},\frac{q_2}{p_2},\frac{q_3}{p_3})$ does not have $q_i=p_i-1$ for $i=1,2,3$, then each horizontal link in Figure \ref{fig:seifert-filling} has more than one stabilization, and the classification of tight contact structures \cite[Theorem 2.7]{ghiggini2006classification} allows us to change the sign of one stabilization on each horizontal link to ensure that Lemma \ref{lemma:vot} applies to at least one of these links.  On the other hand, if $q_i=p_i-1$ for $i=1,2,3$, then each horizontal link has exactly one stabilization, so Lemma \ref{lemma:vot} does not apply.  Altogether, we see that if $e_0>0$ there are at most 8 universally tight contact structures on $M(\frac{q_1}{p_1},\frac{q_2}{p_2},\frac{q_3}{p_3})$, while if $e_0=0$, there are at most 7 universally tight contact structures.  If $e_0=0$ and $q_i\neq p_i-1$ for some $i=1,2,3$, then there are precisely 6 universally tight contact structures on $M(\frac{q_1}{p_1},\frac{q_2}{p_2},\frac{q_3}{p_3})$.

\subsubsection{The case $e_0\geq 0$}\label{subsubsec:positive-euler}
Theorem \ref{thm:lightly-mixed} is a straightforward consequence of Theorem 1.3 of \cite{menke2018jsj} and the definition of lightly mixed contact structures, so we prove this result first.\\

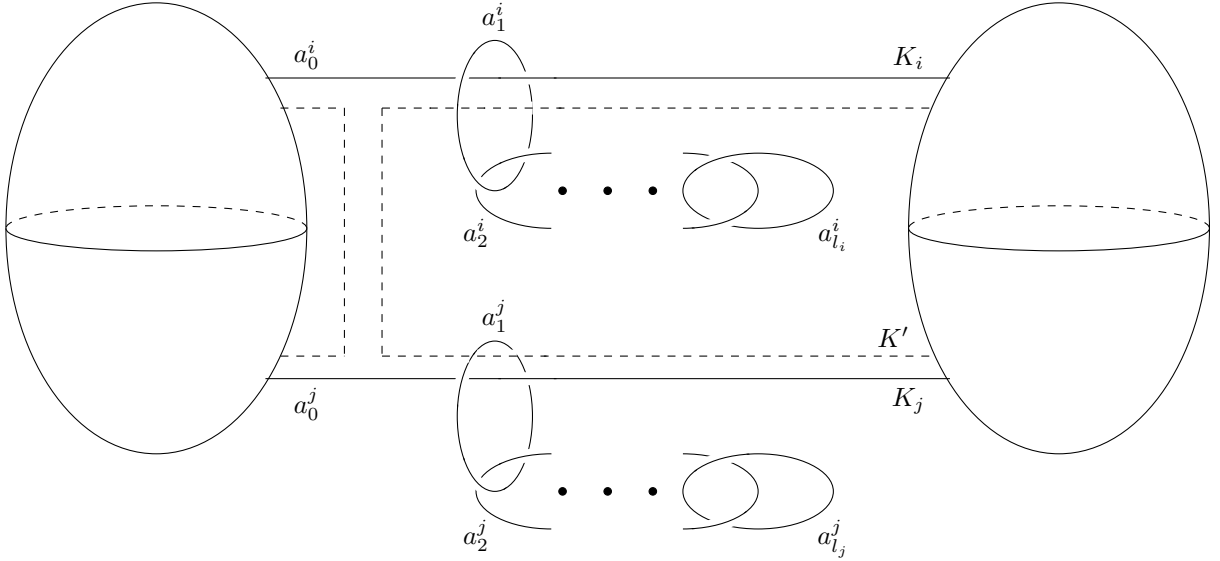
\begin{figure}
\centering
\begin{tikzpicture}
\draw (-6,0) ellipse (2 and 3);
\draw (-8,0) arc (180:360:2 and 0.3);
\draw[dashed] (-4,0) arc (0:180:2 and 0.3);
\draw (6,0) ellipse (2 and 3);
\draw (4,0) arc (180:360:2 and 0.3);
\draw[dashed] (8,0) arc (0:180:2 and 0.3);

\begin{scope}[yshift=-1cm]
\begin{knot}[
	clip width=3.5,
	ignore endpoint intersections=false,
	flip crossing/.list={1,5,8}]
\strand (-4.55,3) to (4.55,3);
\strand[dashed] (-3,2.6) to (4.35,2.6);
\strand[dashed] (-4.35,2.6) to (-3.5,2.6);
\strand (-1,2.5) arc (360:0:0.5 and 1);
\strand (-0.75,1) arc (270:90:1 and 0.5);
\strand (1,1) arc (-90:90:1 and 0.5);
\strand (3,1.5) arc (0:360:1 and 0.5);
\end{knot}
\end{scope}

\draw[dashed] (-3,1.6) -- (-3,-1.7);
\draw[dashed] (-3.5,1.6) -- (-3.5,-1.7);

\begin{scope}[yshift=-5cm]
\begin{knot}[
	clip width=3.5,
	ignore endpoint intersections=false,
	flip crossing/.list={1,5,8}]
\strand (-4.55,3) to (4.55,3);
\strand[dashed] (-3,3.3) to (4.35,3.3);
\strand[dashed] (-4.35,3.3) to (-3.5,3.3);
\strand (-1,2.5) arc (360:0:0.5 and 1);
\strand (-0.75,1) arc (270:90:1 and 0.5);
\strand (1,1) arc (-90:90:1 and 0.5);
\strand (3,1.5) arc (0:360:1 and 0.5);
\end{knot}
\end{scope}

\begin{scope}[yshift=5cm]
\filldraw[color=black, fill=black](0,-4.5) circle (0.05);
\filldraw[color=black, fill=black](-0.6,-4.5) circle (0.05);
\filldraw[color=black, fill=black](0.6,-4.5) circle (0.05);
\end{scope}
\begin{scope}[yshift=1cm]
\filldraw[color=black, fill=black](0,-4.5) circle (0.05);
\filldraw[color=black, fill=black](-0.6,-4.5) circle (0.05);
\filldraw[color=black, fill=black](0.6,-4.5) circle (0.05);
\end{scope}

\begin{scope}[yshift=-1cm]
\node[above] at (4,3) {$K_{n-1}$};
\node[above] at (-4,3) {$a_0^{n-1}$};
\node[above] at (-1.5,3.5) {$a_1^{n-1}$};
\node[below] at (-1.75,1.25) {$a_2^{n-1}$};
\node[below] at (3,1.25) {$a_{l_i}^{n-1}$};
\end{scope}

\begin{scope}[yshift=-5cm]
\node[above] at (3.8,3.3) {$K'$};
\node[below] at (4,3) {$K_n$};
\node[below] at (-4,3) {$a_0^n$};
\node[above] at (-1.5,3.5) {$a_1^n$};
\node[below] at (-1.75,1.25) {$a_2^n$};
\node[below] at (3,1.25) {$a_{l_j}^n$};
\end{scope}
\end{tikzpicture}
\caption{In the handlebody diagram for $(M_{n-2},\zeta_{n-2})$, both $K_{n-1}$ and $K_n$ pass over the 1-handle.  To realize $(M_{n-2},\zeta_{n-2})$ as a lens space, we slide $K_n$ over $K_{n-1}$ to produce $K'$, which has framing $a_0^{n-1}+a_0^n$, and then cancel $K_{n-1}$ with the 1-handle.}
\label{fig:light-diagrams}
\end{figure}

Suppose that $M=M(\frac{q_1}{p_1},\cdots,\frac{q_n}{p_n})$ is a Seifert fibered space, for some $n\geq 3$ and coprime positive integers $q_i,p_i$.  If $\xi$ is a lightly mixed tight contact structure on $M$, then we may realize $(M,\xi)$ as the boundary of a Stein handlebody as in Figure \ref{fig:seifert-filling}, with $n-2$ of the horizontal knots $K_1,\ldots,K_n$ having been stabilized both positively and negatively.  Without loss of generality, we may assume that each of $K_1,\ldots,K_{n-2}$ has been stabilized both positively and negatively.  Notice that $(M,\xi)$ is obtained from the contact manifold
\[
(L(q_1,-p_1),\xi_1)\#(M_1=M(\frac{q_2}{p_2},\cdots,\frac{q_n}{p_n}),\zeta_1)
\]
by Legendrian surgery along $K_1$.  Here we are using the fact that if $-p_i/q_i=[a_0^i,a_1^i,\ldots,a_{l_i}^i]$, then
\[
\frac{q_i}{p_i+a_0^iq_i} = [a_1^i,a_2^i,\ldots,a_{l_i}^i].
\]
The contact structures $\xi_1$ and $\zeta_1$ are the obvious ones, obtained from the Stein handlebody diagram in Figure \ref{fig:seifert-filling} by erasing $K_1$.  According to \cite[Theorem 1.3]{menke2018jsj}, every exact symplectic filling of $(M,\xi)$ is obtained from an exact filling of $(L(q_1,-p_1),\xi_1)\#(M_1,\zeta_1)$ by attaching a Weinstein 2-handle along $K_1$.  In the language of round handles, we have Legendrian knots $L_1^-\subset(L(q_1,-p_1),\xi_1)$ and $L_1^+\subset(M_1,\zeta_1)$ along which we may attach a round symplectic 1-handle to a filling of $(L(q_1,-p_1),\xi_1)\sqcup (M_1,\zeta_1)$.\\

We have presented $(M_1,\zeta_1)$ as the boundary of the Stein handlebody depicted in Figure \ref{fig:seifert-filling}, with the chain of knots with framings $a_0^1,a_1^1,\ldots,a_{l_1}^1$ deleted.  By its construction, $(M_1,\zeta_1)$ is lightly mixed, with $K_2,\ldots,K_{n-2}$ having been stabilized both positively and negatively.  We may thus repeat the above procedure to decompose a filling of $(M,\xi)$ into a filling of
\[
(L(q_1,-p_1),\xi_1) \sqcup (L(q_2,-p_2),\xi_2) \sqcup (M_2,\zeta_2).
\]
We continue this procedure until we are left with
\[
(L(q_1,-p_1),\xi_1) \sqcup\cdots\sqcup (L(q_{n-2},-p_{n-2}),\xi_{n-2}) \sqcup (M_{n-2},\zeta_{n-2}),
\]
where $(M_{n-2},\zeta_{n-2})$ is as in Figure \ref{fig:light-diagrams}: there are two horizontal knots, neither of which has stabilizations of both signs.  This is a Seifert fibered space over $S^2$, and thus a lens space.  Indeed, after sliding $K_2$ over $K_1$, we may cancel the 2-handle attached along $K_1$ with the 1-handle.  We are left with a chain of unknots whose framings are given by
\[
a_{l_{n-2}}^{n-2},\ldots,a_1^{n-2},a_0^{n-2}+a_0^{n},a_1^{n},\ldots,a_{l_{n}}^{n},
\]
and thus $M_{n-2}\cong L(p',q')$, where
\[
-\frac{p'}{q'} = [a_{l_{n-2}}^{n-2},\ldots,a_1^{n-2},a_0^{n-2}+a_0^{n},a_1^{n},\ldots,a_{l_{n}}^{n}].
\]
See Figure \ref{fig:light-diagrams}.  This proves Theorem \ref{thm:lightly-mixed}.\\

\begin{figure}
\centering
\begin{tikzpicture}
\begin{scope}[scale=1.25]
\draw (0,0) ellipse (5 and 3);
\draw (0,0) ellipse (4.5 and 2.5);
\draw (0,0) ellipse (4 and 2);
\node[above] at (0,-2.5) {$\infty$};
\node[below] at (0,-3) {$-1$};
\node[above] at (2,-2.3) {$T_1'$};
\node[below] at (2,-2.35) {$T_1$};
\node[above] at (1,-2.5) {$-$};
\node[below] at (1,-2.5) {$+$};
\node[above] at (0,-2) {$e_0+1$};
\end{scope}

\begin{scope}[xshift=-2cm]
\draw (0,0) circle (1.15);
\node[below] at (0,-1.15) {$T_2$};
\node[above] at (0,1.15) {$-1$};
\draw[dashed] (1.15,0) -- (2.85,0);
\node[above] at (2,0) {$A$};
\end{scope}

\begin{scope}[xshift=2cm]
\draw (0,0) circle (1.15);
\node[below] at (0,-1.15) {$T_3$};
\node[above] at (0,1.15) {$-e_0-1$};
\end{scope}
\end{tikzpicture}
\caption{If $M(\frac{q_1}{p_1},\frac{q_2}{p_2},\frac{q_3}{p_3})$ is thoroughly mixed, then $T_1'$ is a mixed torus.}
\label{fig:base-case}
\end{figure}
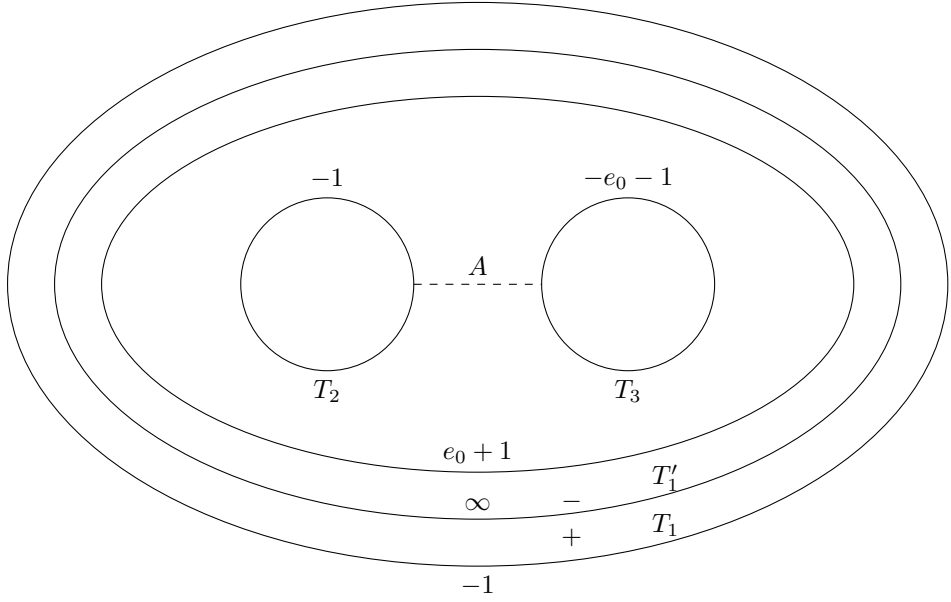

There are some thoroughly mixed contact structures for which $n-1$ of the knots $K_1,\ldots,K_n$ have been stabilized both positively and negatively.  For these, the proof of Theorem \ref{thm:thoroughly-mixed} proceeds as did the proof of Theorem \ref{thm:lightly-mixed}.  But the condition of being thoroughly mixed is more relaxed than this, and we will in fact use Theorem \ref{thm:jsj} directly in our proof, rather than Theorem \ref{thm:menke-knot}.\\

Our argument proceeds by induction on the number $n$ of singular fibers.  Consider first the case where $n=3$.  Then, as depicted in Figure \ref{fig:base-case}, we have a mixed torus $T_1'$ with vertical dividing curves, sandwiched between basic slices whose other tori have dividing curves of slope $-1$ and $e_0+1$, respectively.  Theorem \ref{thm:jsj} would have us split $M=M(\frac{q_1}{p_1},\frac{q_2}{p_2},\frac{q_3}{p_3})$ open along this torus and attach a solid torus to each of the resulting pieces.  Because the dividing curves of $T_1'$ are vertical, the meridian $\mu(S)$ of the solid torus $S$ must have slope $m\in\mathbb{Z}$.  In fact, \cite[Theorem 1.1]{menke2018jsj} tells us that we must have $0\leq m\leq e_0$, since the slopes adjacent to our mixed torus are $-1$ and $e_0+1$.\\

Now one of the two closed contact manifolds is $L_1=S\cup_{T_1'}V_1'$, a gluing of two solid tori.  The meridian of $V_1'$ has slope $q_1/p_1$, with $0<q_1<p_1$.  We may consider a family of tori $T^2\times[0,1]$ in $L_1$ such that $T^2\times\{0\}\subset V_1'$ has dividing curves with slope $q_1/p_1$, $T^2\times\{1/2\}=T_1'$, and $T^2\times\{1\}\subset S$ has dividing curves of slope $m$.  As the dividing curves rotate counterclockwise from $q_1/p_1$ to $\infty$ to $m$, they must not rotate through an angle in excess of $\pi$, since $L_1$ is fillable and thus tight.  This restriction is only satisfied when $m=0$.  So we conclude that $m=0$ and $L_1=L(q_1,-p_1)$.  See Figure \ref{fig:thoroughly-mixed-slopes}.\\

The other closed contact manifold produced by our application of Theorem \ref{thm:jsj} is obtained from $M$ by deleting the neighborhood $V_1'$ of a singular fiber and replacing it with the solid torus $S$, glued in with horizontal meridians.  The result is $M(\frac{0}{1},\frac{q_2}{p_2},\frac{q_3}{p_3})=M(\frac{q_2}{p_2},\frac{q_3}{p_3})$.  We may now apply Theorem \ref{thm:jsj} to this Seifert fibered space (which is in fact a lens space) along the mixed torus $T_2'$.  Arguing as before, we find that the solid torus which is glued in at this stage must have horizontal dividing curves.  The result of this decomposition is a disjoint union of fillings of some contact structures on
\[
L(q_2,-p_2)
\quad\text{and}\quad
M\left(\frac{0}{1},\frac{q_3}{p_3}\right) = L(q_3,-p_3).
\]
Altogether, we have decomposed a filling of $M$ with a thoroughly mixed tight contact structure into a disjoint union of fillings of $L(q_i,-p_i)$, $i=1,2,3$, with some tight contact structures, and Theorem \ref{thm:jsj} provides the Legendrian knots described in Theorem \ref{thm:thoroughly-mixed}.  This establishes the base case of our induction.\\

\begin{figure}
\centering
\begin{tikzpicture}[scale=3]
\draw[thin] (0,-1) -- (0,1);
\draw[thin] (-1,0) -- (1,0);

\draw[thick] (-1,0) -- (1,0);
\node[above] at (-1,0) {$\mu_S$};

\draw[thick] (0,-1) -- (0,1);
\node[right] at (0,.8) {$\Gamma_{T_1'}$};

\draw[thick, dashed] (-0.9191,-0.3939) -- (0.9191,0.3939);
\node[above] at (0.9191,0.3939) {$\mu_{V_1'}$};

\draw [thick,domain=23.2:180] plot ({0.6*cos(\x)}, {0.6*sin(\x)});
\end{tikzpicture}
\caption{Because $0<q_1/p_1<1$, we must have $m=0$.}
\label{fig:thoroughly-mixed-slopes}
\end{figure}
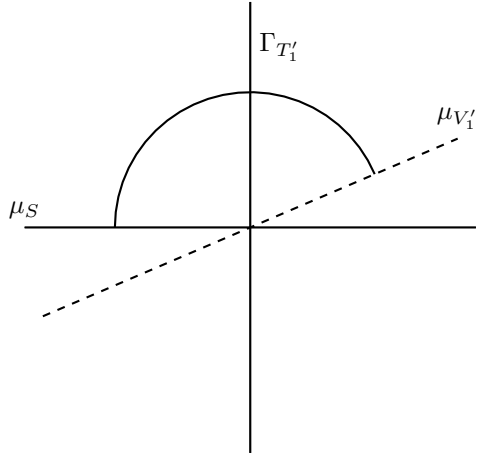

For the inductive step, the analysis above proceeds as before.  Splitting a filling of $M=M(\frac{q_1}{p_1},\ldots,\frac{q_n}{p_n})$ open along the mixed torus $T_1'$ produces symplectic fillings of
\[
L(q_1,-p_1)
\quad\text{and}\quad
M\left(\frac{0}{1},\frac{q_2}{p_2},\ldots,\frac{q_n}{p_n}\right).
\]
The latter is a thoroughly mixed Seifert fibered space with $n-1$ singular fibers, for which we assume that Theorem \ref{thm:thoroughly-mixed} holds, and thus the decomposition may continue until we have a disjoint union of filling of $L(q_i,-p_i)$, for $i=1,\ldots,n$.  This proves Theorem \ref{thm:thoroughly-mixed}.\\

\begin{figure}
\centering
\begin{tikzpicture}[xscale=0.5,yscale=0.6]
\begin{scope}
\draw (-6,0) ellipse (2 and 4);
\draw (-8,0) arc (180:360:2 and 0.3);
\draw[dashed] (-4,0) arc (0:180:2 and 0.3);
\draw (6,0) ellipse (2 and 4);
\draw (4,0) arc (180:360:2 and 0.3);
\draw[dashed] (8,0) arc (0:180:2 and 0.3);

\begin{knot}[
	clip width=5,
	clip radius=2pt,
	ignore endpoint intersections=false,
 	flip crossing/.list={2,4,6,8,10,12}]
\strand (-4.72,3) to[out=0,in=180]
	    (3,2.5) to[out=180,in=0]
	    (2,3) to[out=0,in=180]
	    (4.72,3);
\strand (-3.5,2.375) to[out=0,in=180]
		(-2.5,3) to[out=0,in=180]
		(-1.5,2.375) to[out=180,in=0]
		(-2,2) to[out=0,in=180]
		(-1.5,1.625) to[out=180,in=0]
		(-2.5,1.25) to[out=180,in=0]
		(-3.5,1.625) to[out=0,in=180]
		(-3,2) to[out=180,in=0]
		(-3.5,2.375);
\strand (-2.25,1.625) to[out=0,in=180]
		(-0.75,2.125) to[out=0,in=180]
		(0,2) to[out=180,in=0]
		(-0.5,1.625) to[out=0,in=180]
		(0,1.25) to[out=180,in=0]
		(-0.75,1.125) to[out=180,in=0]
		(-2.25,1.625);
\strand (-0.75,2) to[out=0,in=180]
		(0.75,2.5) to[out=0,in=180]
		(1.5,2.375) to[out=180,in=0]
		(1,2) to[out=0,in=180]
		(1.5,1.625) to[out=180,in=0]
		(0.75,1.5) to[out=180,in=0]
		(-0.75,2);
	    
\strand (-4,0) to[out=0,in=180]
	    (3,-0.5) to[out=180,in=0]
	    (2,0) to[out=0,in=180]
	    (4,0);
	
\strand (-4.72,-3.125) to[out=0,in=180]
	    (-2,-3.125) to[out=180,in=0]
	    (-3,-3.625) to[out=0,in=180]
	    (4.72,-3.125);
\strand (-0.75,-3.625) to[out=0,in=180]
		(0,-3.375) to[out=0,in=180]
		(1.5,-4.25) to[out=180,in=0]
		(0,-5.125) to[out=180,in=0]
		(-0.75,-4.875) to[out=0,in=180]
		(-0.25,-4.5625) to[out=180,in=0]
		(-0.75,-4.25) to[out=0,in=180]
		(-0.25,-3.9375) to[out=180,in=0]
		(-0.75,-3.625);
\strand (0.875,-4.25) to[out=0,in=180]
		(1.625,-4.125) to[out=0,in=180]
		(3.25,-4.625) to[out=180,in=0]
		(1.625,-5.125) to[out=180,in=0]
		(0.875,-5) to[out=0,in=180]
		(1.375,-4.625) to[out=180,in=0]
		(0.875,-4.25);
\strand (2.125,-4.625) to[out=0,in=180]
		(3.75,-4.125) to[out=0,in=180]
		(4.5,-4.25) to[out=180,in=0]
		(4,-4.625) to[out=0,in=180]
		(4.5,-5) to[out=180,in=0]
		(3.75,-5.125) to[out=180,in=0]
		(2.125,-4.625);
\end{knot}

\node[above] at (4,3) {$K_1$};
\node[above] at (3.25,0) {$K_2$};
\begin{scope}[yshift=-6cm]
\node[above] at (3.75,3) {$K_3$};
\end{scope}
\end{scope}

\draw[->] (2,-5.5) -- (6,-8.5);
\draw[->] (-2,-5.5) -- (-6,-8.5);

\begin{scope}[xshift=-9cm,yshift=-12cm]
\draw (-6,0) ellipse (2 and 4);
\draw (-8,0) arc (180:360:2 and 0.3);
\draw[dashed] (-4,0) arc (0:180:2 and 0.3);
\draw (6,0) ellipse (2 and 4);
\draw (4,0) arc (180:360:2 and 0.3);
\draw[dashed] (8,0) arc (0:180:2 and 0.3);

\begin{knot}[
	clip width=5,
	clip radius=2pt,
	ignore endpoint intersections=false,
 	flip crossing/.list={2,4,6,8,10,12}]
\strand (-4.72,3) to[out=0,in=180]
	    (3,2.5) to[out=180,in=0]
	    (2,3) to[out=0,in=180]
	    (4.72,3);
\strand[dashed] (-3.5,2.375) to[out=0,in=180]
		(-2.5,3) to[out=0,in=180]
		(-1.5,2.375) to[out=180,in=0]
		(-2,2) to[out=0,in=180]
		(-1.5,1.625) to[out=180,in=0]
		(-2.5,1.25) to[out=180,in=0]
		(-3.5,1.625) to[out=0,in=180]
		(-3,2) to[out=180,in=0]
		(-3.5,2.375);
\strand (-2.25,1.625) to[out=0,in=180]
		(-0.75,2.125) to[out=0,in=180]
		(0,2) to[out=180,in=0]
		(-0.5,1.625) to[out=0,in=180]
		(0,1.25) to[out=180,in=0]
		(-0.75,1.125) to[out=180,in=0]
		(-2.25,1.625);
\strand (-0.75,2) to[out=0,in=180]
		(0.75,2.5) to[out=0,in=180]
		(1.5,2.375) to[out=180,in=0]
		(1,2) to[out=0,in=180]
		(1.5,1.625) to[out=180,in=0]
		(0.75,1.5) to[out=180,in=0]
		(-0.75,2);
	    
\strand (-4,0) to[out=0,in=180]
	    (3,-0.5) to[out=180,in=0]
	    (2,0) to[out=0,in=180]
	    (4,0);
	
\strand (-4.72,-3.125) to[out=0,in=180]
	    (-2,-3.125) to[out=180,in=0]
	    (-3,-3.625) to[out=0,in=180]
	    (4.72,-3.125);
\strand (-0.75,-3.625) to[out=0,in=180]
		(0,-3.375) to[out=0,in=180]
		(1.5,-4.25) to[out=180,in=0]
		(0,-5.125) to[out=180,in=0]
		(-0.75,-4.875) to[out=0,in=180]
		(-0.25,-4.5625) to[out=180,in=0]
		(-0.75,-4.25) to[out=0,in=180]
		(-0.25,-3.9375) to[out=180,in=0]
		(-0.75,-3.625);
\strand[dashed] (0.875,-4.25) to[out=0,in=180]
		(1.625,-4.125) to[out=0,in=180]
		(3.25,-4.625) to[out=180,in=0]
		(1.625,-5.125) to[out=180,in=0]
		(0.875,-5) to[out=0,in=180]
		(1.375,-4.625) to[out=180,in=0]
		(0.875,-4.25);
\strand (2.125,-4.625) to[out=0,in=180]
		(3.75,-4.125) to[out=0,in=180]
		(4.5,-4.25) to[out=180,in=0]
		(4,-4.625) to[out=0,in=180]
		(4.5,-5) to[out=180,in=0]
		(3.75,-5.125) to[out=180,in=0]
		(2.125,-4.625);
\end{knot}

\node[above] at (4,3) {$K_1$};
\node[above] at (3.25,0) {$K_2$};
\begin{scope}[yshift=-6cm]
\node[above] at (3.75,3) {$K_3$};
\end{scope}
\end{scope}

\begin{scope}[xshift=9cm,yshift=-12cm]
\draw (-6,0) ellipse (2 and 4);
\draw (-8,0) arc (180:360:2 and 0.3);
\draw[dashed] (-4,0) arc (0:180:2 and 0.3);
\draw (6,0) ellipse (2 and 4);
\draw (4,0) arc (180:360:2 and 0.3);
\draw[dashed] (8,0) arc (0:180:2 and 0.3);

\begin{knot}[
	clip width=5,
	clip radius=2pt,
	ignore endpoint intersections=false,
 	flip crossing/.list={2,4,6,8,10,12}]
\strand (-4.72,3) to[out=0,in=180]
	    (3,2.5) to[out=180,in=0]
	    (2,3) to[out=0,in=180]
	    (4.72,3);
\strand[dashed] (-3.5,2.375) to[out=0,in=180]
		(-2.5,3) to[out=0,in=180]
		(-1.5,2.375) to[out=180,in=0]
		(-2,2) to[out=0,in=180]
		(-1.5,1.625) to[out=180,in=0]
		(-2.5,1.25) to[out=180,in=0]
		(-3.5,1.625) to[out=0,in=180]
		(-3,2) to[out=180,in=0]
		(-3.5,2.375);
\strand (-2.25,1.625) to[out=0,in=180]
		(-0.75,2.125) to[out=0,in=180]
		(0,2) to[out=180,in=0]
		(-0.5,1.625) to[out=0,in=180]
		(0,1.25) to[out=180,in=0]
		(-0.75,1.125) to[out=180,in=0]
		(-2.25,1.625);
\strand (-0.75,2) to[out=0,in=180]
		(0.75,2.5) to[out=0,in=180]
		(1.5,2.375) to[out=180,in=0]
		(1,2) to[out=0,in=180]
		(1.5,1.625) to[out=180,in=0]
		(0.75,1.5) to[out=180,in=0]
		(-0.75,2);
	    
\strand (-4,0) to[out=0,in=180]
	    (3,-0.5) to[out=180,in=0]
	    (2,0) to[out=0,in=180]
	    (4,0);
	
\strand (-4.72,-3.125) to[out=0,in=180]
	    (-2,-3.125) to[out=180,in=0]
	    (-3,-3.625) to[out=0,in=180]
	    (4.72,-3.125);
\strand (-0.75,-3.625) to[out=0,in=180]
		(0,-3.375) to[out=0,in=180]
		(1.5,-4.25) to[out=180,in=0]
		(0,-5.125) to[out=180,in=0]
		(-0.75,-4.875) to[out=0,in=180]
		(-0.25,-4.5625) to[out=180,in=0]
		(-0.75,-4.25) to[out=0,in=180]
		(-0.25,-3.9375) to[out=180,in=0]
		(-0.75,-3.625);
\strand (0.875,-4.25) to[out=0,in=180]
		(1.625,-4.125) to[out=0,in=180]
		(3.25,-4.625) to[out=180,in=0]
		(1.625,-5.125) to[out=180,in=0]
		(0.875,-5) to[out=0,in=180]
		(1.375,-4.625) to[out=180,in=0]
		(0.875,-4.25);
\strand[dashed] (2.125,-4.625) to[out=0,in=180]
		(3.75,-4.125) to[out=0,in=180]
		(4.5,-4.25) to[out=180,in=0]
		(4,-4.625) to[out=0,in=180]
		(4.5,-5) to[out=180,in=0]
		(3.75,-5.125) to[out=180,in=0]
		(2.125,-4.625);
\end{knot}

\node[above] at (4,3) {$K_1$};
\node[above] at (3.25,0) {$K_2$};
\begin{scope}[yshift=-6cm]
\node[above] at (3.75,3) {$K_3$};
\end{scope}
\end{scope}
\end{tikzpicture}
\caption{Decomposing a filling of a contact structure which is neither thoroughly nor lightly mixed.  The result is a filling of a disjoint union of a universally tight small Seifert fibered space and some universally tight lens spaces.}
\label{fig:ssf-example}
\end{figure}
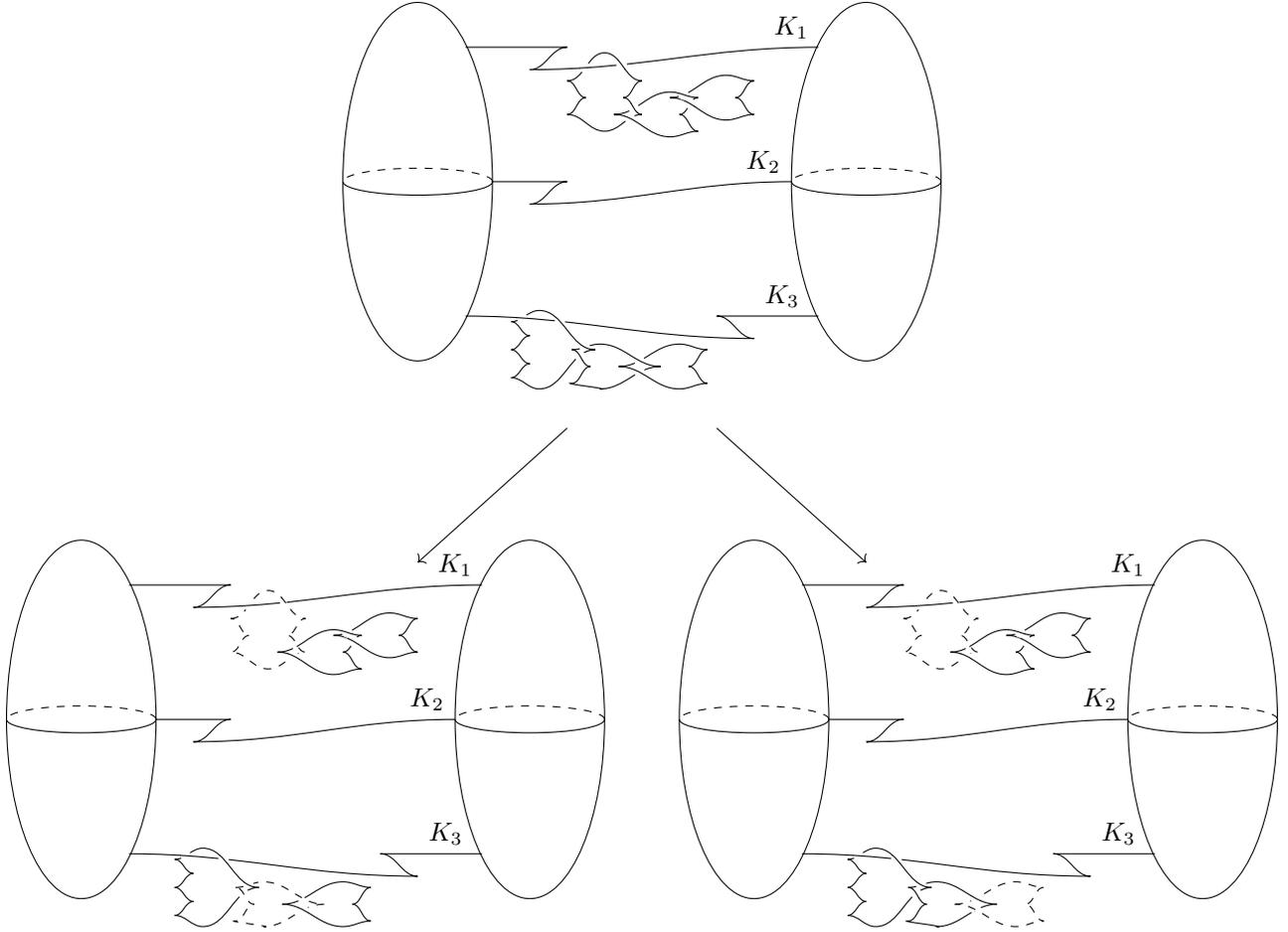

At last, we address fillings of those contact structures on small Seifert fibered spaces which have at least one horizontal link with both positive and negative stabilizations --- these are the structures considered in Lemma \ref{lemma:vot}.  In this case, each of $K_1,K_2$, and $K_3'$ has stabilizations of a single sign, but these signs do not all agree.  Here, as above, $K_3'$ is the nearest unknot adjacent to $K_3$ which has been stabilized, meaning that $K_3'=K_3$ if $e_0=0$.  For $i=1,2,3$, we let $\widehat{K}_i$ denote the nearest knot adjacent to $K_i$ with a stabilization of a different sign from those on $K_i$ (or $K_3'$).  By our assumption, at least one $\widehat{K}_i$ exists.  Let us write
\begin{equation}\label{eq:ssf}
M = M\left(\frac{q_1}{p_1},\frac{q_2}{p_2},\frac{q_3}{p_3}\right)\cong(\Sigma\times S^1) \cup \left(\bigcup_{i=1}^3\bigcup_{j=1}^{l_i}L_{i,j}\right)\cup\left(\bigcup_{i=1}^3 V_i\right),
\end{equation}
where each $V_i$ is a solid torus, $-\partial(\Sigma\times S^1)=T_1+T_2+T_3$, and each $L_{i,j}\cong T^2\times I$ is a continued fraction block corresponding to a knot in the surgery diagram for $(M,\xi)$.  Specifically, let $L_{i,j_i}$ be the continued fraction block corresponding to $\widehat{K}_i$, for $i=1,2,3$.  Then the boundary torus $\widehat{T}_i$ between $L_{i,j_i}$ and $L_{i,j_i-1}$ is a mixed torus, and each continued fraction block preceding $L_{i,j_i}$ has basic slices of a single sign, matching the stabilizations of $K_i$.\\

Notice that simultaneously splitting $(M,\xi)$ along the mixed tori $\widehat{T}_1,\widehat{T}_2,$ and $\widehat{T}_3$ yields a disjoint union of a universally tight small Seifert fibered space and three lens spaces, independent of the slopes which are used to perform this splitting.  It follows that by applying the JSJ decomposition to an exact symplectic filling of $(M,\xi)$, we may obtain this filling from a disjoint union of an exact filling of a universally tight small Seifert fibered space with exact fillings of three lens spaces.  By applying Theorem \ref{thm:lens-space-fillings} to the three lens space fillings, we prove Theorem \ref{thm:not-mixed}.\\

Observe that for contact structures which are thoroughly or lightly mixed, the conclusion of Theorem \ref{thm:not-mixed} follows from Theorems \ref{thm:lens-space-fillings}, \ref{thm:thoroughly-mixed}, and \ref{thm:lightly-mixed}.  So, with the small number of exceptions pointed out at the conclusion of Section~\ref{subsubsec:mixed-structures}, we have reduced the problem of classifying exact symplectic fillings for small Seifert fibered spaces to the same problem for universally tight lens spaces and for universally tight small Seifert fibered spaces.  See Figure \ref{fig:ssf-example}.

\subsubsection{The case $e_0\leq -3$}\label{sec:negative-euler}
Throughout this section, we will consider a Seifert fibered space $M=M(-\frac{q_1}{p_1},\cdots,-\frac{q_n}{p_n})$ with $p_i\geq 2$, $q_i\geq 1$, and $(p_i,q_i)=1$ for $i=1,\ldots,n$.  Every tight contact structure $\xi$ on $M$ that we consider will be constructed by putting the knots of Figure \ref{fig:negative-euler} into Legendrian position and stabilizing appropriately.\\

In case $\xi$ is centrally mixed --- meaning that the central knot of Figure \ref{fig:negative-euler} is stabilized both positively and negatively --- Proposition \ref{prop:centrally-mixed} tells us that the exact symplectic fillings of $(M,\xi)$ are obtained by attaching a sequence of round symplectic 1-handles to a disjoint union of fillings of lens spaces.  If $\xi$ is not centrally mixed, then the central knot has stabilizations which are either all positive or all negative; notice that, since $e_0\leq -n\leq -3$, the central knot must have at least one stabilization.  The following proposition considers the case in which the stabilizations of the central knot are all of a single sign.

\begin{proposition}\label{prop:legs-have-single-sign}
Let $(M,\xi)$ be as above, with the central knot in Figure \ref{fig:negative-euler} having stabilizations which are all of a single sign.  Then every exact symplectic filling of $(M,\xi)$ may be obtained by attaching round symplectic 1-handles to a disjoint union of fillings of lens spaces and a Seifert fibered space $(M',\xi')$.  Moreover, $(M',\xi')$ admits a Legendrian surgery diagram as in Figure \ref{fig:negative-euler}, with each leg of the diagram having stabilizations of a single sign.
\end{proposition}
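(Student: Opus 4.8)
The plan is to run an induction that peels lens-space summands off of $(M,\xi)$ until every leg of the surgery diagram in Figure~\ref{fig:negative-euler} carries stabilizations of only one sign. Since the central knot of Figure~\ref{fig:negative-euler} is assumed to have all of its stabilizations of a single sign, we may assume that sign is positive and induct on the total number $\sigma$ of stabilizations carried by the leg knots $K_j^i$ ($1\le i\le n$, $1\le j\le \ell_i$). The base case is the situation in which the stabilizations within each leg already share a single sign: then every knot of Figure~\ref{fig:negative-euler} is stabilized with a single sign, so $(M,\xi)$ is itself a canonical Seifert fibered space and we take $(M',\xi')=(M,\xi)$.

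For the inductive step I would locate a mixed torus inside a leg whose stabilizations are not all of one sign, using exactly the bookkeeping of Section~\ref{sec:lens-space-proofs}. If some single leg knot $K_j^i$ has been stabilized both positively and negatively, then writing $K_j^i=S_+(S_-(L'))$ and invoking Theorem~\ref{thm:menke-knot} together with Eliashberg's Theorem~\ref{thm:connected-sum} recovers every strong filling of $(M,\xi)$ by a round symplectic 1-handle attachment to the disjoint union of a filling of the lens space carried by the tail $K_{j+1}^i,\dots,K_{\ell_i}^i$ of that leg with a filling of the Seifert fibered space carried by the diagram in which leg $i$ has been truncated to $K_1^i,\dots,K_{j-1}^i$ and the central knot is untouched. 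Otherwise, reading along the offending leg one selects adjacent stabilized knots $K_+$ and $K_-$ of opposite single sign with only unstabilized knots between them, exactly as in Section~\ref{sec:lens-space-proofs}; the convex torus lying between their continued-fraction blocks is a mixed torus with vertical dividing curves, and Theorem~\ref{thm:jsj} applies. For each admissible splitting slope the continued-fraction computation of Section~\ref{sec:lens-space-proofs}, carried out on the linear chain forming this leg, shows that the manifold produced by the JSJ decomposition is the disjoint union of a lens space (a sub-chain of the leg) with a Seifert fibered space again of the shape in Figure~\ref{fig:negative-euler}, obtained by deleting a single knot from the leg and thereby shortening it — or, in the extreme slope, detaching it and dropping the number of singular fibers by one. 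In every case $(W,\omega)$ is recovered from a filling of this disjoint union by a round symplectic 1-handle.

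In all of these cases the Seifert fibered summand still contains the central knot of Figure~\ref{fig:negative-euler} with all of its positive stabilizations, while its total number of leg stabilizations has strictly decreased; so the inductive hypothesis applies to it, yielding the remaining lens spaces and the canonical piece $(M',\xi')$. (If a Seifert summand ever has at most two singular fibers it is itself a lens space — with its single-sign central knot it is a standard, hence canonical, $L(|e_0|,1)$-type structure — so that branch of the recursion simply contributes another lens space.) I expect the only real obstacle to be the identification of the split manifold for each admissible JSJ slope: one must check, by the same continued-fraction arithmetic used for lens spaces in Section~\ref{sec:lens-space-proofs} but now applied to a single leg, that precisely one of the two resulting pieces is a chain of unknots (hence a lens space) while the other retains the central knot with its framing — and hence its monochromaticity — intact, so that the Figure~\ref{fig:negative-euler} description remains well posed throughout. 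Granting this, the induction closes and the proposition follows; if desired, Theorem~\ref{thm:lens-space-fillings} then decomposes each lens-space summand further into fillings of universally tight lens spaces.
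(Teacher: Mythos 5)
Your proposal is correct and follows essentially the same route as the paper: when a leg knot carries both signs you invoke Theorem \ref{thm:menke-knot} (with Theorem \ref{thm:connected-sum}) to amputate the tail of that leg, and otherwise you locate a mixed torus between adjacent opposite-sign continued fraction blocks within the leg and apply Theorem \ref{thm:jsj}, with each admissible splitting slope deleting one knot of the leg and producing a lens space disjoint from a Seifert fibered space with that leg shortened. The only difference is organizational --- you induct on the total number of leg stabilizations where the paper simply processes the legs one at a time --- and your flagged ``obstacle'' (identifying the two split pieces via the continued-fraction arithmetic of Section \ref{sec:lens-space-proofs}) is exactly the computation the paper carries out using its explicit decomposition of $M$ into $\Sigma\times S^1$ and the solid tori $V_i$.
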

\begin{proof}
If the Legendrian surgery diagram for $(M,\xi)$ is such that no leg has both positive and negative stabilizations, then we have nothing to do --- $(M',\xi')$ is simply $(M,\xi)$.  Otherwise, we lose no generality by assuming that the first leg of Figure \ref{fig:negative-euler} has both positive and negative stabilizations.  We will reduce to a case where the first leg does not have both positive and negative stabilizations; by applying this argument to each leg of the diagram, we obtain the desired result.\\

Note that if the first leg of our diagram contains a knot which is stabilized both positively and negatively, then we may apply Theorem \ref{thm:menke-knot} to this knot, amputating from the diagram this knot and all those below it in the leg.  Thus we assume that each knot in the first leg of our diagram has stabilizations of a single sign.  We then have knots $K^1_i$ and $K^1_j$, $1\leq i < j \leq \ell_1$, which have stabilizations of opposite signs, and are such that any knot $K^1_k$, with $i < k < j$, has no stabilizations.  Let us assume that $i$ and $j$ are minimal among such indices.  (Here we are using the notation established in Figure \ref{fig:negative-euler}.)  We will identify a mixed torus in $(M,\xi)$ associated to this mismatch of signs.\\

To this end, we decompose $(M,\xi)$ as
\[
M \cong (\Sigma\times S^1) \cup_{(\varphi_1\cup\cdots\cup\varphi_n)} (V_1\cup\cdots\cup V_n),
\]
where
\begin{itemize}
	\item $\Sigma$ is a planar surface such that $-\partial(\Sigma\times S^1)=T_1+\cdots+T_n$;
	\item each $T_i$ is a minimal convex torus with dividing curves of slope $\lfloor\frac{q_i}{p_i}\rfloor$;
	\item each $V_i$ is a solid torus, and $\partial V_i$ has dividing curves of slope $-\dfrac{q_i-\lfloor\frac{q_i}{p_i}\rfloor p_i}{q'_i-\lfloor\frac{q_i}{p_i}\rfloor p'_i}$, where $p_i\geq p'_i>0$, $q_i\geq q'_i>0$, and $p_iq'_i-q_ip'_i=1$;
	\item the gluing maps $\varphi_i\colon\partial V_i\to T_i$ are defined by
	\[
	\varphi_i = \left(\begin{matrix}
	p_i & p'_i\\ q_i & q'_i
	\end{matrix}\right).
	\]
\end{itemize}
The solid torus $(V_1,\xi|_{V_1})$ may be further decomposed by peeling off basic slices until we are left with a solid torus whose boundary has dividing curves of slope $-1$.  In this decomposition, we have a continued fraction block of basic slices for each knot $K^1_1,\ldots,K^1_{\ell_1}$ which has been stabilized.  In particular, the knots $K^1_i$ and $K^1_j$ correspond to adjacent continued fraction blocks.  By assumption, these continued fraction blocks are universally tight and of opposite sign, and thus their common boundary $T$ is a mixed torus.  We may normalize the slope of the dividing curves on $T$ to be $\infty$, and on the opposite boundary of the negative basic slice to be $-1$.  As in previous iterations of this argument, the slope $s$ on the opposite boundary of the positive basic slice will depend on the number of unstabilized knots which exist between $K^1_i$ and $K^1_j$.  In particular, $s$ will be two more than the number of these knots.\\

We now apply Theorem \ref{thm:jsj} to a filling of $(M,\xi)$ along $T$.  There are $s$ contact manifolds which might be produced by this decomposition, and these correspond diagrammatically to deleting either $K^1_i$, $K^1_j$, or one of the intermediate knots from Figure \ref{fig:negative-euler}.  In any case, the contact manifold is a disjoint union of a lens space (whose surgery diagram is given by the link below the deleted knot) and a Seifert fibered space $(M',\xi')$.  The stabilizations in the first leg of the Legendrian surgery diagram for $(M',\xi')$ --- of which there may be none --- all have the same sign as those of $K^1_i$.  In particular, the first leg does not have both positive and negative stabilizations.  By applying this theorem to each leg with both positive and negative stabilizations, we reduce to the case of lens spaces and Seifert fibered spaces each of whose legs has stabilizations of a single sign.
\end{proof}

\begin{figure}
\centering
\begin{tikzpicture}[scale=0.5]
\begin{knot}[
	clip width=5,
	clip radius=2pt,
	ignore endpoint intersections=false,
	flip crossing/.list={2,4,6,7,9,11,13}]
\strand (0,0) to[out=0,in=180]
        (3,0.5) to[out=0,in=180]
        (9,-1) to[out=180,in=0]
        (3,-2.5) to[out=180,in=0]
        (0,-2) to[out=0,in=180]
        (1,-1.5) to[out=180,in=0]
        (0,-1) to[out=0,in=180]
        (1,-0.5) to[out=180,in=0]
        (0,0);

\strand (0,-2.5) to[out=0,in=180]
		(1,-2) to[out=0,in=180]
		(2,-3) to[out=180,in=0]
		(1,-4) to[out=180,in=0]
		(0,-3.5) to[out=0,in=180]
		(0.5,-3) to[out=180,in=0]
		(0,-2.5);
\strand (0,-4) to[out=0,in=180]
		(1,-3.5) to[out=0,in=180]
		(2,-4.5) to[out=180,in=0]
		(1,-5.5) to[out=180,in=0]
		(0,-5) to[out=0,in=180]
		(0.5,-4.5) to[out=180,in=0]
		(0,-4);

\strand (3,-2.7) to[out=0,in=180]
		(4,-2.3) to[out=0,in=180]
		(5,-3.5) to[out=180,in=0]
		(4,-4.7) to[out=180,in=0]
		(3,-4.3) to[out=0,in=180]
		(3.5,-3.9) to[out=180,in=0]
		(3,-3.5) to[out=0,in=180]
		(3.5,-3.1) to[out=180,in=0]
		(3,-2.7);
\strand (3,-4.75) to[out=0,in=180]
		(4,-4.25) to[out=0,in=180]
		(5,-5.25) to[out=180,in=0]
		(4,-6.25) to[out=180,in=0]
		(3,-5.75) to[out=0,in=180]
		(3.5,-5.25) to[out=180,in=0]
		(3,-4.75);

\strand (6,-2) to[out=0,in=180]
		(7,-1.3) to[out=0,in=180]
		(8,-2) to[out=180,in=0]
		(7,-2.7) to[out=180,in=0]
		(6,-2);
\strand (6,-3.5) to[out=0,in=180]
		(7,-2.3) to[out=0,in=180]
		(8,-2.7) to[out=180,in=0]
		(7.5,-3.1) to[out=0,in=180]
		(8,-3.5) to[out=180,in=0]
		(7.5,-3.9) to[out=0,in=180]
		(8,-4.3) to[out=180,in=0]
		(7,-4.7) to[out=180,in=0]
		(6,-3.5);
\strand (6,-5.25) to[out=0,in=180]
		(7,-4.25) to[out=0,in=180]
		(8,-4.75) to[out=180,in=0]
		(7.5,-5.25) to[out=0,in=180]
		(8,-5.75) to[out=180,in=0]
		(7,-6.25) to[out=180,in=0]
		(6,-5.25);
\end{knot}

\draw[->] (0,-7) -- (-2,-9);
\draw[->] (4.5,-7) -- (4.5,-9);
\draw[->] (9,-7) -- (11,-9);

\begin{scope}[xshift=-11cm,yshift=-10cm]
\begin{knot}[
	clip width=5,
	clip radius=2pt,
	ignore endpoint intersections=false,
	flip crossing/.list={2,4,6,7,9,11,13}]
\strand[dashed] (0,0) to[out=0,in=180]
        (3,0.5) to[out=0,in=180]
        (9,-1) to[out=180,in=0]
        (3,-2.5) to[out=180,in=0]
        (0,-2) to[out=0,in=180]
        (1,-1.5) to[out=180,in=0]
        (0,-1) to[out=0,in=180]
        (1,-0.5) to[out=180,in=0]
        (0,0);

\strand (0,-2.5) to[out=0,in=180]
		(1,-2) to[out=0,in=180]
		(2,-3) to[out=180,in=0]
		(1,-4) to[out=180,in=0]
		(0,-3.5) to[out=0,in=180]
		(0.5,-3) to[out=180,in=0]
		(0,-2.5);
\strand (0,-4) to[out=0,in=180]
		(1,-3.5) to[out=0,in=180]
		(2,-4.5) to[out=180,in=0]
		(1,-5.5) to[out=180,in=0]
		(0,-5) to[out=0,in=180]
		(0.5,-4.5) to[out=180,in=0]
		(0,-4);

\strand (3,-2.7) to[out=0,in=180]
		(4,-2.3) to[out=0,in=180]
		(5,-3.5) to[out=180,in=0]
		(4,-4.7) to[out=180,in=0]
		(3,-4.3) to[out=0,in=180]
		(3.5,-3.9) to[out=180,in=0]
		(3,-3.5) to[out=0,in=180]
		(3.5,-3.1) to[out=180,in=0]
		(3,-2.7);
\strand (3,-4.75) to[out=0,in=180]
		(4,-4.25) to[out=0,in=180]
		(5,-5.25) to[out=180,in=0]
		(4,-6.25) to[out=180,in=0]
		(3,-5.75) to[out=0,in=180]
		(3.5,-5.25) to[out=180,in=0]
		(3,-4.75);

\strand (6,-2) to[out=0,in=180]
		(7,-1.3) to[out=0,in=180]
		(8,-2) to[out=180,in=0]
		(7,-2.7) to[out=180,in=0]
		(6,-2);
\strand (6,-3.5) to[out=0,in=180]
		(7,-2.3) to[out=0,in=180]
		(8,-2.7) to[out=180,in=0]
		(7.5,-3.1) to[out=0,in=180]
		(8,-3.5) to[out=180,in=0]
		(7.5,-3.9) to[out=0,in=180]
		(8,-4.3) to[out=180,in=0]
		(7,-4.7) to[out=180,in=0]
		(6,-3.5);
\strand (6,-5.25) to[out=0,in=180]
		(7,-4.25) to[out=0,in=180]
		(8,-4.75) to[out=180,in=0]
		(7.5,-5.25) to[out=0,in=180]
		(8,-5.75) to[out=180,in=0]
		(7,-6.25) to[out=180,in=0]
		(6,-5.25);
\end{knot}
\end{scope}

\begin{scope}[yshift=-10cm]
\begin{knot}[
	clip width=5,
	clip radius=2pt,
	ignore endpoint intersections=false,
	flip crossing/.list={2,4,6,7,9,11,13}]
\strand (0,0) to[out=0,in=180]
        (3,0.5) to[out=0,in=180]
        (9,-1) to[out=180,in=0]
        (3,-2.5) to[out=180,in=0]
        (0,-2) to[out=0,in=180]
        (1,-1.5) to[out=180,in=0]
        (0,-1) to[out=0,in=180]
        (1,-0.5) to[out=180,in=0]
        (0,0);

\strand (0,-2.5) to[out=0,in=180]
		(1,-2) to[out=0,in=180]
		(2,-3) to[out=180,in=0]
		(1,-4) to[out=180,in=0]
		(0,-3.5) to[out=0,in=180]
		(0.5,-3) to[out=180,in=0]
		(0,-2.5);
\strand (0,-4) to[out=0,in=180]
		(1,-3.5) to[out=0,in=180]
		(2,-4.5) to[out=180,in=0]
		(1,-5.5) to[out=180,in=0]
		(0,-5) to[out=0,in=180]
		(0.5,-4.5) to[out=180,in=0]
		(0,-4);

\strand (3,-2.7) to[out=0,in=180]
		(4,-2.3) to[out=0,in=180]
		(5,-3.5) to[out=180,in=0]
		(4,-4.7) to[out=180,in=0]
		(3,-4.3) to[out=0,in=180]
		(3.5,-3.9) to[out=180,in=0]
		(3,-3.5) to[out=0,in=180]
		(3.5,-3.1) to[out=180,in=0]
		(3,-2.7);
\strand (3,-4.75) to[out=0,in=180]
		(4,-4.25) to[out=0,in=180]
		(5,-5.25) to[out=180,in=0]
		(4,-6.25) to[out=180,in=0]
		(3,-5.75) to[out=0,in=180]
		(3.5,-5.25) to[out=180,in=0]
		(3,-4.75);

\strand[dashed] (6,-2) to[out=0,in=180]
		(7,-1.3) to[out=0,in=180]
		(8,-2) to[out=180,in=0]
		(7,-2.7) to[out=180,in=0]
		(6,-2);
\strand (6,-3.5) to[out=0,in=180]
		(7,-2.3) to[out=0,in=180]
		(8,-2.7) to[out=180,in=0]
		(7.5,-3.1) to[out=0,in=180]
		(8,-3.5) to[out=180,in=0]
		(7.5,-3.9) to[out=0,in=180]
		(8,-4.3) to[out=180,in=0]
		(7,-4.7) to[out=180,in=0]
		(6,-3.5);
\strand (6,-5.25) to[out=0,in=180]
		(7,-4.25) to[out=0,in=180]
		(8,-4.75) to[out=180,in=0]
		(7.5,-5.25) to[out=0,in=180]
		(8,-5.75) to[out=180,in=0]
		(7,-6.25) to[out=180,in=0]
		(6,-5.25);
\end{knot}
\end{scope}

\begin{scope}[xshift=11cm,yshift=-10cm]
\begin{knot}[
	clip width=5,
	clip radius=2pt,
	ignore endpoint intersections=false,
	flip crossing/.list={2,4,6,7,9,11,13}]
\strand (0,0) to[out=0,in=180]
        (3,0.5) to[out=0,in=180]
        (9,-1) to[out=180,in=0]
        (3,-2.5) to[out=180,in=0]
        (0,-2) to[out=0,in=180]
        (1,-1.5) to[out=180,in=0]
        (0,-1) to[out=0,in=180]
        (1,-0.5) to[out=180,in=0]
        (0,0);

\strand (0,-2.5) to[out=0,in=180]
		(1,-2) to[out=0,in=180]
		(2,-3) to[out=180,in=0]
		(1,-4) to[out=180,in=0]
		(0,-3.5) to[out=0,in=180]
		(0.5,-3) to[out=180,in=0]
		(0,-2.5);
\strand (0,-4) to[out=0,in=180]
		(1,-3.5) to[out=0,in=180]
		(2,-4.5) to[out=180,in=0]
		(1,-5.5) to[out=180,in=0]
		(0,-5) to[out=0,in=180]
		(0.5,-4.5) to[out=180,in=0]
		(0,-4);

\strand (3,-2.7) to[out=0,in=180]
		(4,-2.3) to[out=0,in=180]
		(5,-3.5) to[out=180,in=0]
		(4,-4.7) to[out=180,in=0]
		(3,-4.3) to[out=0,in=180]
		(3.5,-3.9) to[out=180,in=0]
		(3,-3.5) to[out=0,in=180]
		(3.5,-3.1) to[out=180,in=0]
		(3,-2.7);
\strand (3,-4.75) to[out=0,in=180]
		(4,-4.25) to[out=0,in=180]
		(5,-5.25) to[out=180,in=0]
		(4,-6.25) to[out=180,in=0]
		(3,-5.75) to[out=0,in=180]
		(3.5,-5.25) to[out=180,in=0]
		(3,-4.75);

\strand (6,-2) to[out=0,in=180]
		(7,-1.3) to[out=0,in=180]
		(8,-2) to[out=180,in=0]
		(7,-2.7) to[out=180,in=0]
		(6,-2);
\strand[dashed] (6,-3.5) to[out=0,in=180]
		(7,-2.3) to[out=0,in=180]
		(8,-2.7) to[out=180,in=0]
		(7.5,-3.1) to[out=0,in=180]
		(8,-3.5) to[out=180,in=0]
		(7.5,-3.9) to[out=0,in=180]
		(8,-4.3) to[out=180,in=0]
		(7,-4.7) to[out=180,in=0]
		(6,-3.5);
\strand (6,-5.25) to[out=0,in=180]
		(7,-4.25) to[out=0,in=180]
		(8,-4.75) to[out=180,in=0]
		(7.5,-5.25) to[out=0,in=180]
		(8,-5.75) to[out=180,in=0]
		(7,-6.25) to[out=180,in=0]
		(6,-5.25);
\end{knot}
\end{scope}
\end{tikzpicture}
\caption{In this example, there are three contact manifolds which might result from the JSJ decomposition for symplectic fillings.  Notice that each consists of universally tight lens spaces, and possibly a Seifert fibered space with canonical contact structure.}
\label{fig:reduce-to-canonical}
\end{figure}
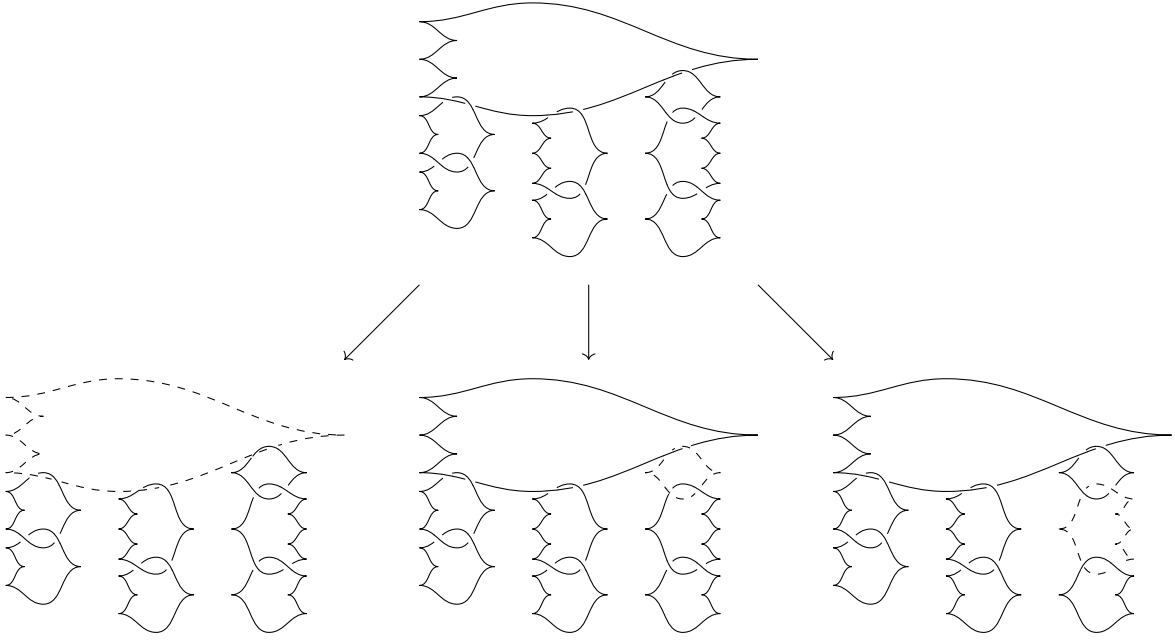

Proposition~\ref{prop:legs-have-single-sign} leads us to consider the case where $(M,\xi)$ is not centrally mixed, and each of its legs has stabilizations of a single sign --- though these signs may not all agree.

\begin{proposition}\label{prop:single-sign}
Let $(M,\xi)$ be as above, with the central knot in Figure \ref{fig:negative-euler} having stabilizations which are all of a single sign, and with each leg having stabilizations of a single sign.  Then every exact symplectic filling of $(M,\xi)$ may be obtained by attaching round symplectic 1-handles to a disjoint union of fillings of lens spaces and a canonical Seifert fibered space.
\end{proposition}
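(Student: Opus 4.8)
The plan is to induct on the number of legs of Figure \ref{fig:negative-euler} whose stabilizations disagree in sign with those of the central knot. If this number is zero, then every stabilization in the diagram has a single sign, so $(M,\xi)$ is a Seifert fibered space with its canonical contact structure and there is nothing to prove: we take $(M',\xi')=(M,\xi)$ with no lens space summands. Otherwise I would show that each mismatched leg produces a mixed torus, peel off a lens space using Theorem \ref{thm:jsj}, and check that the count strictly decreases. Note that the hypothesis that every leg has stabilizations of a single sign rules out the situation of Theorem \ref{thm:menke-knot}, so Theorem \ref{thm:jsj} is genuinely the tool to use here.

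Concretely, suppose after relabeling that the first leg has stabilizations of sign opposite to those of the central knot. I would decompose $(M,\xi)$ as in the proof of Proposition \ref{prop:legs-have-single-sign}, writing $M\cong(\Sigma\times S^1)\cup_{(\varphi_1\cup\cdots\cup\varphi_n)}(V_1\cup\cdots\cup V_n)$ and then factoring $(V_1,\xi|_{V_1})$ into continued-fraction blocks, one for each stabilized knot $K_1^1,\ldots,K_{\ell_1}^1$ of the first leg, arranged so that the block of $K_1^1$ (the stabilized knot of leg $1$ nearest the central knot) is outermost. The central knot contributes one further continued-fraction block, all of whose basic slices carry the sign of its stabilizations, and I would position this block adjacent to the block of $K_1^1$, separated only by the toric annuli coming from any intervening $-2$-framed knots of leg $1$, which carry no basic slices. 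Since the central block and the block of $K_1^1$ are universally tight and of opposite sign, the convex torus $T$ between them is a mixed torus. Exactly as in Section \ref{sec:lens-space-proofs} and in Proposition \ref{prop:legs-have-single-sign}, I would normalize a neighborhood of $T$ so that the dividing slope on $T$ is $\infty$, the far boundary of the negative basic slice has slope $-1$, and the far boundary of the positive basic slice has slope $s$, where $s$ is two more than the number of unstabilized knots of leg $1$ between the central knot and $K_1^1$.

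Applying Theorem \ref{thm:jsj} to an arbitrary strong filling $(W,\omega)$ of $(M,\xi)$ along $T$ then exhibits $(W,\omega)$ as a round symplectic 1-handle attachment to a filling of one of $s$ contact manifolds, each obtained diagrammatically by deleting from Figure \ref{fig:negative-euler} either $K_1^1$, one of the intervening unstabilized knots, or the leg-$1$ knot whose block lies on the far side of the negative basic slice. In every case the resulting contact manifold is a disjoint union of a lens space --- with surgery diagram the sub-chain of leg $1$ below the deleted knot --- and a Seifert fibered space $(M'',\xi'')$ presented as in Figure \ref{fig:negative-euler}, in which the surviving portion of leg $1$ now carries the same sign as the central knot and all other legs are unchanged. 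Hence $(M'',\xi'')$ has strictly fewer mismatched legs, so by the inductive hypothesis its strong fillings are built from fillings of lens spaces and a canonical Seifert fibered space by round symplectic 1-handle attachments; composing with the round handle just attached (and, if desired, applying Theorem \ref{thm:lens-space-fillings} to the lens space summand) yields the claim for $(M,\xi)$.

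The step I expect to be the main obstacle is the identification of the mixed torus $T$: precisely pinning down how the central knot's continued-fraction block sits in the Seifert decomposition relative to the first leg's blocks, given that the central knot is linked to the first knot of every leg, and verifying that the basic slices immediately on either side of $T$ really do have opposite signs rather than the sign data being obscured by the gluing maps $\varphi_i$. Once the local model at $T$ is fixed, the slope normalization --- and hence the exact list of $s$ contact manifolds output by Theorem \ref{thm:jsj} --- is routine, following the same inductive continued-fraction computation as in Section \ref{sec:lens-space-proofs}.
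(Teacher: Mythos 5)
Your overall strategy is the paper's: locate a mixed torus between the continued fraction block of the central knot and the block of the first stabilized knot of a mismatched leg, apply Theorem \ref{thm:jsj}, enumerate the $s$ possible splittings, and iterate over the mismatched legs.  But the step you flag as ``the main obstacle'' is precisely the content of the paper's proof, and your proposal leaves it unfilled.  You cannot simply ``position'' the central knot's block adjacent to the block of $K_1^1$; you must verify that in the decomposition $M\cong(\Sigma\times S^1)\cup_{(\varphi_1\cup\cdots\cup\varphi_n)}(V_1\cup\cdots\cup V_n)$ the torus $T_1=\partial V_1$ actually has, on its $\Sigma\times S^1$ side, a basic slice carrying the sign of the central knot's stabilizations.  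The paper does this concretely: it takes vertical annuli joining the remaining boundary tori (as in the proof of Lemma \ref{lemma:mixed}), forms the neighborhood $N$ of those tori and annuli, and computes that the toric annulus $(\Sigma\times S^1)\setminus N$ is a single continued fraction block consisting of $|e_0+2|$ basic slices --- one for each stabilization of the central knot, hence all of one sign --- with one boundary component equal to $T_1$; an explicit $SL(2,\mathbb{Z})$ normalization then exhibits a basic slice of the central knot's sign with slopes $-1$ and $\infty$ adjacent to $T_1$, while the mismatched leg supplies a basic slice of the opposite sign inside $V_1$.  Without some such computation your sign claim is an assertion, not a proof.

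Separately, your enumeration of the $s$ outcomes is wrong at one end.  The knots that may be deleted are the first stabilized knot of leg $1$, the unstabilized knots between it and the central knot, and the \emph{central knot itself}; there is no ``leg-$1$ knot whose block lies on the far side of the negative basic slice'' among the outcomes.  This matters for your induction: deleting the central knot does not yield a lens space disjoint from a Seifert fibered space with fewer mismatched legs, but rather a disjoint union of a lens space with a \emph{connected sum} of lens spaces, since removing the central vertex disconnects the star-shaped diagram.  That outcome still terminates your induction --- handle it with Theorem \ref{thm:connected-sum} and Theorem \ref{thm:lens-space-fillings} --- but as written your inductive hypothesis does not apply to it, so the case must be treated explicitly.
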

\begin{proof}
We lose no generality by assuming that the stabilizations of the central knot of $(M,\xi)$ are all positive --- as noted above, the central knot must have at least one stabilization.  If the $i$th leg of the Legendrian surgery diagram for $(M,\xi)$ has negative stabilizations, we will identify a mixed torus which allows us to amputate this leg.  Now consider decomposing $(M,\xi)$ as
\[
M \cong (\Sigma\times S^1)\cup_{(\varphi_1\cup\cdots\cup\varphi_n)}(V_1\cup\cdots\cup V_n),
\]
as in the proof of Proposition \ref{prop:legs-have-single-sign}.  Namely, $-\partial(\Sigma\times S^1)=T_1+\cdots+T_n$, where each $T_i$ is a minimal convex torus with dividing curves of slope $\lfloor\frac{q_i}{p_i}\rfloor$.  We claim that there is a positive basic slice adjacent to $T_i$ in $\Sigma\times S^1$.  For ease of notation, let us assume that $i=n$.  Then we have a collection $A_1,\ldots,A_{n-2}$ of vertical annuli in $\Sigma\times S^1$, with $A_i$ connecting $T_i$ to $T_{i+1}$ (c.f. the proof of Lemma \ref{lemma:mixed}).  Each annulus will have parallel, horizontal dividing curves, and we consider the neighborhood
\[
N=N(T_1\cup\cdots\cup T_{n-1}\cup A_1\cup\cdots\cup A_{n-2}),
\]
the boundary of which is $\partial N = T_1\cup\cdots\cup T_{n-1}\cup T$.  The minimal convex torus $T$ has dividing curves of slope $-(n-2)-\Sigma_{i=1}^{n-1}\lfloor\frac{q_i}{p_i}\rfloor$, and thus the toric annulus $(\Sigma\times S^1)\setminus N$ is a continued fraction block with boundary slopes $-(n-2)-\Sigma_{i=1}^{n-1}\lfloor\frac{q_i}{p_i}\rfloor$ and $\lfloor\frac{q_n}{p_n}\rfloor$.  This continued fraction block consists of
\[
\left\lfloor\frac{q_n}{p_n}\right\rfloor - \left(-(n-2)-\sum_{i=1}^{n-1}\left\lfloor\frac{q_n}{p_n}\right\rfloor\right) = (n-2) + \sum_{i=1}^n\left\lfloor\frac{q_n}{p_n}\right\rfloor = |e_0 + 2|
\]
basic slices, each of which is positive, since the stabilizations of the central knot are all positive.  In particular, we have a positive basic slice adjacent to $T_n$ whose opposite slope is $\lfloor\frac{q_n}{p_n}\rfloor-1$, measured in the coordinates of $T_n$.  We may normalize via the map
\[
\left(\begin{matrix}
\lfloor\frac{q_n}{p_n}\rfloor & -1\\
1-2\lfloor\frac{q_n}{p_n}\rfloor & 2
\end{matrix}\right)
\]
to obtain a positive basic slice with slopes $-1$ and $\infty$.  The same holds for any $1\leq i\leq n$.\\

Finally, because the $i$th leg of our Legendrian surgery diagram has a negative stabilization, we identify a negative basic slice in the solid torus $V_i$ which is adjacent to $\partial V_i$.  After gluing via $\varphi_i$, we see that $T_i=\partial V_i$ is a mixed torus.  The opposite slope $s$ of the basic slice in $V_i$ will depend as usual on the number of unstabilized knots (if any) which lie between the central knot and the first stabilized knot of the $i$th leg.  In particular, $s$ will be two more than the number of such knots.  There are then $s$ possible results of applying the JSJ decomposition along the mixed torus $T_i$, and these correspond to deleting either the central knot of our surgery diagram, the first stabilized knot of the $i$th leg, or an intermediate, unstabilized knot.  Deleting the central knot leaves us with a connected sum of lens spaces, while deleting a knot contained in the $i$th leg leaves us with a disjoint union of a lens space and a Seifert fibered space whose $i$th leg has no stabilizations.\\

Clearly the above argument may be applied to each leg with negative stabilizations (still assuming that the central knot is stabilized positively), allowing us to reduce to the case where all stabilizations in our surgery diagram have the same sign.
\end{proof}

\begin{remark}
In the classification of tight contact structures on small Seifert fibered spaces, the cases $e_0=-1$ and $e_0=-2$ are exceptional, as these are the only cases in which our space could possibly have infinitely many tight contact structures.  The case $e_0=-2$ has been studied by Ghiggini \cite{ghiggini2008tight} and Tosun \cite{tosun2020tight}, while the $e_0=-1$ case has been studied by Ghiggini-Lisca-Stipsicz \cite{ghiggini2007tight} and Matkovi{\v{c}} \cite{matkovivc2018classification}.  The existence of fillings of Sifert fibered spaces was studied in \cite{lecuona2011stein}.  As in the case $e_0\leq -3$, we may construct fillable contact manifolds with $e_0=-2$ by putting the link in Figure~\ref{fig:negative-euler} into Legendrian position.  We may then apply Proposition~\ref{prop:legs-have-single-sign} to such a diagram, to ensure that each leg has stabilizations of only one sign.  However, the lack of stabilizations on the central knot prevents us from applying Proposition~\ref{prop:single-sign}. In case $e_0=-1$, still less can be said.  In this case, Lisca-Stipsicz provide in \cite{lisca2007ozsvath} a surgery diagram which can produce all tight contact structures with maximal twisting equal to zero, but, per Etnyre-Honda \cite[Theorem 1.1 \& Lemma 3.3]{etnyre2002tight}, some such tight contact structures are not fillable.  For further analysis of these structures, see \cite{lecuona2011stein} and \cite{matkovivc2018classification}.
\end{remark}

From Propositions \ref{prop:legs-have-single-sign} and \ref{prop:single-sign} we see that the problem of classifying exact symplectic fillings for Seifert fibered spaces as in Figure \ref{fig:negative-euler} is reduced to the same problem for lens spaces, and for canonical Seifert fibered spaces.  Per above results, the lens spaces may be further reduced to universally tight lens spaces, and thus Theorem \ref{thm:negative-euler} is established.  The results of this section provide us with the usual diagrammatic calculus for reducing the classification of fillings problem: see Figure \ref{fig:reduce-to-canonical} for an example.

\subsection{Virtually overtwisted circle bundles over surfaces}\label{subsec:circle-bundle-proofs}
Honda classified the tight contact structures on circle bundles over closed Riemann surfaces in \cite[Part 2]{honda2000classification2}.  We will borrow his notation here, letting $\pi\colon M\to\Sigma$ be an oriented circle bundle over a closed, oriented surface $\Sigma$ with genus $g$.  Once we have fixed a contact structure on $M$, Honda defines the \emph{twisting number} $t(S^1)$ to be the maximum non-positive twisting number among all closed Legendrian curves in $M$ isotopic to the $S^1$-fiber, relative to the fibration framing.  The twisting number is taken to be zero if $M$ admits a fiber-isotopic Legendrian curve with positive twisting number.  We denote by $e$ the Euler number of the bundle $\pi\colon M\to\Sigma$.\\

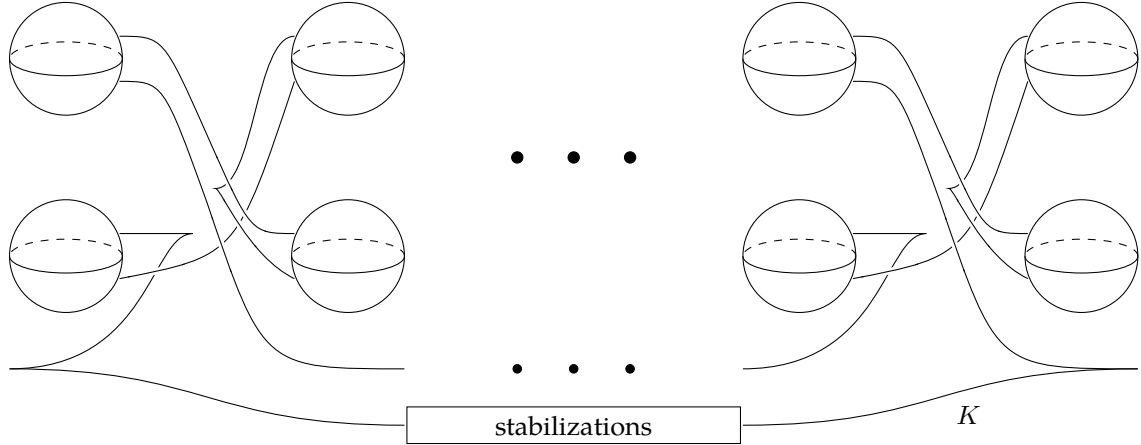
\begin{figure}
\centering
\begin{tikzpicture}[scale=0.75]
\draw (-9,3.5) circle (1);
\draw (-10,3.5) arc (180:360:1 and 0.3);
\draw[dashed] (-8,3.5) arc (0:180:1 and 0.3);
\draw (-4,3.5) circle (1);
\draw (-5,3.5) arc (180:360:1 and 0.3);
\draw[dashed] (-3,3.5) arc (0:180:1 and 0.3);
\draw (4,3.5) circle (1);
\draw (3,3.5) arc (180:360:1 and 0.3);
\draw[dashed] (5,3.5) arc (0:180:1 and 0.3);
\draw (9,3.5) circle (1);
\draw (8,3.5) arc (180:360:1 and 0.3);
\draw[dashed] (10,3.5) arc (0:180:1 and 0.3);

\draw (-9,0) circle (1);
\draw (-10,0) arc (180:360:1 and 0.3);
\draw[dashed] (-8,0) arc (0:180:1 and 0.3);
\draw (-4,0) circle (1);
\draw (-5,0) arc (180:360:1 and 0.3);
\draw[dashed] (-3,0) arc (0:180:1 and 0.3);
\draw (4,0) circle (1);
\draw (3,0) arc (180:360:1 and 0.3);
\draw[dashed] (5,0) arc (0:180:1 and 0.3);
\draw (9,0) circle (1);
\draw (8,0) arc (180:360:1 and 0.3);
\draw[dashed] (10,0) arc (0:180:1 and 0.3);

\begin{knot}[ clip width=2.5]
\strand (-8.05,3.9) .. controls (-7.275,3.9) .. (-6.5,2.15) .. controls (-5.725,0.4) .. (-4.95,0.4);
\strand (-8.05,3.1) .. controls (-7.275,3.1) .. (-6.5,0.95) .. controls (-5.5,-2) .. (-3,-2);
\strand (-4.95,-0.4) .. controls (-5.775,0) and (-6.25,1.2) .. (-6.35,1.2) .. controls (-5.65,1.2) and (-5.65,3.9) .. (-4.95,3.9);
\strand (-8.05,-0.4) .. controls (-5.98,0) .. (-4.95,3.1);
\strand (-8.05,0.4) to (-6.75,0.4) .. controls (-7.5,0.4) and(-7.5,-2) .. (-10,-2);
\strand (-10,-2) .. controls (-6.5,-2) and (-6.5,-3) .. (-3,-3);
\strand (10,-2) .. controls (6.5,-2) and (6.5,-3) .. (3,-3);
\end{knot}

\begin{scope}[xshift=13cm]
\begin{knot}[ clip width=2.5]
\strand (-8.05,3.9) .. controls (-7.275,3.9) .. (-6.5,2.15) .. controls (-5.725,0.4) .. (-4.95,0.4);
\strand (-8.05,3.1) .. controls (-7.275,3.1) .. (-6.5,0.95) .. controls (-5.5,-2) .. (-3,-2);
\strand (-4.95,-0.4) .. controls (-5.775,0) and (-6.25,1.2) .. (-6.35,1.2) .. controls (-5.65,1.2) and (-5.65,3.9) .. (-4.95,3.9);
\strand (-8.05,-0.4) .. controls (-5.98,0) .. (-4.95,3.1);
\strand (-8.05,0.4) to (-6.75,0.4) .. controls (-7.5,0.4) and(-7.5,-2) .. (-10,-2);
\end{knot}
\end{scope}

\node[draw,text width=4.2cm,align=center] at (0,-3) {stabilizations};
\filldraw[color=black, fill=black](0,1.75) circle (0.1);
\filldraw[color=black, fill=black](-1,1.75) circle (0.1);
\filldraw[color=black, fill=black](1,1.75) circle (0.1);
\filldraw[color=black, fill=black](0,-2) circle (0.075);
\filldraw[color=black, fill=black](-1,-2) circle (0.075);
\filldraw[color=black, fill=black](1,-2) circle (0.075);
\node[below] at (7,-2.5) {$K$};
\end{tikzpicture}
\caption{Stein handlebody diagrams for filling the tight contact structures on a circle bundle $\pi\colon M\to\Sigma$ with $t(S^1)=-1$.  The diagram has $2g$ 1-handles, and the knot $K$ has $2g-2-e$ stabilizations in the marked region.}
\label{fig:circle-bundle}
\end{figure}

If $2g-2>e$, Honda shows that there are $(2g-1)-e$ tight contact structures on $M$ with $t(S^1)=-1$; of these, exactly two are universally tight.  There are no virtually overtwisted contact structures on $M$ with $t(S^1)<-1$.  There are some exceptional cases of virtually overtwisted contact structures on circle bundles with $t(S^1)=0$, but these are not subject to Proposition \ref{prop:circle-bundles}.  Instead, these exceptional cases are treated by Lisca-Stipsicz \cite{lisca2004tight}.\\

Proposition \ref{prop:circle-bundles} follows immediately from Honda's description of these virtually overtwisted contact structures, as well as Theorem 1.3 of \cite{menke2018jsj}.  Namely, Honda constructs each of the $(2g-1)-e$ tight contact structures on $M$ by performing Legendrian surgery on a knot $K$ in $(\#^{2g}(S^1\times S^2),\xi_{\std})$ which has been stabilized $(2g-2)-e$ times.  Here $\xi_{\std}$ is the unique-up-to-isotopy tight contact structure on $\#^{2g}(S^1\times S^2)$.  The universally tight structures on $M$ are precisely those for which all of these stabilizations have the same sign, while each virtually overtwisted contact structure $\xi_{vot}$ results from surgery along a knot which has been stabilized both positively and negatively.  According to \cite[Theorem 1.3]{menke2018jsj}, every exact filling of $(M,\xi_{vot})$ is therefore obtained from such a filling of $(\#^{2g}(S^1\times S^2),\xi_{\std})$ by attaching a Weinstein 2-handle along $K$.  But $(\#^{2g}(S^1\times S^2),\xi_{\std})$ has a unique exact filling up to symplectomorphism, and thus the same is true of $(M,\xi_{vot})$.  This proves Proposition \ref{prop:circle-bundles}.\\

Finally, we prove Corollary~\ref{cor:circle-bundles}.

\begin{proof}[Proof of Corollary~\ref{cor:circle-bundles}]
Following the discussion in Section~\ref{subsec:circle-bundles}, we only need to verify the corollary in case $M\to \Sigma$ is a circle bundle over a torus.  In this case, we may identify our circle bundle with a parabolic torus bundle over $S^1$, and Honda's classification of tight contact structures on torus bundles \cite{honda2000classification2} tells us that a virtually overtwisted contact structure exists if and only if the Euler number $e$ of our circle bundle satisfies $|e|\geq 2$.  From \cite[Theorem 1.1(B)]{christian2021symplectic} we then know that our circle bundle admits a unique exact filling if $e\leq -2$, and admits no exact symplectic filling if $e\geq 2$.  So Corollary~\ref{cor:circle-bundles} will follow once we show that $t(S^1)=0$ in case $e\geq 2$ and $t(S^1)<0$ in case $e\leq -2$.\\

If $e\geq 2$, we simply note that, up to isotopy, the fiber of our circle bundle is a Legendrian curve $\gamma$ with $t(\gamma)=e$.  By the definition of $t(S^1)$ we have $t(S^1)=0$.\\

Next, we consider the case $e\leq -2$ and suppose that $\gamma$ is a fiber-isotopic Legendrian with $t(\gamma)=0$.  We will use a trick of Kanda \cite{kanda1997classification} to show that such a Legendrian $\gamma$ cannot exist, adapting the proof of \cite[Lemma 3.3]{honda2000classification2}.  By identifying $T^2$ as the quotient of a square $P$, we may pull back the circle bundle $M\to T^2$ to a circle bundle on $P$, giving us the solid torus $S^1\times P$.  We also pull back the tight contact structure on $M$ to a contact structure on $S^1\times P$.  In fact, by taking $P\to T^2$ to be a many-to-one quotient map --- i.e., by tiling together many copies of $P$ --- we may ensure that $\gamma$ pulls back to a closed, fiber-isotopic Legendrian curve in $S^1\times P$.  However, $S^1\times P$ is, after edge-rounding, isomorphic to a standard neighborhood of a Legendrian curve with twisting number $e\leq -2$.  But $S^1\times P$ contains a standard neighborhood of $\gamma$, which is a solid torus with slope $\infty$.  According to the classification of tight contact structures on $S^1\times D^2$ \cite{honda2000classification}, this is a contradiction.  We conclude that no such $\gamma$ exists, and thus $t(S^1)<0$.
\end{proof}

\printbibliography
\end{document}